\documentclass{siamart1116}

\usepackage{lipsum}
\usepackage{amsfonts}
\usepackage{graphicx}
\usepackage{epstopdf}
\usepackage{algorithmic}

\usepackage{amsopn}
\usepackage{amssymb}
\usepackage{amsmath}
\usepackage{graphicx}
\usepackage{caption}
\usepackage{subcaption}
\usepackage{caption}
\usepackage{float}
\usepackage{subcaption}
\usepackage{dcolumn}
\usepackage{multirow}
\usepackage{enumitem}
\usepackage{amsfonts}

\usepackage[sort]{cite}

\usepackage{color}

\ifpdf
\DeclareGraphicsExtensions{.eps,.pdf,.png,.jpg}
\else
\DeclareGraphicsExtensions{.eps}
\fi

\numberwithin{theorem}{section}

\newcommand{\TheTitle}{DOMAIN DECOMPOSITION AND MULTISCALE MORTAR MIXED FINITE ELEMENT
METHODS FOR LINEAR ELASTICITY WITH WEAK STRESS SYMMETRY}
\newcommand{\TheHeader}{Domain decomposition and mortar mixed methods for elasticity}
\newcommand{\TheAuthors}{Eldar Khattatov and Ivan Yotov}

\headers{\TheHeader}{\TheAuthors}

\title{{\TheTitle}
\thanks{\today.
\funding{NSF grant DMS 1418947 and DOE grant DE-FG02-04ER25618.}
}
}

\author{
Eldar Khattatov\thanks{Department of Mathematics, University
of Pittsburgh, Pittsburgh, PA 15260, USA
(\email{elk58@pitt.edu}, \email{yotov@math.pitt.edu}).}
\and
Ivan Yotov\footnotemark[2]
}

\allowdisplaybreaks

\newcommand{\inp}[2][]{\left(#1, #2\right)}
\newcommand{\gnp}[2][]{\left\langle#1, #2\right\rangle}
\newcommand{\dvr}[1]{\nabla \cdot #1}

\def\skew{\operatorname{skew}}

\newtheorem{remark}{Remark}[section]

\def\meas{\operatorname{meas\,}}
\def\tr{\operatorname{tr\,}}

\def\skew{\operatorname{Skew}}
\def\curl{\operatorname{curl}}
\def\dvr{\operatorname{div}}

\def\s{\sigma}
\def\t{\tau}

\def\g{\gamma}
\def\l{\lambda}
\def\m{\mu}
\def\e{\epsilon}

\newcommand{\sh}[1]{\s_{h,#1}}
\newcommand{\uh}[1]{u_{h,#1}}
\newcommand{\gh}[1]{\g_{h,#1}}

\def\ub{\bar{u}}
\def\gb{\bar{\g}}

\def\ss{\s^{*}}
\def\us{u^{*}}
\def\gs{\g^{*}}

\def\Pit{\tilde{\Pi}}

\def\X{\mathbb{X}}
\def\W{\mathbb{W}}

\def\R{\mathbb{R}}
\def\RM{\mathbb{RM}}
\def\M{\mathbb{M}}
\def\N{\mathbb{N}}
\def\S{\mathbb{S}}

\def\P{\mathcal{P}}
\def\Q{\mathcal{Q}}
\def\Rc{\mathcal{R}}

\def\BDM{\mathcal{BDM}}
\def\T{\mathcal{T}}
\def\IHc{\mathcal{I}_H^c}


\def\O{\Omega}
\def\dO{\partial\Omega}
\def\G{\Gamma}
\def\Rd{\mathbb{R}^d}
\def\L2{L^2}
\def\H1{H^1}

\def\Th{\mathcal{T}}

\numberwithin{equation}{section}

\ifpdf
\hypersetup{
pdftitle={\TheTitle},
pdfauthor={\TheAuthors}
}
\fi



\begin{document}

\maketitle

\begin{abstract}
Two non-overlapping domain decomposition methods are presented for the
mixed finite element formulation of linear elasticity with weakly
enforced stress symmetry. The methods utilize either displacement or
normal stress Lagrange multiplier to impose interface continuity of
normal stress or displacement, respectively. By eliminating the
interior subdomain variables, the global problem is reduced to an
interface problem, which is then solved by an iterative procedure.
The condition number of the resulting algebraic interface problem is
analyzed for both methods. A multiscale mortar mixed finite element
method for the problem of interest on non-matching multiblock grids is
also studied. It uses a coarse scale mortar finite element space on
the non-matching interfaces to approximate the trace of the
displacement and impose weakly the continuity of normal stress. A
priori error analysis is performed. It is shown that, with appropriate
choice of the mortar space, optimal convergence on the fine scale is
obtained for the stress, displacement, and rotation, as well as some
superconvergence for the displacement. Computational results are
presented in confirmation of the theory of all proposed methods.
\end{abstract}

\begin{keywords}
Domain decomposition, mixed finite elements, mortar finite elements, multiscale methods, 
linear elasticity
\end{keywords}

\begin{AMS}
65N30, 65N55, 65N12, 74G15
\end{AMS}

\section{Introduction}

Mixed finite element (MFE) methods for elasticity are important
computational tools due to their local momentum conservation, robust
approximation of the stress, and non-locking behavior for almost
incompressible materials. In this paper, we focus on MFE methods with
weakly imposed stress symmetry
\cite{Amara-Thomas,PEERS,stenberg1988family,arnold2007mixed,boffi2009reduced,cockburn2010new,gopalakrishnan2012second,ArnAwaQiu,Awanou-rect-weak},
since they allow for spaces with fewer degrees of freedom, as well as
reduction to efficient finite volume schemes for the displacement
\cite{MSMFEM-1,MSMFEM-2}. We note that the developments in this paper also
apply to MFE methods for elasticity with strong stress symmetry.

In many physical applications, obtaining the desired resolution may
result in a very large algebraic system. Therefore a critical
component for the applicability of MFE methods for elasticity is the
development of efficient techniques for the solution of these
algebraic systems. Domain decomposition methods
\cite{Toselli-Widlund,QV} provide one such approach. They adopt the
"divide and conquer" strategy and split the computational domain into
multiple non-overlapping subdomains. Then, solving the local problems
of lower complexity with an appropriate choice of interface conditions
leads to recovering the global solution. This approach naturally leads
to designing parallel algorithms, and also allows for the reuse of
existing codes for solving the local subdomain problems.
Non-overlapping domain decomposition methods for non-mixed
displacement-based elasticity formulations have been studied
extensively
\cite{Klawonn-dd-elast,Kim-elast-BDDC,Kim-elast-FETI,Fritz-mortar-elast,GPWW-2009,Hauret-LeTallec},
see also \cite{Goldfeld-dd-elast-mixed,Pavarino-dd-elast-mixed} for
displacement-pressure mixed formulations. To the best of our
knowledge, non-overlapping domain decomposition methods for
stress-displacement mixed elasticity formulations have not been
studied.

In this paper, we develop two non-overlapping domain decomposition
methods for the mixed finite element discretization of linear
elasticity with weakly enforced stress symmetry. The first method uses
a displacement Lagrange multiplier to impose interface continuity of
the normal stress. The second method uses a normal stress Lagrange
multiplier to impose interface continuity of the displacement. These
methods can be thought of as elasticity analogs of the methods
introduced in \cite{GW} for scalar second order elliptic problems, see
also \cite{cowsar1995balancing}. In both methods, the global system is reduced to an
interface problem by eliminating the interior subdomain variables.  We
show that the interface operator is symmetric and positive definite,
so the interface problem can be solved by the conjugate gradient
method. Each iteration requires solving Dirichlet or Neumann subdomain
problems.  The condition number of the resulting algebraic interface
problem is analyzed for both methods, showing that it is
$O(h^{-1})$. We note that in the second method the Neumann subdomain
problems can be singular. We deal with floating subdomains by
following the approach from the FETI methods
\cite{farhat1991method,Toselli-Widlund}, solving a coarse space problem to ensure
that the subdomain problems are solvable.

We also develop a multiscale mortar mixed finite element method for
the domain decomposition formulation of linear elasticity with
non-matching grids. We note that domains with complex geometries can be
represented by unions of subdomains with simpler shapes that are
meshed independently, resulting in non-matching grids across the
interfaces. The continuity conditions are imposed using mortar finite
elements, see
e.g. \cite{arbogast2000mixed,Fritz-mortar-elast,Kim-elast-FETI,Kim-elast-BDDC,GPWW-2009,pencheva2003balancing,Hauret-LeTallec}. Here
we focus on the first formulation, using a mortar finite element space
on the non-matching interfaces to approximate the trace of the
displacement and impose weakly the continuity of normal stress. We
allow for the mortar space to be on a coarse scale $H$, resulting in a
multiscale approximation, see
e.g. \cite{PWY,APWY,ganis2009implementation}. A priori error analysis
is performed. It is shown that, with appropriate choice of the mortar
space, optimal convergence on the fine scale is obtained for the
stress, displacement, and rotation, as well as some superconvergence
for the displacement.

The rest of the paper is organized as follows. The problem of
interest, its MFE approximation, and the two domain decomposition
methods are formulated in Section 2. The analysis of the resulting
interface problems is presented in Section 3. The multiscale mortar
MFE element method is developed and analyzed in Section 4.  A
multiscale stress basis implementation for the interface problem is
also given in this section. The paper concludes with computational
results in Section 5, which confirm the theoretical results on the
condition number of the domain decomposition methods and the
convergence of the solution of the multiscale mortar MFE element
method.

\section{Formulation of the methods}
\subsection{Model problem}
Let $\O \subset \mathbb{R}^d$, $d=2,3$ be a simply connected bounded
polygonal domain occupied by a linearly elastic body. Let $\M$, $\S$, and $\N$
be the spaces of $d\times d$ matrices, symmetric matrices, and
skew-symmetric matrices over the field $\mathbb{R}$, respectively. The
material properties are described at each point $x \in \O $ by a
compliance tensor $A = A(x)$, which is a self-adjoint, bounded, and
uniformly positive definite linear operator acting from $\S$ to
$\S$. We assume that $A$ can be extended to an operator from $\M$ to
$\M$ with the same properties. In particular, in the case of
homogeneous and isotropic body,
\begin{equation}\label{A-defn}
A\sigma = \frac{1}{2\mu} \left( \sigma - \frac{\lambda}{2\mu +
d\lambda}\operatorname{tr}(\sigma)I \right),
\end{equation}
where $I$ is the $d \times d$ identity matrix and $\mu > 0, \lambda
\ge 0$ are the Lam\'{e} coefficients.

Throughout the paper the divergence operator is the usual divergence
for vector fields, which produces a vector field when applied to a matrix
field by taking the divergence of each row. We will also use the curl
operator which is the usual curl when applied to vector fields in
three dimensions, and defined as $\curl{\phi} = (\partial_2 \phi,
-\partial_1 \phi)^T$ for a scalar function $\phi$ in two dimensions. For
a vector field in two dimensions or a matrix field in three
dimensions, the curl operator produces a matrix field in two or three
dimensions, respectively, by acting row-wise.

Given a vector field $f$ on $\Omega$ representing body
forces, the equations of static elasticity in Hellinger-Reissner form
determine the stress $\sigma$ and the displacement $u$ satisfying the
following constitutive and equilibrium equations respectively,
together with appropriate boundary conditions:
\begin{align}
& A\s = \epsilon(u), \quad \dvr \s = f \,\,\text{ in } \Omega, \label{pde1}\\
& u = g_D \,\,\text{ on } \Gamma_D, \quad
\s\,n = 0 \,\,\text{ on } \Gamma_N, \label{pde2}
\end{align}
where $\epsilon(u) = \frac12(\nabla u + (\nabla u)^T)$ and $n$ is the
outward unit normal vector field on $\partial \O = \G_D \cup \G_N$.
For simplicity we assume that $\meas(\Gamma_D) > 0$, in which case
the problem \eqref{pde1}--\eqref{pde2} has a unique solution.

We will make use of the following standard notations. For a set $G
\subset \Rd$, the $\L2(G)$ inner product and norm are denoted by
$(\cdot,\cdot)_G$ and $\|\cdot\|_G$ respectively, for scalar, vector
and tensor valued functions. For a section of a subdomain boundary $S$
we write $\langle \cdot, \cdot \rangle_{S}$ and $\|\cdot\|_{S}$ for
the $\L2(S)$ inner product (or duality pairing) and norm,
respectively. We omit subscript $G$ if $G = \O$ and $S$ if $S =
\G$. We also denote by $C$ a generic positive constant independent of
the discretization parameters.  We note that, using \eqref{A-defn}, we
have
$$
\inp[A\sigma]{\tau}
= \frac{1}{2\mu}\inp[\sigma]{\tau} - \frac{\lambda}{2\mu(2\lambda + d\mu)}\inp[\tr(\sigma)]{\tr(\tau)},
$$
implying
\begin{equation}
\frac{1}{2\mu+d\lambda}\|\sigma\|^2 \le \inp[A\sigma]{\sigma} \le \frac{1}{2\mu}\|\sigma\|^2. \label{coer-cont}
\end{equation}

We consider the mixed variational formulation for
\eqref{pde1}--\eqref{pde2} with weakly imposed stress symmetry.
Introducing a rotation Lagrange multiplier $\gamma \in \N$ to penalize
the asymmetry of the stress tensor, we obtain: find $(\sigma, u, \g)
\in \X \times V \times \W$ such that
\begin{align}
& \inp[A\s]{\tau} + \inp[u]{\dvr{\tau}} + \inp[\g]{\tau} 
= \gnp[g_D]{\tau\,n}_{\G_D},
&\forall \tau &\in \X, \label{weak1}\\
& \inp[\dvr \sigma]{v} = \inp[f]{v}, &\forall v &\in V, \\
& \inp[\sigma]{\xi} = 0, &\forall \xi &\in \W, \label{weak2}
\end{align}
where
$$ \X = \big\{ \tau\in H(\dvr;\O,\M) : \tau\,n = 0 \text{ on } \G_N  \big\},
\quad V = L^2(\O, \R^d), \quad \W = L^2(\O, \N), $$
with norms
$$ \|\tau\|_{\X} = \left( \|\tau\|^2 + \|\dvr\tau\|^2 \right)^{1/2}, \quad \|v\|_{V} = \|v\|, \quad \|\xi\|_\W = \|\xi\|. $$
It is known \cite{arnold2007mixed} that \eqref{weak1}--\eqref{weak2}
has a unique solution.

\subsection{MFE approximation}

In the first part of the paper we consider a global conforming
shape regular and
quasi-uniform finite element partition $\Th_{h}$ of $\O$. We assume
that $\Th_{h}$ consists of simplices or rectangular elements, but note
that the proposed methods can be extended to other types of elements
for which stable elasticity MFE spaces have been developed, e.g., the
quadrilateral elements in \cite{ArnAwaQiu}. Let
$$
\X_{h}\times V_{h}\times \W_{h} \subset \X \times V \times \W
$$
be any stable triple of spaces for linear elasticity with weakly imposed
stress symmetry, such as the Amara-Thomas \cite{Amara-Thomas}, PEERS \cite{PEERS},
Stenberg \cite{stenberg1988family}, Arnold-Falk-Winther
\cite{arnold2007mixed,ArnAwaQiu,Awanou-rect-weak}, or
Cockburn-Gopalakrishnan-Guzman \cite{cockburn2010new,gopalakrishnan2012second}
families of elements. For all spaces $\dvr \X_{h} = V_{h}$
and there exists a projection operator $\Pi: H^1(\O,\M) \to \X_{h}$,
such that for any $\tau\in H^1(\O,\M)$,
\begin{align}
& \inp[\dvr{(\Pi\, \tau - \tau)}]{v}_{\O} = 0, \quad \forall \, v \in V_{h}, \label{mixed-proj-prop1}\\
& \gnp[(\Pi\, \tau - \tau)\,n]{\chi\,n}_{\dO} = 0, \quad \forall \, \chi \in \X_{h}. \label{mixed-proj-prop2}
\end{align}
The MFE approximation of \eqref{weak1}--\eqref{weak2}
is: find $(\sigma_h, u_h, \g_h) \in \X_h \times V_h \times \W_h$ such
that
\begin{align}
& \inp[A\s_h]{\tau} + \inp[u_h]{\dvr{\tau}} + \inp[\g_h]{\tau} 
= \gnp[g_D]{\tau\,n}_{\G_D},
&\forall \tau &\in \X_h, \label{discr1}\\
& \inp[\dvr \sigma_h]{v} = \inp[f]{v}, &\forall v &\in V_h, \\
& \inp[\sigma_h]{\xi} = 0, &\forall \xi &\in \W. \label{discr2}
\end{align}
The well-posedness of \eqref{discr1}--\eqref{discr2} has been shown in
the above-mentioned references. It was also shown in
\cite{arnold2007mixed,cockburn2010new,gopalakrishnan2012second} that the
following error estimate holds:
\begin{equation}\label{error-est}
\|\s - \s_h\| + \|\P_h u - u_h\| + \|\g - \g_h\| \le
C(\|\s - \Pi\s\| + \|\g - \Rc_h\g\|),
\end{equation}
where $\P_h$ is the $L^2(\O)$-projection onto $V_{h}$ and $\Rc_h$ is the
$L^2(\O)$-projection onto $\W_h$. Later we will also use the restrictions of the
global projections on a subdomain $\O_i$, denoted as $\Pi_i$, $\P_{h,i}$, and
$\Rc_{h,i}$.

\subsection{Domain decomposition formulations}

Let $\O = \cup_{i=1}^n \Omega_i$ be a union of nonoverlapping shape
regular polygonal subdomains. Let $\G_{i,j} = \dO_i \cap \dO_j,\, \G =
\cup_{i,j=1}^n \G_{i,j},$ and $\G_i = \dO_i \cap \G = \dO_i\setminus
\dO$ denote the interior subdomain interfaces. Denote the restrictions of
$\X_h$, $V_h$, and $\W_h$ to $\O_i$ by $\X_{h,i}$, $V_{h,i}$, and $\W_{h,i}$,
respectively. Let $\Th_{h,i,j}$ be a finite element partition
of $\Gamma_{i,j}$ obtained from the trace of $\Th_h$ and let
$\Lambda_{h,i,j} = \X_h\,n$ be the Lagrange multiplier space on $\Th_{h,i,j}$.
Let $\Lambda_h = \bigoplus_{1 \le i,j \le n} \Lambda_{h,i,j}$. We now present
two domain decomposition formulations. The first one uses a displacement
Lagrange multiplier to impose weakly continuity of normal stress.

\medskip
\noindent
\textbf{Method 1:} For $1 \le i \le n$, find
$(\s_{h,i}, u_{h,i}, \g_{h,i},\l_h) \in \X_{h,i}\times V_{h,i}\times \W_{h,i}\times\Lambda_h$ such that
\begin{align}
& \inp[A\sh{i}]{\tau}_{\O_i} + \inp[\uh{i}]{\dvr{\tau}}_{\O_i} 
+ \inp[\gh{i}]{\tau}_{\O_i} && \nonumber \\        
& \qquad\qquad\qquad  = \gnp[\l_h]{\tau\, n_i}_{\G_i} 
+ \gnp[g_D]{\tau\,n_i}_{\partial \O_i \cap \G_D},
&&\forall \tau \in \X_{h,i}, \label{dd1-1}\\
& \inp[\dvr \sh{i}]{v}_{\O_i} = \inp[f]{v}_{\O_i}, &&\forall v \in V_{h,i}, \label{dd1-2} \\
& \inp[\sh{i}]{\xi}_{\O_i} = 0, &&\forall \xi \in \W_{h,i}, \label{dd1-3} \\
& \sum_{i=1}^n \gnp[\sh{i}\,n_i]{\mu}_{\G_i} = 0, &&
\forall \mu \in \Lambda_h, \label{dd1-4}
\end{align}
where $n_i$ is the outward unit normal vector field on $\partial \O_i$. We note
that the subdomain problems in the above method are of Dirichlet type.

The second method uses a normal stress Lagrange multiplier to impose weakly
continuity of displacement. Let
$\X_{h,i}^0 = \{ \t\in\X_{h,i} : \t\,n=0 \mbox{ on } \G \}$ and let $\X_h^\G$
be the complementary subspace:
$$
\X_h = \bigoplus \X_{h,1}^0 \cdots \bigoplus \X_{h,n}^0 \bigoplus \X_h^\G.
$$

\medskip
\noindent
\textbf{Method 2: }
For $1 \le i \le n$, find
$(\s_{h,i}, u_{h,i}, \g_{h,i})
\in \X_{h,i}\times V_{h,i}\times \W_{h,i}$ such that
\begin{align}
& \inp[A\sh{i}]{\tau}_{\O_i} + \inp[\uh{i}]{\dvr{\tau}}_{\O_i} + \inp[\gh{i}]{\tau}_{\O_i} = \gnp[g_D]{\tau\,n_i}_{\partial \O_i \cap \G_D}, &&\forall \tau \in \X_{h,i}^0, \label{dd2-1}\\
& \inp[\dvr \sh{i}]{v}_{\O_i} = \inp[f]{v}_{\O_i}, &&\forall v \in V_{h,i}, \label{dd2-2}\\
& \inp[\sh{i}]{\xi}_{\O_i} = 0, &&\forall \xi \in \W_{h,i}, \label{dd2-3}\\
& \sum_{i=1}^n \sh{i}\, n_i = 0 \quad \mbox{on } \G, \label{dd2-4} \\
& \sum_{i=1}^n \left[ \inp[A\sh{i}]{\tau}_{\O_i} + \inp[\uh{i}]{\dvr{\tau}}_{\O_i} + \inp[\gh{i}]{\tau}_{\O_i} \right] = 0, &&\forall \tau \in \X_{h}^\G.\label{dd2-5}
\end{align}
We note that \eqref{dd2-5} imposes weakly continuity of displacement on the
interface, since taking $\tau \in \X_{h}^\G$ in \eqref{dd2-1}
and summing gives
$$
0 = \sum_{i=1}^n \left[ \inp[A\sh{i}]{\tau}_{\O_i} + \inp[\uh{i}]{\dvr{\tau}}_{\O_i} + \inp[\gh{i}]{\tau}_{\O_i} \right] =
\sum_{i=1}^n \gnp[u_{h,i}]{\t\,n_i}_{\G} \quad \forall \t\in \X_{h}^\G.
$$
It is easy to see that both \eqref{dd1-1}--\eqref{dd1-4}
and \eqref{dd2-1}--\eqref{dd2-5} are equivalent to the global formulation
\eqref{discr1}--\eqref{discr2} with
$(\s_{h}, u_{h}, \g_{h})|_{\O_i} = (\s_{h,i}, u_{h,i}, \g_{h,i})$. In Method 1,
$\l_h$ approximates $u|_{\G}$.

\section{Reduction to an interface problem and 
condition number analysis}
\subsection{Method 1} \label{sec:method1}
To reduce \eqref{dd1-1}--\eqref{dd1-4} to an interface problem for $\l_h$,
we decompose the solution as
\begin{align}
\sh{i} =\ss_{h,i}(\lambda_h) + \bar{\sigma}_{h,i}, && u_{h,i} = \us_{h,i}(\lambda_h) + \ub_{h,i}, && \g_{h,i} = \gs_{h,i}(\lambda_h) + \gb_{h,i}, \label{defs1}
\end{align}
where, for $\lambda_h \in \Lambda_h$, $\left(\sigma^*_i(\lambda_h),
u_i^*(\lambda_h), \g^*_i(\lambda_h)\right) \in \X_{h,i} \times V_{h,i} \times
W_{h,i},\, 1\le i\le n,$ solve
\begin{align}
& \inp[A\sh{i}^*(\lambda_h)]{\tau}_{\O_i} 
+ \inp[\uh{i}^*(\lambda_h)]{\dvr{\tau}}_{\O_i} + \inp[\gh{i}^*(\lambda_h)]{\tau}_{\O_i} 
\nonumber \\
& \qquad\qquad\qquad\qquad
= \gnp[\lambda_h]{\tau\, n_i}_{\G_i}, &\forall \tau & \in \X_{h,i}, \label{star-eqn1}\\
& \inp[\dvr \sh{i}^*(\lambda_h)]{v}_{\O_i} = 0, &\forall v & \in V_{h,i}, \label{star-eqn2}\\
& \inp[\sh{i}^*(\lambda_h)]{\xi}_{\O_i} = 0, &\forall \xi & \in \W_{h,i},
\label{star-eqn3}
\end{align}
and $(\bar{\sigma}_{h,i}, \ub_{h,i}, \gb_{h,i}) \in \X_{h,i}\times V_{h,i}\times \W_{h,i}$
solve
\begin{align}
&\inp[A\bar{\sigma}_{h,i}]{\tau}_{\O_i} + \inp[\ub_{h,i}]{\dvr{\tau}}_{\O_i}
+ \inp[\gb_{h,i}]{\tau}_{\O_i} = \gnp[g_D]{\tau\,n_i}_{(\dO_i\cap\G_D)},
&&\forall \tau \in \X_{h,i}, \label{bar1-1}\\
&\inp[\dvr \bar{\sigma}_{h,i} ]{v}_{\O_i} = \inp[f]{v}_{\O_i},
&&\forall v_{i} \in V_{h,i}, \\
&\inp[\bar{\sigma}_{h,i}]{\xi}_{\O_i} = 0, &&\forall \xi \in \W_{h,i}. \label{bar1-3}
\end{align}
Define the bilinear forms $a_i: \Lambda_{h} \times \Lambda_{h} \to
\mathbb{R}$, $1 \le i \le n$ and $a : \Lambda_h\times \Lambda_h \to
\mathbb{R}$ and the linear functional $g : \Lambda_h \to \mathbb{R}$ by
\begin{align}
&a_i(\lambda_h, \mu) = -\gnp[\sh{i}^*(\lambda_h)\,n_i]{\mu}_{\G_i},
\quad a(\lambda_h,\mu) = \sum_{i=1}^n a_i(\lambda_h, \mu), \label{def-int-bf}\\
&g(\mu) = \sum_{i=1}^n \gnp[\bar{\sigma}_i \,n_i]{\mu}_{\G_i}.
\label{def-int-bf-2}
\end{align}
Using \eqref{dd1-4}, we conclude that the functions satisfying
\eqref{defs1} solve \eqref{dd1-1}--\eqref{dd1-4} if and only if
$\l_h\in\Lambda_h$ solves the interface problem
\begin{align}
a(\l_h,\m) = g(\m) \quad \forall\m\in\Lambda_h. \label{interface-prob}
\end{align}
In the analysis of the interface problem we will utilize the
elliptic projection $\Pit_i: H^1(\O_i,\M) \to \X_{h,i}$ introduced in
\cite{arnold2014mixed}. Given $\s\in \X$ there exists a triple
\\$(\tilde\s_{h,i},\tilde u_{h,i},\tilde \g_{h,i}) \in \X_{h,i}\times V_{h,i} \times \W_{h,i}$
such that
\begin{align}
&\inp[\tilde\s_{h,i}]{\tau}_{\O_i} + \inp[\tilde u_{h,i}]{\dvr{\tau}}_{\O_i}
+ \inp[\tilde\g_{h,i}]{\tau}_{\O_i}
= \inp[\s]{\t}_{\O_i}, &\forall \tau &\in \X^0_{h,i}, \label{proj-1}\\
&\inp[\dvr \tilde\s_{h,i}]{v}_{\O_i} = \inp[\dvr \s]{v}_{\O_i},
&\forall v &\in V_{h,i}, \label{proj-2} \\
&\inp[\tilde\sigma_{h}]{\xi}_{\O_i} = \inp[\s]{\xi}_{\O_i},
&\forall \xi &\in \W_{h,i} \label{proj-3}, \\
&\tilde\s_{h,i} n_i = (\Pi_i\s) n_i \quad \mbox{on } \dO_i. \label{proj-4}
\end{align}
Namely, $(\tilde\s_{h,i},\tilde u_{h,i},\tilde \g_{h,i})$ is a mixed
method approximation of $(\s,0,0)$ based on solving a Neumann
problem. We note that the problem is singular, with the solution
determined up to $(0,\chi,\skew(\nabla \chi))$, $\chi \in
\mathbb{RM}(\O_i)$, where $\mathbb{RM}(\O_i)$ is the space of rigid
body motions in $\O_i$ and $\skew(\tau) = (\tau - \tau^T)/2$ is the
skew-symmetric part of $\tau$. The problem is well posed, since the
data satisfies the compatibility condition
$$
\inp[\dvr\s]{\chi}_{\O_i} - \gnp[(\Pi_i\s) n_i]{\chi}_{\dO_i}
+ \inp[\s]{\skew(\nabla \chi)}_{\O_i} = 0
\quad \forall \chi \in \mathbb{RM}(\O_i),
$$
where we used \eqref{mixed-proj-prop2} on $\dO_i$. We note that the
definition in \cite{arnold2014mixed} is based on a Dirichlet problem,
but it is easy to see that their arguments extend to the Neumann problem.
We now define $\Pit_i\s = \tilde\s_{h,i}$. If $\s \in \X_{h,i}$ we have
$\tilde\s_{h,i}=\s$, $\tilde u_{h,i}=0$, $\tilde\g_{h,i}=0$, so $\Pit$
is a projection. It follows from \eqref{proj-2}--\eqref{proj-4}
and \eqref{mixed-proj-prop2} that for all $\s\in\X,\,\, \xi\in\W_h$, the projection operator $\Pit$ satisfies
\begin{align}
\label{prop-Pit}
\dvr\Pit_i\s &= \P_{h,i}\dvr\s, \quad
\inp[\Pit_i\s]{\xi}_{\O_i} = \inp[\s]{\xi}_{\O_i},
\quad (\Pit_i \s) n_i = \Q_{h,i} (\s n_i),
\end{align}
where $\Q_{h,i}$
is the $L^2(\dO_i)$-projection onto $\X_{h,i} n_i$. Moreover, the error
estimate \eqref{error-est} for the MFE approximation
\eqref{proj-1}--\eqref{proj-3} implies that, see
\cite{arnold2014mixed} for details,
\begin{align}\label{prop-elliptic-proj}
\|\s-\Pit_i\s\|_{\O_i}\le C\|\s-\Pi\s\|_{\O_i},\quad \s\in H^1(\O_i,\M).
\end{align}
We also note that for $\s \in H^{\epsilon}(\O_i,\M) \cap \X_i$,
$0 < \epsilon < 1$, $\Pi_i\s$ is well defined \cite{arbogast2000mixed,mathew1989domain},
it satisfies
\begin{align*}
\|\Pi_i\s\|_{\O_i} \le C\left( \|\s\|_{\epsilon,\O_i}
+ \|\dvr\s\|_{\O_i} \right),
\end{align*}
and, if $\dvr \s = 0$,
\begin{equation}\label{pi-eps}
\|\s - \Pi_i\s\|_{\O_i} \le C h^{\epsilon}\|\s\|_{\epsilon,\O_i}
\end{equation}
Bound \eqref{prop-elliptic-proj} allows us to extend these results to
$\Pit_i \s$:
\begin{align}\label{eps-ineq-pit}
\|\Pit_i\s\|_{\O_i} \le C\left( \|\s\|_{\epsilon,\O_i}
+ \|\dvr\s\|_{\O_i} \right),
\end{align}
and, if $\dvr \s = 0$,
\begin{equation}\label{pit-eps}
\|\s - \Pit_i\s\|_{\O_i} \le C h^{\epsilon}\|\s\|_{\epsilon,\O_i}.
\end{equation}

We are now ready to state and prove the main results for the interface
problem \eqref{interface-prob}.

\begin{lemma}
The interface bilinear form $a(\cdot,\cdot)$ is symmetric and positive
definite over $\Lambda_h$.
\end{lemma}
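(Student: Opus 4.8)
The plan is to establish symmetry and positive definiteness of $a(\cdot,\cdot)$ by exploiting the structure of the solution operator $\lambda_h \mapsto (\sigma^*_{h,i}(\lambda_h), u^*_{h,i}(\lambda_h), \g^*_{h,i}(\lambda_h))$ defined by \eqref{star-eqn1}--\eqref{star-eqn3}. The key observation is that the bilinear form can be rewritten as a sum of symmetric subdomain energy forms. First I would, for fixed $\lambda_h, \mu \in \Lambda_h$, take $\tau = \sigma^*_{h,i}(\mu)$ as a test function in \eqref{star-eqn1} written for $\lambda_h$, obtaining
\begin{align*}
\inp[A\sigma^*_{h,i}(\lambda_h)]{\sigma^*_{h,i}(\mu)}_{\O_i}
+ \inp[u^*_{h,i}(\lambda_h)]{\dvr \sigma^*_{h,i}(\mu)}_{\O_i}
+ \inp[\g^*_{h,i}(\lambda_h)]{\sigma^*_{h,i}(\mu)}_{\O_i}
= \gnp[\lambda_h]{\sigma^*_{h,i}(\mu)\,n_i}_{\G_i}.
\end{align*}
By \eqref{star-eqn2} applied to $\mu$ with $v = u^*_{h,i}(\lambda_h)$, the second term vanishes; by \eqref{star-eqn3} applied to $\mu$ with $\xi = \g^*_{h,i}(\lambda_h)$, the third term vanishes. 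Hence $\gnp[\lambda_h]{\sigma^*_{h,i}(\mu)\,n_i}_{\G_i} = \inp[A\sigma^*_{h,i}(\lambda_h)]{\sigma^*_{h,i}(\mu)}_{\O_i}$, so that $-a_i(\mu,\lambda_h) = \inp[A\sigma^*_{h,i}(\lambda_h)]{\sigma^*_{h,i}(\mu)}_{\O_i}$. Wait — I need to track signs carefully: from the definition \eqref{def-int-bf}, $a_i(\lambda_h,\mu) = -\gnp[\sigma^*_{h,i}(\lambda_h)\,n_i]{\mu}_{\G_i}$, so the computation above (with the roles reversed) gives $a_i(\lambda_h,\mu) = -\inp[A\sigma^*_{h,i}(\mu)]{\sigma^*_{h,i}(\lambda_h)}_{\O_i}$. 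Since $A$ is self-adjoint, this expression is symmetric in $\lambda_h$ and $\mu$, and summing over $i$ yields symmetry of $a(\cdot,\cdot)$.

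For positive semi-definiteness, note $a(\lambda_h,\lambda_h) = -\sum_{i=1}^n \inp[A\sigma^*_{h,i}(\lambda_h)]{\sigma^*_{h,i}(\lambda_h)}_{\O_i} \le 0$ — so in fact $-a(\cdot,\cdot)$ is the positive form, or equivalently I should double-check the sign convention in \eqref{def-int-bf}; in any case $a(\lambda_h,\lambda_h)$ has a definite sign and equals (up to sign) $\sum_i \inp[A\sigma^*_{h,i}(\lambda_h)]{\sigma^*_{h,i}(\lambda_h)}_{\O_i}$, which by the coercivity bound \eqref{coer-cont} is comparable to $\sum_i \|\sigma^*_{h,i}(\lambda_h)\|_{\O_i}^2 \ge 0$. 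This gives semi-definiteness immediately.

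The main obstacle is definiteness: I must show that $a(\lambda_h,\lambda_h) = 0$ forces $\lambda_h = 0$. From the energy identity, $a(\lambda_h,\lambda_h) = 0$ implies $\sigma^*_{h,i}(\lambda_h) = 0$ for all $i$ (using positive definiteness of $A$). Then \eqref{star-eqn1} reduces to $\inp[u^*_{h,i}(\lambda_h)]{\dvr\tau}_{\O_i} + \inp[\g^*_{h,i}(\lambda_h)]{\tau}_{\O_i} = \gnp[\lambda_h]{\tau\,n_i}_{\G_i}$ for all $\tau \in \X_{h,i}$. The plan is to choose $\tau$ cleverly to conclude $\lambda_h = 0$: since $\Lambda_{h,i,j} = \X_h\,n$ on each interface, for any $\mu \in \Lambda_h$ there is (by the normal-trace surjectivity built into the choice of the multiplier space, together with the projection property \eqref{mixed-proj-prop2}) a $\tau \in \X_{h,i}$ with $\tau\,n_i = \mu$ on $\G_i$, while controlling $\dvr\tau$ — or more directly, one picks $\tau$ supported near $\G_i$ with prescribed normal trace. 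Taking $\mu = \lambda_h$ and testing, the left-hand side is handled by noting that $\inp[u^*_{h,i}(\lambda_h)]{\dvr\tau}_{\O_i}$ and $\inp[\g^*_{h,i}(\lambda_h)]{\tau}_{\O_i}$ can be made to vanish or be controlled by an appropriate choice (e.g., first eliminate $u^*$ and $\g^*$ using \eqref{star-eqn2}--\eqref{star-eqn3} with $\sigma^*_{h,i}(\lambda_h) = 0$ already known), ultimately forcing $\|\lambda_h\|_{\G_i}^2 = 0$. The delicate point is justifying the existence of such a test function $\tau \in \X_{h,i}$ with the required normal trace — this is exactly the inf-sup / normal-trace compatibility of the MFE space with $\Lambda_h$, which holds because $\Lambda_{h,i,j}$ is defined as $\X_h\,n$; I would invoke this structural fact, analogous to the argument in \cite{GW} for the scalar elliptic case.
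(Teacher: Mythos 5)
Your symmetry and positive semi-definiteness argument is exactly the paper's: test \eqref{star-eqn1} for one datum with the stress generated by the other, kill the displacement and rotation terms via \eqref{star-eqn2}--\eqref{star-eqn3}, and read off $a(\l_h,\m)$ as the energy $\sum_i\inp[A\ss_{h,i}(\m)]{\ss_{h,i}(\l_h)}_{\O_i}$ up to sign (your remark about the sign is fair --- the paper's \eqref{def-int-bf} and \eqref{represent1} are indeed inconsistent by a sign, and the intended object is the positive energy form). The definiteness step, however, has a genuine gap, in two places.

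First, the test function you need is not merely one with $\tau\,n_i=\l_h$ on $\G_i$. With $\ss_{h,i}(\l_h)=0$, equation \eqref{star-eqn1} leaves $\inp[u^*_{h,i}(\l_h)]{\dvr\tau}_{\O_i}+\inp[\g^*_{h,i}(\l_h)]{\tau}_{\O_i}=\gnp[\l_h]{\tau\,n_i}_{\G_i}$, and you have no information about $u^*_{h,i}(\l_h)$ or $\g^*_{h,i}(\l_h)$: equations \eqref{star-eqn2}--\eqref{star-eqn3} constrain only $\ss_{h,i}$, so they cannot be used to ``eliminate'' the other two fields. You therefore need a $\tau\in\X_{h,i}$ with prescribed normal trace $\l_h$ on $\G_i$ \emph{and} $\inp[\dvr\tau]{v}_{\O_i}=0$ for all $v\in V_{h,i}$ \emph{and} $\inp[\tau]{\xi}_{\O_i}=0$ for all $\xi\in\W_{h,i}$, simultaneously. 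A $\tau$ supported near $\G_i$ with the right normal trace will satisfy neither orthogonality. The paper manufactures such a $\tau$ as $\Pit_i\psi_i$, where $\psi_i=A^{-1}\epsilon(\phi_i)$ solves the continuous auxiliary problem \eqref{aux1}--\eqref{aux2} with traction data $\l_h$ on $\G_i$; the properties \eqref{prop-Pit} then give $(\Pit_i\psi_i)n_i=\Q_{h,i}(\psi_i n_i)=\l_h$ (since $\l_h\in\X_{h,i}n_i$), $\dvr\Pit_i\psi_i=\P_{h,i}\dvr\psi_i=0$, and $\inp[\Pit_i\psi_i]{\xi}_{\O_i}=\inp[\psi_i]{\xi}_{\O_i}=0$ for skew $\xi$ because $\psi_i$ is symmetric. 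This auxiliary-problem-plus-elliptic-projection construction is the missing idea.

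Second, a purely local conclusion ``$a_i(\l_h,\l_h)=0$ forces $\l_h=0$ on $\G_i$'' is false for floating subdomains: if $\dO_i\cap\G_D=\emptyset$ and $\l_h|_{\G_i}$ is the trace of a rigid body motion $\chi\in\RM(\O_i)$, then $\ss_{h,i}(\l_h)=0$ while $\l_h\neq 0$. Definiteness of the global form is not a sum of local statements; it uses the single-valuedness of $\l_h$ across interfaces and is proved by sweeping outward from subdomains adjacent to $\G_D$ (where the auxiliary problem is well posed and has no rigid-motion kernel), modifying the auxiliary problem on each successive subdomain $\O_j$ so that $\phi_j=0$ on an interface $\G_{i,j}$ where $\l_h$ has already been shown to vanish. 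Without this propagation mechanism your argument cannot close.
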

\begin{proof}
For $\mu \in \l_h$, consider \eqref{star-eqn1} with data $\mu$ and
take $\t = \ss_{h,i}(\lambda_h)$, which implies
\begin{align}\label{represent1}
a(\lambda_h,\mu) = \sum_{i=1}^n \inp[A\ss_{h,i}(\m)]{\ss_{h,i}(\l_h)}_{\O_i},
\end{align}
using \eqref{def-int-bf}, \eqref{star-eqn2} and \eqref{star-eqn3}.
This implies that $a(\cdot,\cdot)$ is symmetric and positive
semi-definite over $\Lambda_h$. We now show that if
$a(\lambda_h,\lambda_h) = 0$, then $\lambda_h = 0$.
Let $\O_i$ be a domain adjacent to $\G_D$, i.e. $\meas(\dO_i\cap \G_D) > 0$.
Let $(\psi_i,\phi_i)$ be the solution of the auxiliary problem
\begin{align}
& A \psi_i = \epsilon(\phi_i), \quad \dvr \psi_i = 0
\quad \text{in }\O_i, \label{aux1}\\
& \phi_i = 0 \quad \text{on } \dO_i\cap\G_D, \\
& \psi_i\,n_i = \begin{cases} 0 & \mbox{on } \dO_i\cap\G_N \\
\lambda_h & \mbox{on } \G_i.
\end{cases} \label{aux2}
\end{align}
Since $\psi_i \in H^{\epsilon}(\O_i,\M) \cap \X_i$ for some $\epsilon > 0$,
see e.g. \cite{grisvard2011elliptic},
$\Pit_i\psi_i$ is well defined and we can take $\t = \Pit_i\psi_i$ in
\eqref{star-eqn1}. Noting that $a(\lambda_h,\lambda_h) = 0$ implies
$\ss_{h,i}(\l_h) = 0$, we have, using \eqref{prop-Pit},
\begin{align}
\gnp[\lambda_h]{\lambda_h}_{\G_i}
&= \gnp[\lambda_h]{(\Pit_i\psi_i)n_i}_{\G_i} \nonumber\\
&= \inp[u^*_{h,i}(\lambda_h)]{\dvr{\Pit_i\psi_i}}_{\O_i}
+ \inp[\gh{i}^*(\lambda_h)]{\Pit_i\psi_i}_{\O_i}
= 0,\label{obs}
\end{align}
which implies $\lambda_h = 0$ on $\G_i$. Next,
consider a domain $\O_j$ adjacent to $\O_i$ such that $\meas(\G_{i,j}) > 0$.
Let $(\psi_j, \phi_j)$ be the solution of
(\ref{aux1})--(\ref{aux2}) modified such that $\phi_j = 0$ on
$\G_{i,j}$. Repeating the above argument implies that
that $\lambda_h = 0$ on $\G_j$. Iterating over all
domains in this fashion allows us to conclude that $\lambda_h = 0$ on
$\G$. Therefore $a(\cdot,\cdot)$ is symmetric and positive definite
over $\Lambda_h$.
\end{proof}

As a consequence of the above lemma, the conjugate gradient (CG)
method can be applied
for solving the interface problem (\ref{interface-prob}). We next proceed
with providing bounds on the bilinear form $a(\cdot,\cdot)$, which can be used
to bound the condition number of the interface problem.

\begin{theorem}\label{thm:condnum}
There exist positive constants $C_0$ and $C_1$ independent of $h$
such that
\begin{align}
\forall \l_h \in \Lambda_h, \quad
C_0 \frac{4\mu^2}{2\mu+d\lambda} \|\lambda_h\|^2_{\G} \le a(\lambda_h,\lambda_h)
\le C_1(2\mu+d\lambda)h^{-1}\|\lambda_h\|^2_{\G}. \label{condnum}
\end{align}
\end{theorem}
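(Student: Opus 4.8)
The plan is to establish the two bounds in \eqref{condnum} separately, using the representation $a(\lambda_h,\lambda_h) = \sum_{i=1}^n \inp[A\ss_{h,i}(\l_h)]{\ss_{h,i}(\l_h)}_{\O_i}$ from \eqref{represent1} together with the coercivity-continuity bounds \eqref{coer-cont} for $A$. Thus, up to the factors $\tfrac{1}{2\mu+d\lambda}$ and $\tfrac{1}{2\mu}$, the task reduces to comparing $\sum_i \|\ss_{h,i}(\l_h)\|_{\O_i}^2$ with $\|\lambda_h\|_\G^2$ from both sides.

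\smallskip
\noindent\textbf{Upper bound.} First I would bound $\|\ss_{h,i}(\l_h)\|_{\O_i}$ in terms of $\|\lambda_h\|_{\G_i}$. Take $\tau = \ss_{h,i}(\l_h)$ in \eqref{star-eqn1}; using \eqref{star-eqn2}--\eqref{star-eqn3} and the lower bound in \eqref{coer-cont} gives $\tfrac{1}{2\mu+d\lambda}\|\ss_{h,i}(\l_h)\|_{\O_i}^2 \le \gnp[\lambda_h]{\ss_{h,i}(\l_h)\,n_i}_{\G_i} \le \|\lambda_h\|_{\G_i}\,\|\ss_{h,i}(\l_h)\,n_i\|_{\G_i}$. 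The normal trace $\|\ss_{h,i}(\l_h)\,n_i\|_{\G_i}$ must then be controlled by $\|\ss_{h,i}(\l_h)\|_{\O_i}$; this is where the quasi-uniformity of $\Th_h$ enters, via a discrete trace (inverse) inequality $\|\chi\,n_i\|_{\partial\O_i} \le C h^{-1/2}\|\chi\|_{\O_i}$ valid for $\chi \in \X_{h,i}$ (since $\dvr\chi \in V_{h,i}$ and an inverse estimate bounds $\|\dvr\chi\|_{\O_i}$ by $h^{-1}\|\chi\|_{\O_i}$, or more directly by a scaling argument element-by-element on the boundary layer). This yields $\|\ss_{h,i}(\l_h)\|_{\O_i} \le C(2\mu+d\lambda)h^{-1/2}\|\lambda_h\|_{\G_i}$, and plugging into \eqref{represent1} with the upper bound of \eqref{coer-cont} and summing over $i$ gives $a(\lambda_h,\lambda_h) \le C_1(2\mu+d\lambda)h^{-1}\|\lambda_h\|_\G^2$ after absorbing $\tfrac{1}{2\mu}\cdot(2\mu+d\lambda)^2 \le C(2\mu+d\lambda)$.

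\smallskip
\noindent\textbf{Lower bound.} Here I would reverse the estimate: I need $\|\lambda_h\|_{\G_i} \le C\,\|\ss_{h,i}(\l_h)\|_{\O_i}$ times appropriate constants, for each subdomain adjacent to $\G_D$, and then propagate. Following the device in the proof of the preceding lemma, for a given $\lambda_h$ let $(\psi_i,\phi_i)$ solve the auxiliary elasticity problem \eqref{aux1}--\eqref{aux2} with Neumann datum $\lambda_h$ on $\G_i$; by elliptic regularity $\psi_i \in H^{\epsilon}(\O_i,\M)\cap\X_i$ with $\|\psi_i\|_{\epsilon,\O_i} \le C\|\lambda_h\|_{\G_i}$ (this is the standard boundary-data-to-solution estimate for the Neumann problem on a polygon). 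Take $\tau = \Pit_i\psi_i$ in \eqref{star-eqn1}: using \eqref{prop-Pit}, the right side is $\gnp[\lambda_h]{\Q_{h,i}(\psi_i n_i)}_{\G_i}$, and since the trace of $\X_{h,i}$ on $\G_i$ is exactly $\Lambda_{h,i}$ (so $\lambda_h$ is $\Q_{h,i}$-invariant on $\G_i$) this equals $\gnp[\lambda_h]{\lambda_h}_{\G_i} = \|\lambda_h\|_{\G_i}^2$. The left side is $\inp[A\ss_{h,i}(\l_h)]{\Pit_i\psi_i}_{\O_i}$ because the $\dvr$ and $\W_h$ terms vanish by \eqref{star-eqn2}--\eqref{star-eqn3} combined with \eqref{prop-Pit}; bounding by Cauchy--Schwarz and the upper bound in \eqref{coer-cont}, then using \eqref{eps-ineq-pit} with $\dvr\psi_i=0$ so that $\|\Pit_i\psi_i\|_{\O_i} \le C\|\psi_i\|_{\epsilon,\O_i} \le C\|\lambda_h\|_{\G_i}$, gives $\|\lambda_h\|_{\G_i}^2 \le \tfrac{C}{2\mu}\|\ss_{h,i}(\l_h)\|_{\O_i}\,\|\lambda_h\|_{\G_i}$, hence $\|\lambda_h\|_{\G_i} \le \tfrac{C}{2\mu}\|\ss_{h,i}(\l_h)\|_{\O_i}$. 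For subdomains not touching $\G_D$, I would use the modified auxiliary problem (homogeneous condition on the portion of $\G_i$ already controlled, $\lambda_h$ on the rest), exactly as in the lemma's iteration, to get $\|\lambda_h\|_{\G_i} \le \tfrac{C}{2\mu}(\|\ss_{h,i}(\l_h)\|_{\O_i} + \|\lambda_h\|_{\G_i \cap \partial\O_j})$ for already-treated neighbors $\O_j$; summing over all subdomains in that order and rearranging gives $\|\lambda_h\|_\G^2 \le \tfrac{C}{4\mu^2}\sum_i \|\ss_{h,i}(\l_h)\|_{\O_i}^2$. Combined with \eqref{represent1} and the lower bound $\inp[A\ss]{\ss}\ge\tfrac{1}{2\mu+d\lambda}\|\ss\|^2$, this yields $C_0\tfrac{4\mu^2}{2\mu+d\lambda}\|\lambda_h\|_\G^2 \le a(\lambda_h,\lambda_h)$.

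\smallskip
\noindent\textbf{Main obstacle.} The delicate point is the lower bound, specifically the uniform (in $h$) control $\|\Pit_i\psi_i\|_{\O_i} \le C\|\lambda_h\|_{\G_i}$ for a function $\psi_i$ with only fractional regularity $H^{\epsilon}$: this requires the stability bound \eqref{eps-ineq-pit} for the elliptic projection on fractional Sobolev spaces, plus the elliptic regularity estimate on a polygonal subdomain relating the Neumann boundary datum to the $H^{\epsilon}$-norm of the solution. The subtlety that $\lambda_h$ is unchanged by $\Q_{h,i}$ on $\G_i$ (so the mortar-type consistency works out cleanly) is what makes the auxiliary-problem test function produce exactly $\|\lambda_h\|_{\G_i}^2$ on the right-hand side. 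The propagation from $\G_D$-adjacent subdomains to the rest is routine but must be done in an order that forms a connected chain, which is possible since $\O$ is connected and $\meas(\G_D) > 0$. The explicit tracking of the Lam\'e-coefficient factors is straightforward bookkeeping once the $h$-dependence is pinned down.
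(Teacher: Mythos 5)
Your proposal follows essentially the same route as the paper's proof: the representation \eqref{represent1} together with \eqref{coer-cont}, the discrete trace inequality \eqref{trace-ineq} for the upper bound, and for the lower bound the auxiliary Neumann problem \eqref{aux1}--\eqref{aux2} tested with $\Pit_i\psi_i$ via \eqref{prop-Pit}, \eqref{eps-ineq-pit}, and the elliptic regularity \eqref{ell-reg}, followed by the same subdomain-by-subdomain propagation away from $\G_D$. The one slip is in the upper bound's constant bookkeeping: your order of operations produces $a_i(\l_h,\l_h)\le \frac{C}{2\mu}(2\mu+d\lambda)^2h^{-1}\|\l_h\|^2_{\G_i}$, and the claimed absorption $\frac{1}{2\mu}(2\mu+d\lambda)^2\le C(2\mu+d\lambda)$ requires $\lambda\le C\mu$, so it fails to give a Lam\'e-robust $C_1$ (which is what the explicit factor $(2\mu+d\lambda)$ in \eqref{condnum} and the subsequent condition-number corollary are after). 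The paper avoids this by first bounding $a_i(\l_h,\l_h)\le Ch^{-1/2}\|\sigma^*_{h,i}(\l_h)\|_{\O_i}\|\l_h\|_{\G_i}$ and then using $\|\sigma^*_{h,i}(\l_h)\|^2_{\O_i}\le(2\mu+d\lambda)\,a_i(\l_h,\l_h)$ from \eqref{coer-cont} and \eqref{represent1}, which yields $a_i(\l_h,\l_h)\le C(2\mu+d\lambda)h^{-1}\|\l_h\|^2_{\G_i}$ directly; the rest of your argument, including the lower bound and its constants, matches the paper.
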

\begin{proof}
Using the definition of $a_i(\cdot,\cdot)$ from (\ref{def-int-bf}) we get
\begin{align}\label{upper-bound}
a_i(\lambda_h,\lambda_h)
&= -\gnp[\sigma_{h,i}^*(\lambda_h)\,n_i]{\lambda_h}_{\G_i} \nonumber\\
&\le \|\sigma_{h,i}^*(\lambda_h)\,n_i\|_{\G_i}\|\lambda_h\|_{\G_i}
\le Ch^{-1/2}\|\sigma_{h,i}^*(\lambda_h)\|_{\O_i}\|\lambda_h\|_{\G_i},
\end{align}
where in the last step we used the discrete trace inequality
\begin{align}\label{trace-ineq}
\forall \, \t \in \X_{h,i}, \quad
\|\t\,n_i\|_{\dO_i} \le Ch^{-1/2}\|\t\|_{\O_i},
\end{align}
which follows from a scaling argument. Using \eqref{upper-bound}
together with (\ref{coer-cont}) and \eqref{represent1} we get
\begin{align*}
a_i(\l_h,\l_h) \le C(2\mu+d\lambda)h^{-1}\|\lambda_h\|^2_{\G_i}.
\end{align*}
Summing over the subdomains results in the upper bound in (\ref{condnum}).

To prove the lower bound, we again refer to the solution of the
auxiliary problem (\ref{aux1})--(\ref{aux2}) for a domain
$\O_i$ adjacent to $\G_D$ and take $\t = \Pit_i\psi_i$ in
\eqref{star-eqn1} to obtain
\begin{align*}
\|\lambda_h\|^2_{\G_i} &= \gnp[\lambda_h]{\psi_i\,n_i}_{\G_i}
= \gnp[\lambda_h]{(\Pit\psi_i)n_i}_{\G_i} \\
&= \inp[A\sigma_{h,i}^*(\lambda_h)]{\Pit\psi_i}_{\O_i} + \inp[u_{h,i}^*(\lambda_h)]{\dvr{\Pit\psi_i}}_{\O_i} + \inp[\gh{i}^*(\lambda_h)]{\Pit\psi_i}_{\O_i} \\
&= \inp[A\sigma_{h,i}^*(\lambda)]{\Pit\psi_i}_{\O_i}
\le C\frac{1}{2\mu}\|\sigma_{h,i}^*(\lambda_h)\|_{\O_i}
\,\|\psi_i\|_{\epsilon,\O_i}
\le C\frac{1}{2\mu}\|\sigma_{h,i}^*(\lambda_h)\|_{\O_i} \|\l_h\|_{\G_i},
\end{align*}
where we used \eqref{prop-Pit}, \eqref{eps-ineq-pit}, \eqref{coer-cont}, and
the elliptic regularity \cite{lions2011non,grisvard2011elliptic}
\begin{align}\label{ell-reg}
\|\psi_i\|_{1/2,\O_i} \le C\|\l_h\|_{\G_i}.
\end{align}
Using \eqref{coer-cont} and \eqref{represent1},
we obtain that
\begin{align*}
\|\l_h\|^2_{\G_i} \le C\frac{2\mu+d\lambda}{4\mu^2}a_i(\l_h,\l_h).
\end{align*}
Next, consider a domain $\O_j$ adjacent to $\O_i$ with
$\meas(\G_{i,j}) > 0$. Let $(\psi_j, \phi_j)$ be the solution of
(\ref{aux1})--(\ref{aux2}) modified such that $\phi_j = 0$ on
$\G_{i,j}$. Taking $\t = \Pit_j\psi_j$ in \eqref{star-eqn1} for $\O_j$,
we obtain
\begin{align*}
\|\lambda_h\|^2_{\G_j\setminus \G_{i,j}} & =
\inp[A\sigma_{h,j}^*(\lambda)]{\Pit\psi_j}_{\O_j}
- \gnp[\l_h]{\Pit_j\psi_j \, n_j}_{\G_{i,j}}\\
& \le C\left(\frac{1}{2\mu}\|\sigma_{h,j}^*(\lambda_h)\|_{\O_j}
\|\l_h\|_{\G_j\setminus \G_{i,j}} + \|\l_h\|_{\G_{i,j}}\|\psi_j \, n_j\|_{\G_{i,j}}
\right)\\
& \le C \frac{\sqrt{2\mu + d\l}}{2\mu}
\left(a_j^{1/2}(\l_h,\l_h)
+ a_i^{1/2}(\l_h,\l_h)\right)\|\l_h\|_{\G_j\setminus \G_{i,j}},
\end{align*}
where for the last inequality we used the trace inequality $\|\psi_j
\, n_j\|_{\G_{i,j}} \le C \|\psi_j\|_{1/2,\O_j}$, which follows by
interpolating $\|\psi_j \, n_j\|_{-1/2,\dO_j} \le C
\|\psi_j\|_{H(\dvr;\O_j)} = C \|\psi_j\|_{\O_j}$
\cite{brezzi1991mixed} and $\|\psi_j \, n_j\|_{\e,\dO_j} \le C
\|\psi_j\|_{1/2+\e,\dO_j}$ \cite{grisvard2011elliptic}, together with
the elliptic regularity \eqref{ell-reg}. Iterating over all subdomains
in a similar fashion completes the proof of the lower bound in
\eqref{condnum}.
\end{proof}

\begin{corollary}
Let $A:\Lambda_h\to\Lambda_h$ be such that $\gnp[A \,\lambda]{\mu}_{\G} =
a(\lambda,\mu) \,\, \forall \, \l,\mu \in \Lambda_h$. Then there exists
a positive constant $C$ independent of h such that
$$
\text{cond}(A) \le C \left(\frac{2\m + d \l}{2\m} \right)^2 h^{-1}.
$$
\end{corollary}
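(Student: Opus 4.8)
The plan is to read off the condition number bound directly from the two-sided estimate in Theorem \ref{thm:condnum}. The operator $A$ is symmetric and positive definite on $\Lambda_h$ by the preceding Lemma, so with respect to the $L^2(\G)$ inner product it has a full set of real positive eigenvalues, and $\mathrm{cond}(A) = \lambda_{\max}(A)/\lambda_{\min}(A)$. By the Rayleigh quotient characterization,
\begin{equation*}
\lambda_{\min}(A) = \min_{0 \neq \l_h \in \Lambda_h} \frac{\gnp[A\l_h]{\l_h}_\G}{\|\l_h\|_\G^2}
= \min_{0 \neq \l_h \in \Lambda_h} \frac{a(\l_h,\l_h)}{\|\l_h\|_\G^2},
\end{equation*}
and similarly $\lambda_{\max}(A)$ is the corresponding maximum. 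The first step is therefore to invoke \eqref{condnum}: the lower bound gives $\lambda_{\min}(A) \ge C_0 \frac{4\m^2}{2\m+d\l}$, and the upper bound gives $\lambda_{\max}(A) \le C_1 (2\m+d\l) h^{-1}$.

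The second step is simply to divide:
\begin{equation*}
\mathrm{cond}(A) = \frac{\lambda_{\max}(A)}{\lambda_{\min}(A)}
\le \frac{C_1 (2\m+d\l) h^{-1}}{C_0 \frac{4\m^2}{2\m+d\l}}
= \frac{C_1}{C_0} \, \frac{(2\m+d\l)^2}{4\m^2} \, h^{-1}
= \frac{C_1}{C_0} \left( \frac{2\m+d\l}{2\m} \right)^2 h^{-1},
\end{equation*}
and absorbing $C_1/C_0$ into the generic constant $C$ yields the claimed estimate. One should note for completeness that $\|\cdot\|_\G$ does define a norm on $\Lambda_h$ (it is the $L^2$ norm on the union of interior interfaces, and the positive definiteness of $a$ from the Lemma ensures that the zero element is the only one annihilated), so the Rayleigh quotients above are well defined.

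There is essentially no obstacle here: all the work has already been done in Theorem \ref{thm:condnum}, and the corollary is a one-line consequence. The only mild point to be careful about is the choice of inner product defining both the operator $A$ and the condition number — they must be taken consistently with respect to $\gnp[\cdot]{\cdot}_\G$, which is exactly how $A$ is defined in the statement, so the spectral bounds translate verbatim into bounds on $\mathrm{cond}(A)$.
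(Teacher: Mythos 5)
Your proposal is correct and follows exactly the argument the paper intends (the corollary is stated without proof, being an immediate consequence of Theorem \ref{thm:condnum}): identify the extreme eigenvalues of the symmetric positive definite operator $A$ via Rayleigh quotients in the $L^2(\G)$ inner product, bound them by the two sides of \eqref{condnum}, and take the ratio.
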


\subsection{Method 2}

We introduce the bilinear forms $b_i : \X_{h}^\G\times\X_{h}^\G \to \R$,
$1\le i \le n$, and $b : \X_h^\G\times\X_h^\G \to \R$ by
\begin{align*}
&b_i(\l_h,\m) = \inp[A\ss_{h,i}(\l_h)]{\m}_{\O_i}
+ \inp[\us_{h,i}(\l_h)]{\dvr{\m}}_{\O_i} + \inp[\gs_{h,i}(\l_h)]{\m}_{\O_i}, \\
&b(\l_h,\m) = \sum_{i=1}^n b_i(\l_h,\m),
\end{align*}
where, for a given $\l_h\in\X_{h}^\G$,
$(\ss_{h,i}(\l_h), \us_{h,i}(\l_h), \gs_{h,i}(\l_h)) \in \X_{h,i}\times V_{h,i}\times \W_{h,i}$ solve
\begin{align}
&\inp[A\ss_{h,i}(\l_h)]{\tau}_{\O_i}
+ \inp[\us_{h,i}(\l_h)]{\dvr{\tau}}_{\O_i}
+ \inp[\gs_{h,i}(\l_h)]{\tau}_{\O_i} = 0, &\forall \tau &\in \X_{h,i}^0,
\label{star2-1}\\
&\inp[\dvr \ss_{h,i}(\l_h) ]{v}_{\O_i} = 0, &\forall v &\in V_{h,i},
\label{star2-2} \\
&\inp[\ss_{h,i}(\l_h)]{\xi}_{\O_i} = 0, &\forall \xi &\in \W_{h,i},
\label{star2-3}\\
&\ss_{h,i}(\l_h)\,n_i = \l_h\,n_i \quad \mbox{on } \G_i. \label{star2-4}
\end{align}
Define the linear functional $h:\X_{h}^\G \to \R$ by
\begin{align}
h(\m) = -\sum_{i=1}^n \left[ \inp[A\bar{\sigma}_i]{\m}_{\O_i}
+ \inp[\ub_i]{\dvr{\m}}_{\O_i} + \inp[\gb_i]{\m}_{\O_i} \right],
\end{align}
where $(\bar{\sigma}_i, \ub_i, \gb_i) \in \X_{h,i}^0\times V_{h,i}\times \W_{h,i}$ solve
\begin{align}
&\inp[A\bar{\sigma}_{h,i}]{\tau}_{\O_i} + \inp[\ub_{h,i}]{\dvr{\tau}}_{\O_i}
+ \inp[\gb_{h,i}]{\tau}_{\O_i} = \gnp[g_D]{\tau\,n_i}_{\partial \O_i \cap \G_D},
&\forall \tau &\in \X_{h,i}^0, \label{bar2-1}\\
&\inp[\dvr \bar{\sigma}_{h,i} ]{v}_{\O_i} = \inp[f]{v}_{\O_i},
&\forall v &\in V_{h,i}, \label{bar2-2}\\
&\inp[\bar{\sigma}_{h,i}]{\xi}_{\O_i} = 0, &\forall \xi &\in \W_{h,i}.
\label{bar2-3}
\end{align}
By writing
\begin{align}
\sigma_{h,i} =\ss_{h,i}(\lambda_h) + \bar{\sigma}_{h,i}, && u_{h,i} = \us_{h,i}(\lambda_h) + \ub_{h,i}, && \g_{h,i} = \gs_{h,i}(\lambda_h) + \gb_{h,i} \label{defs2},
\end{align}
it is easy to see that the solution to \eqref{dd2-1}--\eqref{dd2-5}
satisfies the following interface problem: find $\l_h\in\X_{h}^\G$ such that
\begin{align}\label{interface-prob2}
b(\l_h,\m) = h(\m), \quad \forall \m\in\X_h^\G.
\end{align}

\begin{remark}
We note that the Neumann subdomain problems \eqref{star2-1}--\eqref{star2-4} and
\eqref{bar2-1}--\eqref{bar2-3} are singular if $\dO_i \cap \Gamma_D = \emptyset$.
In such case the compatibility conditions for the solvability of
\eqref{star2-1}--\eqref{star2-4} and \eqref{bar2-1}--\eqref{bar2-3} are,
respectively, $\gnp[\lambda_h n_i]{\chi}_{\G_i} = 0$ and
$\inp[f]{\chi}_{\O_i} = 0$ for all $\chi \in \mathbb{RM}(\O_i)$. These can be
guaranteed by employing the one-level FETI method \cite{farhat1991method,Toselli-Widlund}.
This involves solving a coarse space problem, which projects the
interface problem onto a subspace orthogonal to the kernel of the subdomain
operators, see \cite{VasWangYot} for details. In the following we analyze
the interface problem in this subspace, denoted by
$$
\X_{h,0}^\G = \{\mu \in \X_{h}^\G: \gnp[\mu \, n_i]{\chi}_{\G_i} = 0 \,\, \forall \,
\chi \in \mathbb{RM}(\O_i), \forall \, i \mbox{ such that }
\dO_i \cap \Gamma_D = \emptyset \}.
$$
\end{remark}

\begin{lemma}\label{interface-spd2}
The interface bilinear form $b(\cdot,\cdot)$ is symmetric and positive
definite over $\X_{h,0}^\G$.
\end{lemma}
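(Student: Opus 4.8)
The plan is to mirror the structure of the proof of the first lemma, adapting it to the stress Lagrange multiplier $\lambda_h \in \X_{h,0}^\G$ and the bilinear form $b(\cdot,\cdot)$. First I would establish symmetry and positive semi-definiteness: taking $\tau = \ss_{h,i}(\mu)$ in \eqref{star2-1} and using \eqref{star2-2}, \eqref{star2-3} together with \eqref{star2-4} (the latter handling the boundary term, since $\ss_{h,i}(\mu) n_i = \mu n_i$ on $\G_i$ and $\ss_{h,i}(\mu) \in \X_{h,i}^0$ otherwise), one gets a representation of the form
\begin{align*}
b(\lambda_h,\mu) = \sum_{i=1}^n \inp[A\ss_{h,i}(\lambda_h)]{\ss_{h,i}(\mu)}_{\O_i},
\end{align*}
which is manifestly symmetric, and taking $\mu = \lambda_h$ gives $b(\lambda_h,\lambda_h) = \sum_i \inp[A\ss_{h,i}(\lambda_h)]{\ss_{h,i}(\lambda_h)}_{\O_i} \ge 0$ by \eqref{coer-cont}.

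Next I would prove definiteness: suppose $b(\lambda_h,\lambda_h) = 0$. Then by \eqref{coer-cont} each $\ss_{h,i}(\lambda_h) = 0$. Combined with \eqref{star2-4} this forces $\lambda_h n_i = 0$ on every $\G_i$, and hence, since the interface degrees of freedom of $\lambda_h \in \X_h^\G$ are exactly its normal traces on $\G$, we conclude $\lambda_h = 0$. The key algebraic point here is that $\X_h^\G$ is a complement of $\bigoplus_i \X_{h,i}^0$, so an element of $\X_h^\G$ is determined by its normal traces on $\G$; this needs to be stated carefully but is essentially built into the space decomposition. I would also remark that this step uses $\lambda_h \in \X_{h,0}^\G$ only implicitly — the argument that $\ss_{h,i}(\lambda_h)$ exists requires the compatibility condition from the remark, so restricting to $\X_{h,0}^\G$ is what makes $\ss_{h,i}(\cdot)$ well defined on floating subdomains in the first place.

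The main obstacle, compared with the proof of the first lemma, is that for Method 2 there is no auxiliary PDE to solve and no elliptic projection argument needed — definiteness follows directly from \eqref{star2-4}. So in fact this proof is shorter; the subtlety is instead in being precise about the solvability of \eqref{star2-1}--\eqref{star2-4} on floating subdomains. On a subdomain $\O_i$ with $\dO_i \cap \G_D = \emptyset$, the system is singular with kernel $(0,\chi,\skew(\nabla\chi))$, $\chi \in \RM(\O_i)$, and the compatibility condition $\gnp[\lambda_h n_i]{\chi}_{\G_i} = 0$ (together with the trivial zero right-hand sides in \eqref{star2-2}--\eqref{star2-3}) is exactly the defining condition of $\X_{h,0}^\G$. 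Thus $\ss_{h,i}(\lambda_h)$ is uniquely determined for $\lambda_h \in \X_{h,0}^\G$ — the components $\us_{h,i}(\lambda_h)$, $\gs_{h,i}(\lambda_h)$ are determined only up to the rigid-body kernel, but this does not affect $\ss_{h,i}(\lambda_h)$ nor, after the representation above, the value of $b(\lambda_h,\mu)$, since the dependence on $\us, \gs$ drops out once we test against $\ss$. I would include a sentence making this well-definedness explicit before invoking the representation formula.
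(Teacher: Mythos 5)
Your proof is correct. The representation formula is obtained exactly as in the paper: the paper splits the test function as $\m = \ss_h(\m) + \sum_i \s^0_{h,i}$ with $\s^0_{h,i}\in\X_{h,i}^0$, kills the $\X_{h,i}^0$ part with \eqref{star2-1} and the remaining $\us,\gs$ terms with \eqref{star2-2}--\eqref{star2-3}; your phrasing ``taking $\t=\ss_{h,i}(\m)$ in \eqref{star2-1}'' is literally inadmissible since $\ss_{h,i}(\m)\notin\X_{h,i}^0$, but your parenthetical makes clear you intend precisely this splitting, so this is only a matter of wording. Where you genuinely diverge is in the definiteness step. The paper invokes the normal trace inequality to get the quantitative bound $C\|\l_h n_i\|^2_{H^{-1/2}(\G_i)}\le(2\m+d\l)\,b_i(\l_h,\l_h)$, which it then reuses (together with an inverse inequality) for the lower bound in the condition number theorem that follows. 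You instead argue purely algebraically: $b(\l_h,\l_h)=0$ forces $\ss_{h,i}(\l_h)=0$, hence $\l_h n_i=0$ on $\G_i$ by \eqref{star2-4}, and an element of $\X_h^\G$ with vanishing normal trace on $\G$ lies in $\bigl(\bigoplus_i\X_{h,i}^0\bigr)\cap\X_h^\G=\{0\}$ by the directness of the sum. This is shorter and more elementary, and suffices for the lemma as stated, though it does not produce the coercivity estimate the paper needs later. Your discussion of solvability on floating subdomains and of why the non-uniqueness of $(\us_{h,i},\gs_{h,i})$ does not affect $b$ is a worthwhile addition that the paper leaves implicit in the preceding remark.
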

\begin{proof}
We start by showing that
\begin{align}\label{represent2}
b(\l_h,\m) = \sum_{i=1}^n \inp[A\ss_{h,i}(\l_h)_i]{\ss_{h,i}(\m)}_{\O_i}.
\end{align}
To this end, consider the following splitting of $\m$:
$$
\m = \ss_h(\m) + \sum_{i=1}^n \s_{h,i}^0,
$$
where $\ss_h(\m) \big|_{\O_i} = \ss_{h,i}(\m)$ and $\s_{h,i}^0 \in \X_{h,i}^0$.
The the definition of $b_i(\cdot,\cdot)$ reads
\begin{align*}
b_i(\l_h,\m) &= \inp[A\ss_{h,i}(\l_h)]{\ss_{h,i}(\m)}_{\O_i}
+ \inp[\us_{h,i}(\l_h)]{\dvr{\ss_{h,i}(\m)}}_{\O_i}
+ \inp[\gs_{h,i}(\l_h)]{\ss_{h,i}(\m)}_{\O_i} \\
&\quad+ \inp[A\ss_{h,i}(\l_h)]{\s_{h,i}^0}_{\O_i}
+ \inp[\us_{h,i}(\l_h)]{\dvr{\s_{h,i}^0}}_{\O_i}
+ \inp[\gs_{h,i}(\l_h)]{\s_{h,i}^0}_{\O_i} \\
&= \inp[A\ss_{h,i}(\l_h)]{\ss_{h,i}(\m)}_{\O_i},
\end{align*}
using \eqref{star2-1}, \eqref{star2-2} and \eqref{star2-3}. Therefore
\eqref{represent2} holds, which implies that $b(\l_h,\m)$ is symmetric
and positive definite. We next note that, since
$\ss_{h,i}(\l_h) \in H(\dvr,\O_i)$ and $\ss_{h,i}(\l_h)n_i = 0$ on
$\dO_i\setminus \G_i$, then
$\ss_{h,i}(\l_h)n_i = \lambda_h n_i \in H^{-1/2}(\G_i)$ and the normal trace
inequality \cite{galvis2007non} implies
\begin{align}\label{b-spd}
C\|\l_h\,n_i\|^2_{H^{-1/2}(\G_i)} \le \|\ss_{h,i}(\l_h)\|^2_{H(\dvr,\O_i)}
= \|\ss_{h,i}(\l_h)\|^2_{L^2(\O_i)} \le (2\m + d\l)b_i(\l_h,\l_h),
\end{align}
using \eqref{coer-cont} and \eqref{star2-2}. Summing over $\O_i$
proves that $b(\l_h,\l_h)$ is positive definite on $\X_{h,0}^\G$.
\end{proof}
The lemma above shows that the system \eqref{interface-prob2} can be
solved using the CG method. We next prove a bound on $b(\l_h,\l_h)$
that provides an estimate on the condition number of the algebraic
system arising from \eqref{interface-prob2}.
\begin{theorem}
There exist positive constants $c_0$ and
$c_1$ independent of $h$ such that
\begin{align}
\forall \lambda_h \in \X_{h,0}^\G, \quad
c_0\frac{1}{2\m + d\l}h\|\lambda_h\,n\|^2_{\G} \le b(\lambda_h,\lambda_h)
\le c_1 \frac{1}{2\m} \|\lambda_h\,n\|^2_{\G}. \label{condnum2}
\end{align}
\end{theorem}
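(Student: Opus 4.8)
The plan is to mirror the structure of the proof of Theorem~\ref{thm:condnum}, but with the roles of the two bounds essentially reversed: here the \emph{upper} bound comes from elliptic regularity of an auxiliary problem and the \emph{lower} bound comes from a discrete trace (inverse) inequality, because in Method 2 the Lagrange multiplier carries the normal stress rather than the displacement. The key identity to start from is the representation \eqref{represent2}, namely $b(\l_h,\l_h) = \sum_{i=1}^n \inp[A\ss_{h,i}(\l_h)]{\ss_{h,i}(\l_h)}_{\O_i}$, together with the coercivity/continuity bounds \eqref{coer-cont}. So the entire theorem reduces to proving, for each subdomain, two-sided bounds of the form $c_0\, h \|\l_h n_i\|^2_{\G_i} \lesssim \|\ss_{h,i}(\l_h)\|^2_{\O_i} \lesssim c_1 \|\l_h n_i\|^2_{\G_i}$, modulo the $(2\m+d\l)$ and $(2\m)^{-1}$ factors.

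For the \textbf{upper bound} (the easy direction): $\ss_{h,i}(\l_h)$ solves the Neumann-type problem \eqref{star2-1}--\eqref{star2-4} with boundary data $\l_h n_i$ on $\G_i$ and zero on $\dO_i\setminus\G_i$. I would introduce the continuous auxiliary problem: find $(\psi_i,\phi_i)$ with $A\psi_i = \epsilon(\phi_i)$, $\dvr\psi_i = 0$ in $\O_i$, $\psi_i n_i = \l_h n_i$ on $\G_i$, $\psi_i n_i = 0$ on $\dO_i\setminus\G_i$ (with, say, $\phi_i$ fixed by a rigid-motion-orthogonality condition on floating subdomains, consistent with $\l_h\in\X_{h,0}^\G$). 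Then $\ss_{h,i}(\l_h) = \Pit_i\psi_i$ by the characterization of $\Pit_i$ in \eqref{proj-1}--\eqref{proj-4} and the uniqueness of the solution to \eqref{star2-1}--\eqref{star2-4}: both have the same divergence (zero), the same skew part (zero), and the same normal trace $\Q_{h,i}(\psi_i n_i) = \l_h n_i$ (since $\l_h n_i \in \X_{h,i}n_i$ already). Hence by \eqref{eps-ineq-pit} (with $\dvr\psi_i = 0$) and the elliptic regularity \eqref{ell-reg}, $\|\ss_{h,i}(\l_h)\|_{\O_i} = \|\Pit_i\psi_i\|_{\O_i} \le C\|\psi_i\|_{\epsilon,\O_i} \le C\|\l_h n_i\|_{\G_i}$. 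Combined with \eqref{coer-cont} this gives $b_i(\l_h,\l_h) \le \frac{C}{2\m}\|\l_h n_i\|^2_{\G_i}$; summing yields the upper bound in \eqref{condnum2}.

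For the \textbf{lower bound} (the direction that produces the $h$ factor): start from \eqref{b-spd}, i.e. $(2\m+d\l) b_i(\l_h,\l_h) \ge \|\ss_{h,i}(\l_h)\|^2_{\O_i}$. Now $\ss_{h,i}(\l_h) \in \X_{h,i}$ has normal trace $\l_h n_i$ on $\G_i$ and vanishing normal trace elsewhere on $\dO_i$, so the discrete trace inequality \eqref{trace-ineq} applied in reverse is not what I want; instead I need a discrete \emph{inverse} trace inequality of the form $\|\t\|_{\O_i} \ge C h^{1/2}\|\t n_i\|_{\dO_i}$, which is false in general, so the argument must be sharper. The right tool is an explicit local lifting: given boundary data $\l_h n_i$ supported on $\G_i$, there is a $\t_h\in\X_{h,i}$ (built from the mixed projection of a suitable smooth lifting, or directly from the local degrees of freedom on the boundary layer of elements) with $\t_h n_i = \l_h n_i$ on $\dO_i$, $\dvr\t_h = 0$ (or controlled), and $\|\t_h\|_{\O_i} \le C h^{1/2}\|\l_h n_i\|_{\G_i}$ — the $h^{1/2}$ arising because only the one-element-wide layer near $\G_i$ need be nonzero. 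By \eqref{star2-1}--\eqref{star2-4}, $\ss_{h,i}(\l_h)$ is the minimizer of $\inp[A\t]{\t}_{\O_i}$ over all such $\t_h$ with the prescribed normal trace and the constraints \eqref{star2-2}--\eqref{star2-3}, so $\inp[A\ss_{h,i}(\l_h)]{\ss_{h,i}(\l_h)}_{\O_i} \le \inp[A\t_h]{\t_h}_{\O_i} \le \frac{1}{2\m}\|\t_h\|^2_{\O_i} \le \frac{C}{2\m}h\|\l_h n_i\|^2_{\G_i}$. Wait — that is again an upper bound on $b_i$; for the lower bound on $b_i$ I instead test the weak form directly. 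Taking $\tau = \ss_{h,i}(\l_h)$ is not admissible in \eqref{star2-1} (wrong space), so the correct route is: extend $\l_h n_i$ to $\t_h$ as above, write $\ss_{h,i}(\l_h) = \t_h + \t_h^0$ with $\t_h^0\in\X_{h,i}^0$, use \eqref{star2-1} with test function $\t_h^0$ to get $\inp[A\ss_{h,i}(\l_h)]{\ss_{h,i}(\l_h)}_{\O_i} = \inp[A\ss_{h,i}(\l_h)]{\t_h}_{\O_i} \le \frac{1}{2\m}\|\ss_{h,i}(\l_h)\|_{\O_i}\|\t_h\|_{\O_i}$, hence $\|\ss_{h,i}(\l_h)\|_{\O_i} \le \frac{2\m+d\l}{2\m}\|\t_h\|_{\O_i} \le \frac{C}{2\m}(2\m+d\l)h^{1/2}\|\l_h n_i\|_{\G_i}$. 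Then $b_i(\l_h,\l_h) = \inp[A\ss_{h,i}(\l_h)]{\ss_{h,i}(\l_h)}_{\O_i} \ge \frac{1}{2\m+d\l}\|\ss_{h,i}(\l_h)\|^2_{\O_i}$, and combining with \eqref{b-spd} going the other way: actually I want $b_i(\l_h,\l_h) \ge c_0 \frac{1}{2\m+d\l} h \|\l_h n_i\|^2_{\G_i}$ directly, which follows if $\|\ss_{h,i}(\l_h)\|^2_{\O_i} \ge c h \|\l_h n_i\|^2_{\G_i}$ — and this \emph{lower} bound on the energy of the harmonic-type extension is exactly the content of the discrete trace inequality \eqref{trace-ineq} read as $\|\l_h n_i\|_{\G_i} = \|\ss_{h,i}(\l_h) n_i\|_{\G_i} \le C h^{-1/2}\|\ss_{h,i}(\l_h)\|_{\O_i}$, i.e. $\|\ss_{h,i}(\l_h)\|_{\O_i} \ge C^{-1} h^{1/2}\|\l_h n_i\|_{\G_i}$. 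Squaring, using \eqref{coer-cont}, and summing over subdomains gives $b(\l_h,\l_h) \ge c_0\frac{1}{2\m+d\l} h\|\l_h n\|^2_\G$, the lower bound in \eqref{condnum2}. The main obstacle, and the step deserving care, is justifying that $\ss_{h,i}(\l_h) = \Pit_i\psi_i$ for the upper bound (checking all three conditions of \eqref{prop-Pit} match, including that $\l_h n_i \in \X_{h,i}n_i$ so $\Q_{h,i}$ acts trivially) and correctly handling the floating-subdomain compatibility conditions so that the auxiliary problem and the discrete problem are simultaneously well posed on $\X_{h,0}^\G$; the trace-inequality direction of the lower bound is then immediate from \eqref{trace-ineq}.
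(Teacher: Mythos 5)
Your lower bound is correct, and in fact slightly more direct than the paper's: you bound $b_i(\l_h,\l_h)\ge \frac{1}{2\m+d\l}\|\ss_{h,i}(\l_h)\|^2_{\O_i}$ via \eqref{represent2} and \eqref{coer-cont}, and then use the discrete trace inequality \eqref{trace-ineq} applied to $\ss_{h,i}(\l_h)\in\X_{h,i}$ (whose normal trace is $\l_h n_i$ on $\G_i$ and zero elsewhere) to get $\|\ss_{h,i}(\l_h)\|^2_{\O_i}\ge Ch\|\l_h n_i\|^2_{\G_i}$. The paper instead passes through the $H^{-1/2}(\G_i)$ bound \eqref{b-spd} and an inverse inequality between the $L^2$ and $H^{-1/2}$ norms of $\l_h n_i$; both routes are valid and give the same factor of $h$. (The long detour in the middle of your lower-bound discussion about liftings and energy minimization is dead weight that you yourself discard, but the argument you finally settle on is sound.)

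The upper bound, however, has a genuine gap: the claim $\ss_{h,i}(\l_h)=\Pit_i\psi_i$ is false. The elliptic projection is defined by \eqref{proj-1}--\eqref{proj-4}, whose first equation reads $\inp[\tilde\s_{h,i}]{\tau}_{\O_i}+\inp[\tilde u_{h,i}]{\dvr\tau}_{\O_i}+\inp[\tilde\g_{h,i}]{\tau}_{\O_i}=\inp[\psi_i]{\tau}_{\O_i}$ for $\tau\in\X^0_{h,i}$ --- it uses the plain $L^2$ inner product and a nonzero right-hand side --- whereas $\ss_{h,i}(\l_h)$ satisfies \eqref{star2-1}, which uses the compliance form $\inp[A\,\cdot]{\cdot}$ and a zero right-hand side. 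Matching the divergence, the weak symmetry, and the normal trace does not determine an element of $\X_{h,i}$; the remaining interior degrees of freedom are fixed by that first equation, which differs between the two problems (they coincide only when $A=I$). Consequently you cannot invoke \eqref{eps-ineq-pit} to conclude $\|\ss_{h,i}(\l_h)\|_{\O_i}\le C\|\psi_i\|_{\epsilon,\O_i}$. The correct route, which is what the paper does, is to observe that $\ss_{h,i}(\l_h)$ is the MFE approximation of $\psi_i$ for the Neumann elasticity problem with compliance $A$, apply the error estimate \eqref{error-est} together with \eqref{pi-eps} (using $\dvr\psi_i=0$) to get $\|\ss_{h,i}(\l_h)-\psi_i\|_{\O_i}\le Ch^{\epsilon}\|\psi_i\|_{\epsilon,\O_i}$, and then use the triangle inequality and elliptic regularity $\|\psi_i\|_{\epsilon,\O_i}\le C\|\l_h n_i\|_{\G_i}$. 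Your target inequality is the right one and the auxiliary problem is the right one, but the justification needs to be replaced by this approximation argument.
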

\begin{proof}
Using \eqref{b-spd} and the inverse inequality \cite{ciarlet2002finite}
we have
\begin{align}
b_i(\l_h,\l_h) \ge C\frac{1}{2\m + d\l}\|\l_h\,n_i\|^2_{H^{-1/2}(\G_i)}
\ge C\frac{1}{2\m + d\l}h\|\l_h\,n_i\|^2_{\G_i},
\end{align}
and the left inequality in \eqref{condnum2} follows from summing over
the subdomains. To show the right inequality, we consider the auxiliary problem
\begin{align*}
& A \psi_i = \epsilon(\phi_i), \quad \dvr \psi_i = 0
\quad \text{in }\O_i, \\
& \phi_i = 0 \quad \text{on } \dO_i\cap\G_D, \\
& \psi_i\,n_i = \begin{cases} 0 & \mbox{on } \dO_i\cap\G_N \\
\lambda_h n_i & \mbox{on } \G_i.
\end{cases}
\end{align*}
Since $\lambda_h \in \X_{h,0}^\G$, the problem is well posed, even
if $\dO_i\cap\G_D = \emptyset$. From elliptic regularity
\cite{lions2011non,grisvard2011elliptic},
$\psi_i \in H^{\epsilon}(\O_i,\M) \cap \X_i$ for some $\epsilon > 0$ and
\begin{align*}
\|\psi_i\|_{\epsilon,\O_i} \le C\|\l_h n_i\|_{\epsilon-1/2,\G_i}.
\end{align*}
We also note that $\ss_{h,i}(\l_h)$ is the MFE approximation of
$\psi_i$, therefore, using \eqref{error-est}, \eqref{pi-eps}, and a
similar approximation property of $\Rc_{h,i}$, the following error
estimate holds:
$$
\|\ss_{h,i}(\l_h) - \psi_i \|_{\O_i} \le C h^{\epsilon} \|\psi_i\|_{\epsilon,\O_i}.
$$
Using the above two bounds, we have
\begin{align*}
\|\ss_{h,i}(\l_h)\|_{\O_i} \le
\|\ss_{h,i}(\l_h) - \psi_i \|_{\O_i} + \|\psi_i\|_{\O_i}
\le C \|\psi_i\|_{\epsilon,\O_i} \le C \|\l_h n_i\|_{\G_i}.
\end{align*}
Squaring the above bound, using \eqref{represent2} and \eqref{coer-cont},
and summing over the subdomains completes the proof of the right inequality in
\eqref{condnum2}.
\end{proof}

\begin{corollary}
Let $B:\X_{h,0}^\G\to\X_{h,0}^\G$ be such that $\gnp[B \,\lambda]{\mu}_{\G} =
b(\lambda,\mu) \,\, \forall \, \l,\mu \in \X_{h,0}^\G$. Then there exists
a positive constant $C$ independent of h such that
$$
\text{cond}(B) \le C \frac{2\m + d \l}{2\m} h^{-1}.
$$
\end{corollary}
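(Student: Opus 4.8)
The plan is to derive the condition number bound directly from the spectral estimate \eqref{condnum2} by the standard argument relating the bilinear form to the Rayleigh quotient of the associated operator $B$. First I would observe that, by definition of $B$, for every $\l_h \in \X_{h,0}^\G$ we have $a$-type identity $\gnp[B\l_h]{\l_h}_\G = b(\l_h,\l_h)$, so the two-sided bound \eqref{condnum2} reads
\begin{align*}
c_0\frac{1}{2\m+d\l}h\,\|\l_h\,n\|_\G^2 \le \gnp[B\l_h]{\l_h}_\G \le c_1\frac{1}{2\m}\|\l_h\,n\|_\G^2.
\end{align*}
Since $B$ is symmetric positive definite on $\X_{h,0}^\G$ (Lemma \ref{interface-spd2}), its extreme eigenvalues are characterized by the Rayleigh quotient $\gnp[B\l_h]{\l_h}_\G / \gnp[\l_h]{\l_h}_\G$, and here the natural inner product on the multiplier space is induced by the pairing $\gnp[\cdot]{\cdot}_\G$ acting on normal traces, so that $\gnp[\l_h]{\l_h}_\G$ should be read as $\|\l_h\,n\|_\G^2$ (consistent with how $B$ is defined via $\gnp[B\l]{\mu}_\G = b(\l,\mu)$ and with the norm appearing on both sides of \eqref{condnum2}). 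Hence $\lambda_{\min}(B) \ge c_0\,h/(2\m+d\l)$ and $\lambda_{\max}(B) \le c_1/(2\m)$.

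Dividing the two bounds then gives
\begin{align*}
\text{cond}(B) = \frac{\lambda_{\max}(B)}{\lambda_{\min}(B)} \le \frac{c_1/(2\m)}{c_0\,h/(2\m+d\l)} = \frac{c_1}{c_0}\,\frac{2\m+d\l}{2\m}\,h^{-1},
\end{align*}
which is exactly the claimed estimate with $C = c_1/c_0$, a constant independent of $h$ by the theorem. This is really a one-line corollary once \eqref{condnum2} is in hand, so there is no substantive obstacle; the only point requiring a word of care is making explicit that the relevant Hilbert-space structure on $\X_{h,0}^\G$ for the CG iteration is the one given by $\gnp[\cdot]{\cdot}_\G$ on normal traces, so that the coercivity/continuity constants of $b$ with respect to $\|\cdot\,n\|_\G$ translate verbatim into bounds on the spectrum of $B$.

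If one wished to be even more careful, one could note that CG is in practice applied to the algebraic system for the degrees of freedom of $\l_h$, and that the mass matrix associated with the $\gnp[\cdot]{\cdot}_\G$ inner product on $\X_{h,0}^\G$ is spectrally equivalent (uniformly in $h$, by quasi-uniformity and shape regularity, cf. the scaling arguments used for \eqref{trace-ineq}) to the identity up to the usual $h^{d-1}$ mesh-size factors, which cancel in the condition number; but since the corollary is stated for the operator $B$ on $\X_{h,0}^\G$ equipped with $\gnp[\cdot]{\cdot}_\G$, the computation above suffices and no such elaboration is needed.
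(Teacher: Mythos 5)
Your argument is correct and is exactly the (implicit) one the paper intends: the corollary is stated without proof as an immediate consequence of \eqref{condnum2}, obtained by reading the two-sided bound as Rayleigh-quotient bounds on the extreme eigenvalues of the symmetric positive definite operator $B$ in the $\gnp[\cdot]{\cdot}_{\G}$ inner product on normal traces and taking their ratio, yielding $C = c_1/c_0$. Your remark identifying $\gnp[\l_h]{\l_h}_\G$ with $\|\l_h\,n\|_\G^2$ is the right reading of the norm in \eqref{condnum2} and of the definition of $B$, so nothing further is needed.
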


\section{A multiscale mortar MFE method on non-matching grids}
\label{sec:mmmfe}
\subsection{Formulation of the method}
In this section we allow for the subdomain grids to be non-matching
across the interfaces and employ coarse scale mortar finite elements
to approximate the displacement and impose weakly the continuity of
normal stress. This can be viewed as a non-matching grid extension of
Method 1. The coarse mortar space leads to a less computationally
expensive interface problem. The subdomains are discretized on the
fine scale, resulting in a multiscale approximation. We focus on the
analysis of the multiscale discretization error.

For the subdomain discretizations, assume that
$\X_{h,i}$, $V_{h,i}$, and $\W_{h,i}$ contain polynomials of degrees
up to $k \ge 1$, $l \ge 0$, and $p \ge 0$, respectively. Let
$$
\X_h = \bigoplus_{1\le i \le n}\X_{h,i}, \quad V_h = \bigoplus_{1\le i \le n}V_{h,i},
\quad \W_h = \bigoplus_{1\le i \le n}\W_{h,i},
$$
noting that the normal traces of stresses in $\X_h$ can be discontinuous across
the interfaces. Let $\T_{H,i,j}$ be a shape regular quasi-uniform simplicial or
quadrilateral finite element partition of $\G_{i,j}$ with maximal
element diameter $H$. Denote by $\Lambda_{H,i,j} \subset
L^2(\Gamma_{i,j})$ the mortar finite element space on $\Gamma_{i,j}$,
containing either continuous or discontinuous piecewise polynomials of
degree $m \ge 0$ on $\T_{H,i,j}$. Let
\begin{align*}
\Lambda_H = \bigoplus_{1\le i,j \le n} \Lambda_{H,i,j}.
\end{align*}
be the mortar finite element space on $\Gamma$. Some additional
restrictions are to be made on the mortar space $\Lambda_h$ in the
forthcoming statements.

The multiscale mortar MFE method reads: find
$(\sh{i},\uh{i},\gh{i},\l_H)\in\X_{h,i}\times V_{h,i}\times \W_{h,i} \times \Lambda_H$ such that, for $1\le i\le n$,
\begin{align}
&\inp[A\sh{i}]{\tau}_{\O_i} + \inp[\uh{i}]{\dvr{\tau}}_{\O_i} + \inp[\gh{i}]{\tau}_{\O_i} \nonumber \\
&\qquad\qquad\qquad= \gnp[\lambda_H]{\tau\, n_i}_{\G_i} + \gnp[g_D]{\tau\,n}_{\partial \O_i \cap \G_D}, &\forall \tau &\in \X_{h,i}, \label{dd3-1}\\
&\inp[\dvr \sh{i}]{v}_{\O_i} = \inp[f]{v}_{\O_i}, &\forall v &\in V_{h,i},\label{dd3-2} \\
&\inp[\sh{i}]{\xi}_{\O_i} = 0, &\forall q &\in \W_{h,i}, \label{dd3-3}\\
&\sum_{i=1}^n \gnp[\sh{i}\,n_i]{\mu}_{\G_i} = 0, &\forall \mu &\in \Lambda_H. \label{dd3-4}
\end{align}
Note that $\lambda_H$ approximates the displacement on $\Gamma$ and the last
equation enforces weakly continuity of normal stress on the interfaces.

\begin{lemma}
Assume that for any $\eta\in\Lambda_H$
\begin{align}\label{lambda-space-cond1}
\Q_{h,i}\eta = 0,\quad 1\le i\le n,\quad \mbox{implies that } \eta=0.
\end{align}
Then there exists a unique solution of \eqref{dd3-1}--\eqref{dd3-3}.
\end{lemma}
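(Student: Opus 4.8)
The plan is to establish existence and uniqueness for the finite-dimensional square system \eqref{dd3-1}--\eqref{dd3-4} by proving that the only solution to the homogeneous problem (with $g_D = 0$ and $f = 0$) is the trivial one. Since the system is linear and the number of equations equals the number of unknowns, uniqueness implies existence, so it suffices to show that if $(\sh{i},\uh{i},\gh{i},\l_H)$ solves \eqref{dd3-1}--\eqref{dd3-4} with zero data, then all components vanish.

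First I would test \eqref{dd3-1} with $\tau = \sh{i}$, use \eqref{dd3-2} with $v = \uh{i}$ and \eqref{dd3-3} with $\xi = \gh{i}$ to cancel the $(\uh{i},\dvr\tau)$ and $(\gh{i},\tau)$ terms, and sum over $i$. This yields
\begin{align*}
\sum_{i=1}^n \inp[A\sh{i}]{\sh{i}}_{\O_i} = \sum_{i=1}^n \gnp[\l_H]{\sh{i}\,n_i}_{\G_i} = \gnp[\l_H]{\sum_{i=1}^n \sh{i}\,n_i}_{\G},
\end{align*}
and the right-hand side vanishes by \eqref{dd3-4} since $\l_H \in \Lambda_H$. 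Coercivity of $A$ from \eqref{coer-cont} then gives $\sh{i} = 0$ for all $i$. With $\sh{i} = 0$, equation \eqref{dd3-1} reduces to $\inp[\uh{i}]{\dvr\tau}_{\O_i} + \inp[\gh{i}]{\tau}_{\O_i} = \gnp[\l_H]{\tau\,n_i}_{\G_i}$ for all $\tau\in\X_{h,i}$.

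Next I would extract that $\l_H = 0$. Taking $\tau \in \X_{h,i}^0$ (the subspace with vanishing normal trace on $\G_i$) kills the interface term, and the inf-sup stability of the triple $\X_{h,i}^0 \times V_{h,i} \times \W_{h,i}$ forces $\uh{i} = 0$ and $\gh{i} = 0$. Then the reduced equation becomes $\gnp[\l_H]{\tau\,n_i}_{\G_i} = 0$ for all $\tau\in\X_{h,i}$, i.e. $\gnp[\l_H]{\mu}_{\G_i} = 0$ for all $\mu\in\X_{h,i}n_i = \Lambda_{h,i}$. Since $\Q_{h,i}$ is the $L^2(\G_i)$-projection onto $\Lambda_{h,i}$, this says $\Q_{h,i}\l_H = 0$ for every $i$, and hypothesis \eqref{lambda-space-cond1} then gives $\l_H = 0$. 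Finally, with $\l_H = 0$ and $\sh{i} = 0$, the argument of the previous paragraph (inf-sup) again gives $\uh{i} = 0$, $\gh{i} = 0$, completing the proof.

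The only delicate point is making sure that the inf-sup / surjectivity properties of the subdomain spaces are genuinely available in the form needed: one uses that $\dvr \X_{h,i}^0 = V_{h,i}$ together with the stability of the weak-symmetry triple restricted to $\O_i$ with homogeneous essential boundary condition $\tau n_i = 0$ on $\partial\O_i$, which is exactly the setting in which these elasticity MFE families are stable. Everything else is a direct energy argument; the assumption \eqref{lambda-space-cond1} is precisely what is required to conclude that the mortar multiplier is uniquely determined, so no additional obstacle arises there.
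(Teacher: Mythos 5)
Your opening energy argument (testing with the solution itself and using \eqref{dd3-4} to kill the interface term, then coercivity of $A$ to get $\s_h=0$) is exactly the paper's first step. The gap is in the next step, where you claim that the inf-sup stability of $\X_{h,i}^0\times V_{h,i}\times \W_{h,i}$ forces $\uh{i}=0$ and $\gh{i}=0$. For any subdomain with $\dO_i\cap\G_D=\emptyset$ (a floating subdomain, of which there are many in a general decomposition), restricting the stress test space to $\X_{h,i}^0$ turns the subdomain problem into a pure traction problem, and the inf-sup condition you invoke is false there: for any rigid body motion $\chi\in\RM(\O_i)$, integration by parts gives
\begin{equation*}
\inp[\chi]{\dvr\tau}_{\O_i}+\inp[\skew(\nabla\chi)]{\tau}_{\O_i}
=\gnp[\chi]{\tau\,n_i}_{\dO_i}=0 \quad \forall\,\tau\in\X_{h,i}^0,
\end{equation*}
so the pair $(\chi,\skew(\nabla\chi))$ lies in the kernel of the relevant bilinear form. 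The most your argument can yield on such a subdomain is $\uh{i}\in\RM(\O_i)$ with $\gh{i}=\skew(\nabla\uh{i})$, not $\uh{i}=0$; consequently you cannot yet conclude $\gnp[\l_H]{\tau\,n_i}_{\G_i}=0$ for all $\tau\in\X_{h,i}$, and the appeal to \eqref{lambda-space-cond1} does not go through.

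Eliminating these rigid body modes is precisely the substantive content of the paper's proof, and it cannot be done subdomain-by-subdomain: the paper first shows, via an auxiliary Neumann elasticity problem and the projection $\Pit_i$, that $u_{h,i}$ equals its $L^2$-projection onto $\RM(\O_i)$ and that $\Q_{h,i}\l_H$ lies in $\RM(\O_i)|_{\G_i}$; then, testing with symmetric $\tau$, it identifies $u_{h,i}=\Q_{h,i}\l_H$ on $\G_i$ and $u_{h,i}=0$ on $\dO_i\cap\G_D$; finally it propagates the vanishing from subdomains touching $\G_D$ to their neighbors across each $\G_{i,j}$, using $k\ge 1$ so that linear functions (hence traces of rigid body motions) are admissible test data in $\X_{h,i}n_i$. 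Only after this global sweep does \eqref{lambda-space-cond1} give $\l_H=0$, and the concluding inf-sup argument is applied with the full space $\X_{h,i}$ (a displacement-data problem with no kernel), not with $\X_{h,i}^0$. To repair your proof you would need to add this propagation-from-$\G_D$ mechanism, or some equivalent global argument controlling the rigid body components.
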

\begin{remark}
Condition \eqref{lambda-space-cond1} requires that the mortar
space $\Lambda_H$ cannot be too rich compared to the normal trace of the
stress space. This condition can be easily satisfied in practice, especially
when the mortar space is on a coarse scale.
\end{remark}
\begin{proof}
It suffices to show uniqueness, as \eqref{dd3-1} - \eqref{dd3-4} is a
square linear system. Let $f = 0$ and $g_D = 0$.
Then, by taking $(\t,v,\xi,\m) =
(\s_h,u_h,\g_h,\l_H)$ in \eqref{dd3-1}--\eqref{dd3-4}, we obtain that
$\s_h = 0$. Next, for $1\le i \le n$, let $\overline{u_{h,i}}$ be the
$L^2(\O_i)$-projection of $u_{h,i}$ onto $\RM(\O_i)$ and let
$\overline{\Q_{h,i}\l_H}$ be the $L^2(\G_i)$-projection of $\Q_{h,i}\l_H$
onto $\RM(\O_i)|_{\G_i}$. Consider the auxiliary problem
\begin{align*}
&\psi_i = \epsilon(\phi_i) & &\text{in }\O_i, \\
&\dvr \psi_i = u_{h,i}-\overline{u_{h,i}} & &\text{in } \O_i, \\
&\psi_i\,n_i =
\begin{cases}
-(\Q_{h,i}\l_H - \overline{\Q_{h,i}\l_H}) &\text{on } \G_i \\
0 &\text{on } \dO_i\cap\dO,
\end{cases}
\end{align*}
which is solvable and $\phi$ is determined up to an element of
$\RM(\O_i)$. Now, setting $\t = \Pit_i\psi_i$ in \eqref{dd3-1} and using
\eqref{prop-Pit}, we obtain
\begin{align*}
\inp[u_{h,i}]{u_{h,i} - \overline{u_{h,i}}}_{\O_i}
+ \gnp[\Q_{h,i}\l_H]{\Q_{h,i}\l_H-\overline{\Q_{h,i}\l_H}}_{\G_i} &= 0,
\end{align*}
which implies
$u_{h,i}= \overline{u_{h,i}}$ and $\Q_{h,i}\l_H = \overline{\Q_{h,i}\l_H}$.
Taking $\tau$ to be a symmetric
matrix in \eqref{dd3-1} and integrating by parts gives
$$
-\inp[\epsilon(u_{h,i})]{\tau}_{\O_i}
+ \gnp[u_{h,i} - \l_H]{\tau\, n_i}_{\G_i}
+ \gnp[u_{h,i}]{\tau\, n_i}_{\dO_i\cap\G_D}
= 0.
$$
The first term above is zero, since $u_{h,i} \in \RM(\O_i)$.  Then the last
two terms imply that $u_{h,i}= \Q_{h,i}\l_H$ on $\G_i$ and
$u_{h,i}= 0$ on $\dO_i\cap\G_D$, since
$\RM(\O_i)|_{\dO_i} \in \X_{h,i} n_i$. Using that
$u_{h,i} \in \RM(\O_i)$, this implies that for subdomains $\O_i$
such that $\meas(\dO_i\cap\G_D) > 0$, $u_{h,i} = \Q_{h,i}\l_H = 0$.
Consider any subdomain $\O_j$ such that
$\dO_i\cap\dO_j = \G_{i,j} \neq \emptyset$. Recalling that $k \ge 1$, we have that
for all linear functions $\varphi$ on $\Gamma_{i,j}$,
$$
0 = \gnp[\Q_{h,i}\l_H]{\varphi}_{\G_{i,j}} = \gnp[\l_H]{\varphi}_{\G_{i,j}}
= \gnp[\Q_{h,j}\l_H]{\varphi}_{\G_{i,j}},
$$
which implies that $\Q_{h,j}\l_H = 0$ on $\dO_j$, since
$\Q_{h,j}\l_H \in \RM(\O_j)|_{\dO_j}$. Repeating the above argument
for the rest of the subdomains, we conclude that $\Q_{h,i}\l_H = 0$ and
$u_{h,i} = 0$ for $1\le i \le n$. The hypothesis \eqref{lambda-space-cond1}
implies that $\l_H = 0$. It remains to show that $\g_h = 0$. The stability of
$\X_{h,i}\times V_{h,i} \times \W_{h,i}$ implies an inf-sup condition, which,
along with \eqref{dd3-1}, yields
\begin{align*}
C(\|u_{h,i}\|_{\O_i} + \|\g_{h,i} \|_{\O_i})
&\le \sup_{\t\in\X_{h,i}} \frac{\inp[u_{h,i}]{\dvr\t}_{\O_i}
+ \inp[\g_{h,i}]{\t}_{\O_i}}{\|\t\|_{H(\dvr;\O_i)}} \\
&= \sup_{\t\in\X_{h,i}} \frac{-\inp[A\s_{h,i}]{\t}_{\O_i}
+ \gnp[\l_H]{\t\,n}_{\G_i}}{\|\t\|_{H(\dvr;\O_i)}} = 0,
\end{align*}
implying $\g_h = 0$.
\end{proof}

\subsection{The space of weakly continuous stresses}
We start by introducing some interpolation or projection operators and
discussing their approximation properties. Recall the projection
operators introduced earlier: $\Pi_i$ - the mixed projection operator
onto $\X_{h,i}$, $\Pit_i$ - the elliptic projection operator onto
$\X_{h,i}$, $\P_{h,i}$ - the $L^2(\O_i)$-projection onto $V_{h,i}$,
$\Rc_{h,i}$ - the $L^2(\O_i)$-projection onto $\W_{h,i}$, and
$\Q_{h,i}$ - the $L^2(\O_i)$-projection onto $\X_{h,i} n_i$. In
addition, let $\IHc$ be the Scott-Zhang interpolation operator \cite{Scott-Zhang}
into the space $\Lambda_H^c$, which is the subset of continuous functions in
$\Lambda_H$, and let $\P_H$ be the $\L2(\G)$-projection onto
$\Lambda_H$. Recall that the polynomial degrees in the spaces
$\X_{h,i}$, $V_{h,i}$, $\W_{h,i}$, and $\Lambda_H$ are $k \ge 1$, $l
\ge 0$, $p \ge 0$, and $m \ge 0$, respectively, assuming
for simplicity that the order of approximation is the same on every
subdomain. the projection/interpolation operators have the approximation
properties:
\begin{align}
&\|\eta - \IHc \eta \|_{t,\G_{i,j}} \le CH^{s-t}\|\eta\|_{s,\G_{i,j}}, &&1 \le s \le m+1,\:0\le t\le 1, \label{app-prop0}\\
&\|\eta - \P_H \eta \|_{-t,\G_{i,j}} \le CH^{s+t}\|\eta\|_{s,\G_{i,j}}, &&0 \le s \le m+1,\:0\le t\le 1, \label{app-prop1}\\
&\|w-P_{h,i}w\|_{\O_i} \le Ch^t\|w\|_{t,\O_i}, &&0\le t \le l+1, \label{app-prop2}\\
&\|\dvr(\t-\Pit_i\t)\|_{0,\O_i} \le Ch^t\|\dvr\t\|_{t,\O_i}, &&0\le t \le l+1 \label{app-prop4}\\
&\|\xi-\Rc_{h,i}\|_{\O_i}\le Ch^q\|w\|_{q,\O_i}, &&0\le q \le p+1, \label{app-prop22}\\
&\|\t-\Pit_i\t\|_{\O_i} \le Ch^r\|\t\|_{r,\O_i}, &&1\le r \le k+1, \label{app-prop3}\\
&\|\eta-\Q_{h,i}\eta\|_{-t,\G_{i,j}} \le Ch^{r+t}\|\eta\|_{r,\G_{i,j}}, &&0\le r\le k+1,\: 0\le t\le k+1, \label{app-prop5}\\
&\|(\t-\Pit_i\t)\,n_i\|_{-t,\G_{i,j}} \le Ch^{r+t}\|\t\|_{r,\G_{i,j}}, && 0\le r \le k+1,\: 0\le t\le k+1.\label{app-prop6}
\end{align}
Bound \eqref{app-prop0} can be found in \cite{Scott-Zhang}.
Bounds \eqref{app-prop1}--\eqref{app-prop22} and
\eqref{app-prop5}--\eqref{app-prop6} are well known
$\L2$-projection approximation results \cite{ciarlet2002finite}.
Bound \eqref{app-prop3} follows from \eqref{prop-elliptic-proj} and a similar
bound for $\Pi_i$, which can be found, e.g., in
\cite{brezzi1991mixed,roberts1991mixed}.

We will use the trace inequalities \cite[Theorem 1.5.2.1]{grisvard2011elliptic}
\begin{align}
\|\eta\|_{r,\G_{i,j}} \le C\|\eta\|_{r+1/2,\O_i},\quad r>0
\label{trace-ineq-nonst}
\end{align}
and \cite{brezzi1991mixed,roberts1991mixed}
\begin{align}\label{usefullbound}
\gnp[\eta]{\t\,n}_{\dO_i} \le C\|\eta\|_{1/2,\dO_i}\|\t\|_{H(\dvr;\O_i)}.
\end{align}

We now introduce the space of weakly continuous stresses
with respect to the mortar space,
\begin{align}
\X_{h,0} = \left\{ \t\in\X_h : \sum_{i=1}^n \gnp[\t_in_i]{\m}_{\G_i} = 0
\quad\forall\m\in\Lambda_H \right\}.
\end{align}
Then the mixed method \eqref{dd3-1}--\eqref{dd3-4} is equivalent to:
find $(\s_h,u_h,\g_h)\in\X_{h,0}\times V_h\times \W_h$ such that
\begin{align}
&\inp[A\s_h]{\tau}_{\O_i} + \sum_{i=1}^n\inp[u_h]{\dvr{\tau}}_{\O_i} + \sum_{i=1}^n\inp[\g_h]{\tau}_{\O_i} = \gnp[g_D]{\tau\,n}_{\G_D}, &\forall \tau &\in \X_{h,0}, \label{dd3-1-w}\\
&\sum_{i=1}^n\inp[\dvr \s_h]{v}_{\O_i} = \inp[f]{v}, &\forall v &\in V_{h},\label{dd3-2-w} \\
&\sum_{i=1}^n\inp[\s_h]{\xi}_{\O_i} = 0, &\forall q &\in \W_{h} \label{dd3-3-w}.
\end{align}
We note that the above system will be used only for the purpose of the analysis.
We next construct a projection operator $\Pit_0$ onto $\X_{h,0}$ with optimal
approximation properties.
The construction follows closely the approach in \cite{arbogast2000mixed,APWY}. Define
\begin{align*}
\X_h\,n = \big\{ (\eta_L,\eta_R) &\in L^2(\G,\R^d)\times L^2(\G,\R^d) : \\
&\eta_L\big|_{\G_{i,j}}\in\X_{h,i}\,n_i, \, \eta_R\big|_{\G_{i,j}}\in\X_{h,j}\,n_j
\quad \forall \, 1\le i< j \le n \big\}
\end{align*}
and
\begin{align*}
\X_{h,0}\,n = \big\{ (&\eta_L,\eta_R) \in L^2(\G,\R^d)\times L^2(\G,\R^d) : \exists \t\in\X_{h,0}\mbox{ such that } \\
&\eta_L\big|_{\G_{i,j}} = \t_i n_i \mbox{ and }
\eta_R\big|_{\G_{i,j}} = \t_j n_j \quad \forall \, 1\le i< j \le n \big\}.
\end{align*}
For any $\eta = (\eta_L,\eta_R)\in \left(L^2(\G,\R^d)\right)^2$ we
write $\eta\big|_{\G_{i,j}} = (\eta_i,\eta_j)$, $1\le i<j\le
n$. Define the $L^2$-projection $\Q_{h,0}: \left(L^2(\G,\R^d)\right)^2 \to
\X_{h,0}\,n$ such that, for any $\eta \in \left(L^2(\G,\R^d)\right)^2$,
\begin{align}
\sum_{i=1}^n\gnp[\eta_i - (\Q_{h,0}\eta)_i]{\phi_i}_{\G_i} = 0,
\quad \forall \, \phi\in \X_{h,0}\,n. \label{qh0-def}
\end{align}
\begin{lemma}
Assume that \eqref{lambda-space-cond1} holds. Then, for any
$\eta \in \left(L^2(\G,\R^d)\right)^2$, there exists
$\lambda_H\in\Lambda_H$ such that on $\G_{i,j}$, $1 \le i\le j\le n$,
\begin{align}
&\Q_{h,i}\l_H = \Q_{h,i}\eta_i - (\Q_{h,0}\eta)_i, \label{qhi-prop-1}\\
&\Q_{h,j}\l_H = \Q_{h,j}\eta_j - (\Q_{h,0}\eta)_j, \label{qhi-prop-2}\\
&\gnp[\lambda_H]{\chi}_{\G_{i,j}} =
\frac12\gnp[\eta_i+\eta_j]{\chi}_{\G_{i,j}},  \,\, \forall \, \chi \in
\RM(\O_i \cup\O_j)|_{\G_{i,j}}.
\label{lh-eta-prop}
\end{align}
\end{lemma}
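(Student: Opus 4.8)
The plan is to construct $\l_H$ directly as the solution of a finite-dimensional linear system on $\G$, and then verify the three asserted properties. First I would define, on each interface $\G_{i,j}$, the target data $\psi_{i,j} := \Q_{h,i}\eta_i - (\Q_{h,0}\eta)_i$ and $\psi_{j,i} := \Q_{h,j}\eta_j - (\Q_{h,0}\eta)_j$; these live in $\X_{h,i}n_i$ and $\X_{h,j}n_j$ respectively. The key observation, to be extracted from the definition \eqref{qh0-def} of $\Q_{h,0}$ by testing against $\phi\in\X_{h,0}n$, is that the ``jump'' $\psi_{i,j}, \psi_{j,i}$ of $\eta - \Q_{h,0}\eta$ is orthogonal to $\X_{h,0}n$ in the sense that $\sum_{i} \gnp[(\Q_{h,i}\eta_i - (\Q_{h,0}\eta)_i)]{\phi_i}_{\G_i} = 0$ for all $\phi\in\X_{h,0}n$; equivalently, the pair of data lies in the orthogonal complement of $\X_{h,0}n$ inside $\X_h n$. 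One then wants to see that this orthogonal complement is exactly the set of pairs of the form $(\Q_{h,i}\l_H, \Q_{h,j}\l_H)$ for $\l_H\in\Lambda_H$; this identification, together with surjectivity onto that complement, yields an $\l_H$ satisfying \eqref{qhi-prop-1}--\eqref{qhi-prop-2}.

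To make this precise I would argue as follows. Consider the linear map $T:\Lambda_H \to \bigoplus_{i<j}(\X_{h,i}n_i\times\X_{h,j}n_j)$ defined by $T\l_H\big|_{\G_{i,j}} = (\Q_{h,i}\l_H, \Q_{h,j}\l_H)$. By hypothesis \eqref{lambda-space-cond1}, $T$ is injective. The image of $T$ is a subspace of $\X_h n$; I claim it equals the orthogonal complement $(\X_{h,0}n)^\perp$ in $\X_h n$ (with the $L^2(\G)$ inner product summed as in \eqref{qh0-def}). Indeed, $\phi\in\X_{h,0}n$ iff $\sum_i\gnp[\phi_i]{\m}_{\G_i}=0$ for all $\m\in\Lambda_H$ (this is the normal-trace characterization of $\X_{h,0}$), which after replacing $\m$ by its projections reads $\sum_i\gnp[\phi_i]{\Q_{h,i}\m}_{\G_i}=0$, i.e. $\phi\perp \operatorname{Im}T$. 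Hence $\operatorname{Im}T = (\X_{h,0}n)^\perp$, and since the pair $(\psi_{i,j},\psi_{j,i})$ lies in $(\X_{h,0}n)^\perp$ by the orthogonality noted above, there is a (unique, by injectivity of $T$) $\l_H$ with $T\l_H = (\psi_{i,j},\psi_{j,i})_{i<j}$, which is exactly \eqref{qhi-prop-1}--\eqref{qhi-prop-2}.

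For \eqref{lh-eta-prop}, I would use that $\RM(\O_i\cup\O_j)|_{\G_{i,j}} \subset \X_{h,i}n_i$ and $\subset \X_{h,j}n_j$ (since $k\ge1$, rigid body motions are reproduced by the normal traces on each side), so that for $\chi\in\RM(\O_i\cup\O_j)|_{\G_{i,j}}$, $\gnp[\l_H]{\chi}_{\G_{i,j}} = \gnp[\Q_{h,i}\l_H]{\chi}_{\G_{i,j}} = \gnp[\Q_{h,j}\l_H]{\chi}_{\G_{i,j}}$. Averaging the two expressions and substituting \eqref{qhi-prop-1}--\eqref{qhi-prop-2} gives $\gnp[\l_H]{\chi}_{\G_{i,j}} = \tfrac12\gnp[\Q_{h,i}\eta_i + \Q_{h,j}\eta_j]{\chi}_{\G_{i,j}} - \tfrac12\gnp[(\Q_{h,0}\eta)_i + (\Q_{h,0}\eta)_j]{\chi}_{\G_{i,j}}$; the first pair of terms equals $\tfrac12\gnp[\eta_i+\eta_j]{\chi}_{\G_{i,j}}$ since $\chi\in\X_{h,i}n_i\cap\X_{h,j}n_j$, and the second pair vanishes because $\Q_{h,0}\eta\in\X_{h,0}n$ and $\chi$, being a single-valued rigid body motion on $\G_{i,j}$ that is a normal trace on both sides, pairs with $\X_{h,0}n$ through \eqref{qh0-def} tested against the admissible $\phi$ with $\phi_i=\phi_j=\chi$ (using that such $\chi$ extends to an element of $\X_{h,0}$, e.g. via the rigid body motion on $\O_i\cup\O_j$ and zero elsewhere)—more carefully, one uses $\sum\gnp[(\Q_{h,0}\eta)_i]{\chi}_{\G_{i,j}}$ with $\Q_{h,0}\eta\in\X_{h,0}n$ together with the weak-continuity constraint defining $\X_{h,0}$.

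The main obstacle I anticipate is the last point: cleanly justifying that the rigid-body-motion terms coming from $\Q_{h,0}\eta$ cancel in \eqref{lh-eta-prop}, i.e. showing that a rigid body motion on $\O_i\cup\O_j$, restricted to $\G_{i,j}$ and regarded as a two-sided normal trace, is ``weakly continuous'' in the sense of testing against $\Lambda_H$ and hence pairs correctly with $\X_{h,0}n$. This requires knowing that $\Lambda_H$ contains (or that $\Q_{h,i}$ reproduces) the relevant rigid motions, or an argument that circumvents this using only \eqref{qh0-def}; I would handle it by exhibiting an explicit element of $\X_{h,0}$ whose normal trace on $\G_{i,j}$ is the given $\chi$ on both sides and which vanishes on all other interfaces, then applying \eqref{qh0-def}. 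Everything else is linear algebra and the projection identities already available.
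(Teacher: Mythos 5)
Your treatment of \eqref{qhi-prop-1}--\eqref{qhi-prop-2} is correct, and it is worth noting that the paper itself does not prove this lemma at all: it only cites \cite[Lemma 3.1]{arbogast2000mixed}. Your identification of the image of $T:\m\mapsto(\Q_{h,i}\m,\Q_{h,j}\m)$ with the orthogonal complement of $\X_{h,0}\,n$ inside $\X_h\,n$ (so that $\l_H$ is in effect the Lagrange multiplier of the constrained $L^2$-projection $\Q_{h,0}$) is a faithful reconstruction of that cited argument, and the injectivity of $T$ from \eqref{lambda-space-cond1} correctly gives uniqueness.

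The genuine gap is in \eqref{lh-eta-prop}, exactly at the step you flagged. After averaging the two identities you must show $\gnp[(\Q_{h,0}\eta)_i + (\Q_{h,0}\eta)_j]{\chi}_{\G_{i,j}} = 0$, and your proposed justification does not deliver this. First, a pair with $\phi_i=\phi_j=\chi$ on $\G_{i,j}$ (extended by zero) is in general \emph{not} an element of $\X_{h,0}\,n$: since $n_i=-n_j$, the weak-continuity constraint defining $\X_{h,0}$ forces the two one-sided traces to pair against $\Lambda_H$ with opposite signs, so the admissible local test pair is $(\chi,-\chi)$, not $(\chi,\chi)$. Second, \eqref{qh0-def} is an orthogonality statement about the projection \emph{error} $\eta-\Q_{h,0}\eta$, not about $\Q_{h,0}\eta$ itself; inserting the legitimate pair $(\chi,-\chi)$ into \eqref{qh0-def} controls the difference $\gnp[(\Q_{h,0}\eta)_i-(\Q_{h,0}\eta)_j]{\chi}_{\G_{i,j}}$, which is not the quantity you need. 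The correct mechanism, which cannot be circumvented, is the one you mention only in passing and hoped to avoid: a rigid body motion restricted to the flat interface $\G_{i,j}$ is an affine vector field, hence lies in $\Lambda_{H,i,j}$ provided $m\ge 1$; extending it by zero to an element $\m\in\Lambda_H$ and applying the constraint $\sum_k\gnp[(\Q_{h,0}\eta)_k]{\m}_{\G_k}=0$, satisfied because $\Q_{h,0}\eta\in\X_{h,0}\,n$, gives exactly the vanishing of the sum on $\G_{i,j}$. For $m=0$ this covers only constant $\chi$, which is the original statement in \cite{arbogast2000mixed}; the extension to all of $\RM(\O_i\cup\O_j)|_{\G_{i,j}}$ is precisely the ``straightforward modification'' the paper alludes to, and it genuinely requires the mortar space to contain these affine traces. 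With that replacement your argument closes; as written, the cancellation is unjustified.
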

\begin{proof}
The proof is given in \cite[Lemma 3.1]{arbogast2000mixed} with a straightforward modification
to show \eqref{lh-eta-prop} for $\chi \in \RM(\O_i \cup\O_j)|_{\G_{i,j}}$, rather
than for constants.
\end{proof}

The next lemma shows that, under a relatively mild assumption on the mortar
space $\Lambda_H$, $\Q_{h,0}$ has optimal approximation properties.
\begin{lemma} \label{lambda-space-cond2}
Assume that there exists a constant $C$, independent of $h$ and $H$, such that
\begin{align}
\|\m\|_{\G_{i,j}} \le C(\|\Q_{h,i}\m\|_{\G_{i,j}} + \|\Q_{h,j}\m\|_{\G_{i,j}}) \quad
\forall \m\in\Lambda_H, \quad 1\le i< j\le n \label{lambda-cond-eq}.
\end{align}
Then for any $\eta \in \left(L^2(\G,\R^d)\right)^2$ such that
$\eta\big|_{\G_{i,j}} = (\eta_i,-\eta_i)$,
there exists a constant $C$, independent of $h$ and $H$ such that
\begin{align}\label{Qh0-bound}
\begin{aligned}
&\left( \sum_{1\le i< j \le n} \| \Q_{h,i}\eta_i-(\Q_{h,0}\eta)_i \|^2_{-s,\G_{i,j}} \right)^{1/2}\le C\sum_{1\le i<j \le n} h^{r}H^{s} \|\eta_i\|_{r,\G_{i,j}}, \\
&\qquad\qquad
0\le r\le k+1, \: 0\le s\le k+1.
\end{aligned}
\end{align}
\end{lemma}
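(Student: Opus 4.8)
The plan is to follow the construction of \cite[Lemma~3.2]{arbogast2000mixed}, see also \cite{APWY}, the only new ingredient being that here the polynomial space that can be subtracted for free on each interface is the space of rigid body motions $\RM(\O_i\cup\O_j)|_{\G_{i,j}}$ rather than the constants; this upgrade is exactly what the preceding lemma supplies. First note that \eqref{lambda-cond-eq}, applied on a single $\G_{i,j}$ to $\m\in\Lambda_H$ supported there, immediately gives \eqref{lambda-space-cond1}, so the preceding lemma is available. Fix $\G_{i,j}$ and set $\zeta_i := \Q_{h,i}\eta_i - (\Q_{h,0}\eta)_i$ and $\zeta_j := \Q_{h,j}\eta_j - (\Q_{h,0}\eta)_j$ on $\G_{i,j}$. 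Since $\eta_j = -\eta_i$, the preceding lemma yields $\l_H\in\Lambda_H$ with $\Q_{h,i}\l_H = \zeta_i$ and $\Q_{h,j}\l_H = \zeta_j$ on $\G_{i,j}$ (by \eqref{qhi-prop-1}--\eqref{qhi-prop-2}) and $\gnp[\l_H]{\chi}_{\G_{i,j}} = 0$ for all $\chi\in\RM(\O_i\cup\O_j)|_{\G_{i,j}}$ (by \eqref{lh-eta-prop}, using $\eta_i+\eta_j = 0$).

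For $s=0$ I would argue as follows. Since $(\Q_{h,0}\eta)_i\in\X_{h,i}\,n_i$ on $\G_{i,j}$, we have $\zeta_i = \Q_{h,i}\big(\eta_i - (\Q_{h,0}\eta)_i\big)$, so $\|\zeta_i\|_{\G_{i,j}}\le\|\eta_i-(\Q_{h,0}\eta)_i\|_{\G_{i,j}}$, and likewise for $\zeta_j$. As $\Q_{h,0}$ is the $L^2$-projection onto $\X_{h,0}\,n$ and the weak-continuity constraint decouples across interfaces, it suffices to exhibit on each $\G_{i,j}$ a pair $\phi_i\in\X_{h,i}\,n_i$, $\phi_j\in\X_{h,j}\,n_j$ with $\gnp[\phi_i+\phi_j]{\m}_{\G_{i,j}}=0$ for all $\m\in\Lambda_H$ and $\|\eta_i-\phi_i\|_{\G_{i,j}}+\|\eta_j-\phi_j\|_{\G_{i,j}}\le Ch^r\|\eta_i\|_{r,\G_{i,j}}$; summing the squares and using best approximation then gives \eqref{Qh0-bound} for $s=0$. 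I would take $\phi_i = \Q_{h,i}\eta_i - \Q_{h,i}\kappa$, $\phi_j = \Q_{h,j}\eta_j - \Q_{h,j}\kappa$, with $\kappa\in\Lambda_H$ chosen so that weak continuity holds, i.e. $\gnp[\Q_{h,i}\kappa+\Q_{h,j}\kappa]{\m}_{\G_{i,j}} = \gnp[(\Q_{h,i}-\Q_{h,j})\eta_i]{\m}_{\G_{i,j}}$ for all $\m\in\Lambda_H$. The form $\kappa\mapsto\|\Q_{h,i}\kappa\|^2_{\G_{i,j}}+\|\Q_{h,j}\kappa\|^2_{\G_{i,j}}$ is coercive on $\Lambda_H$ precisely by \eqref{lambda-cond-eq}, so such a $\kappa$ exists and is bounded by the norm of the right-hand-side functional, which is of higher order in $h$ since a coarse mortar $\m$ is reproduced by $\Q_{h,i}$ and $\Q_{h,j}$ up to a higher-order term; combined with $\|\eta_i-\Q_{h,i}\eta_i\|_{\G_{i,j}}\le Ch^r\|\eta_i\|_{r,\G_{i,j}}$ (valid for $0\le r\le k+1$ because $\X_{h,i}\,n_i$ contains polynomials of degree $k$), this gives the required bound.

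For $0<s\le k+1$ I would use duality: $\|\zeta_i\|_{-s,\G_{i,j}}=\sup\{\gnp[\zeta_i]{\theta}_{\G_{i,j}}:\|\theta\|_{s,\G_{i,j}}\le1\}$, and, writing $\zeta_i=\Q_{h,i}\l_H$, $\gnp[\zeta_i]{\theta}_{\G_{i,j}}=\gnp[\l_H]{\Q_{h,i}\theta}_{\G_{i,j}} = \gnp[\Q_{h,i}\l_H-\l_H]{\theta}_{\G_{i,j}}+\gnp[\l_H]{\theta}_{\G_{i,j}}$. The first term is bounded using the negative-norm approximation property \eqref{app-prop5} for $\l_H$, $\|\Q_{h,i}\l_H-\l_H\|_{-s,\G_{i,j}}\le Ch^{s}\|\l_H\|_{\G_{i,j}}$, together with the $s=0$ estimate $\|\l_H\|_{\G_{i,j}}\le C(\|\zeta_i\|_{\G_{i,j}}+\|\zeta_j\|_{\G_{i,j}})$ coming from \eqref{lambda-cond-eq}; for the second term one uses $\gnp[\l_H]{\chi}_{\G_{i,j}}=0$ for $\chi\in\RM(\O_i\cup\O_j)|_{\G_{i,j}}$ and the coarse-scale approximation and negative-norm bounds \eqref{app-prop0}--\eqref{app-prop1} for $\theta$ against the mortar scale, together with an inverse inequality on the coarse mesh, which is what produces the factor $H^s$. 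Squaring, summing over $1\le i<j\le n$, and inserting the $s=0$ bound on $\sum(\|\zeta_i\|^2+\|\zeta_j\|^2)$ completes the proof.

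I expect the main obstacle to be the non-matching grids: since $\Q_{h,i}\ne\Q_{h,j}$ on $\G_{i,j}$, the naive local projection $(\Q_{h,i}\eta_i,\Q_{h,j}\eta_j)$ is not weakly continuous, and the whole estimate hinges on showing that the correction restoring weak continuity is genuinely of higher order. Assumption \eqref{lambda-cond-eq} is the inf-sup type tool that makes the correction problem solvable with an $h,H$-uniform bound, and the delicate part is the bookkeeping needed to keep the fine scale $h^r$ and the mortar scale $H^s$ in the correct factors simultaneously; this is where following \cite{arbogast2000mixed,APWY} closely is essential.
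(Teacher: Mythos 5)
Your overall plan is the right one: the paper itself proves this lemma only by citation to \cite[Lemma 3.2]{arbogast2000mixed} ``with a straightforward modification for the two scales,'' and reconstructing that argument via the $\l_H$ supplied by the preceding lemma is exactly what is intended. Your $s=0$ step is essentially sound: best approximation in $\X_{h,0}\,n$ together with a correction $\kappa\in\Lambda_H$ obtained from the form $\kappa\mapsto\|\Q_{h,i}\kappa\|^2_{\G_{i,j}}+\|\Q_{h,j}\kappa\|^2_{\G_{i,j}}$, whose coercivity is precisely \eqref{lambda-cond-eq}. One imprecision: the smallness of $\kappa$ comes from $\gnp[(\Q_{h,i}-\Q_{h,j})\eta_i]{\m}_{\G_{i,j}}=\gnp[\Q_{h,i}\eta_i-\eta_i]{\m-\Q_{h,i}\m}_{\G_{i,j}}-\gnp[\Q_{h,j}\eta_i-\eta_i]{\m-\Q_{h,j}\m}_{\G_{i,j}}\le Ch^{r}\|\eta_i\|_{r,\G_{i,j}}\|\m\|_{\G_{i,j}}$, i.e.\ from the orthogonality of $\eta_i-\Q_{h,i}\eta_i$ to the range of $\Q_{h,i}$, not from a coarse mortar $\m$ being ``reproduced by $\Q_{h,i}$ up to a higher-order term'' (a generic $\m\in\Lambda_H$ is not well approximated by its fine-scale projections).

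The genuine gap is in the negative-norm case: the term $\gnp[\l_H]{\theta}_{\G_{i,j}}$ cannot produce the factor $H^{s}$ by the tools you invoke. The orthogonality $\gnp[\l_H]{\chi}_{\G_{i,j}}=0$ holds only for $\chi\in\RM(\O_i\cup\O_j)|_{\G_{i,j}}$ globally on $\G_{i,j}$, not elementwise on $\T_{H,i,j}$, so subtracting the best $\RM$ approximation of $\theta$ gives $\|\theta-\chi\|_{0,\G_{i,j}}\le C\,\mathrm{diam}(\G_{i,j})^{\min(s,1)}\|\theta\|_{s,\G_{i,j}}$ with $\mathrm{diam}(\G_{i,j})=O(1)$ --- no power of $H$ --- while an inverse inequality on the coarse mesh bounds $\|\cdot\|_{0}$ by $H^{-s}\|\cdot\|_{-s}$, the wrong direction. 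In fact, for a generic $\l_H\in\Lambda_H$ that is merely $\RM$-orthogonal on $\G_{i,j}$ one has $\|\l_H\|_{-s,\G_{i,j}}\sim\|\l_H\|_{0,\G_{i,j}}$, so no argument using only the membership $\l_H\in\Lambda_H$, its $L^2$ size, and global $\RM$-orthogonality can yield $H^s$. The mechanism that does work is to return to $\eta_i-(\Q_{h,0}\eta)_i$ and split its pairing with $\theta$ into the interface sum and difference: since $\eta_i+\eta_j=0$ and $(\Q_{h,0}\eta)_i+(\Q_{h,0}\eta)_j$ is $L^2(\G_{i,j})$-orthogonal to \emph{all} of $\Lambda_H$, the sum contributes $\gnp[(\Q_{h,0}\eta)_i+(\Q_{h,0}\eta)_j]{\theta-\P_H\theta}_{\G_{i,j}}\le Ch^{r}\|\eta_i\|_{r,\G_{i,j}}\cdot H^{s}\|\theta\|_{s,\G_{i,j}}$ by \eqref{app-prop1} --- this is the sole source of $H^s$; the difference is controlled by testing the orthogonality $\eta-\Q_{h,0}\eta\perp\X_{h,0}\,n$ against a weakly continuous approximation of the antisymmetric pair $(\theta,-\theta)$, which by the $s=0$ case applied to that pair yields a factor $h^{s}\le H^{s}$; the remaining piece $\gnp[\Q_{h,i}\eta_i-\eta_i]{\theta}_{\G_{i,j}}$ is handled by \eqref{app-prop5} as you do. Without this sum/difference decomposition exploiting weak continuity against the whole mortar space, the stated $h^{r}H^{s}$ bound does not follow.
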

\begin{proof}
The proof is given in \cite[Lemma 3.2]{arbogast2000mixed} with a straightforward modification
for the two scales $h$ and $H$.
\end{proof}

\begin{remark}
The condition \eqref{lambda-cond-eq} is related to
\eqref{lambda-space-cond1} and it requires that the mortar space
$\Lambda_H$ is controlled by its projections onto the normal traces of
stress spaces with a constant independent of the mesh size. It can be satisfied
for fairly general mesh configurations, see
\cite{arbogast2000mixed,APWY,pencheva2003balancing}.
\end{remark}

We are now ready to construct the projection operator onto $\X_{h,0}$.

\begin{lemma}\label{proj-lemma}
Under assumption \eqref{lambda-cond-eq}, there exists a projection operator
$\Pit_0: H^{1/2+\epsilon}(\O,\M)\cap \X \to \X_{h,0}$ such that
\begin{align}
& \inp[\dvr(\Pit_0\t - \t)]{v}_{\O_i} = 0, &&v \in V_{h,i}, \,\, 1 \le i \le n, \label{pit0-prop-1}\\
& \inp[\Pit_0 \t - \t]{\xi} = 0, && \xi \in \W_h , \label{pit0-prop-2}\\
& \|\Pit_0\t\| \le C (\|\t\|_{1/2 + \e} + \|\dvr \t\|), &&\label{pit0-cont}\\
& \|\Pit_0\t - \Pit\t\| \le Ch^{r}H^{1/2}\|\t\|_{r+1/2},
&& 0 < r\le k+1, \label{app-prop-pit0-1} \\
&\|\Pit_0\t - \t\| \le C \left(h^{t}\|\t\|_{t} +
h^r H^{1/2}\|\t\|_{r+1/2}\right), && 1 \le t \le k+1, \,\,
0 < r\le k+1. \label{app-prop-pit0-2}
\end{align}
\end{lemma}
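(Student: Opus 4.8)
The plan is to build $\Pit_0$ by correcting the elementwise elliptic projection $\Pit$ with a divergence-free, weakly continuous stress that repairs the jump in normal trace across the interfaces. Given $\t \in H^{1/2+\epsilon}(\O,\M)\cap\X$, first set $\t^* = \Pit\t$ on each subdomain (here $\Pit|_{\O_i} = \Pit_i$), which is well defined by \eqref{eps-ineq-pit} and satisfies the local commuting and orthogonality properties \eqref{prop-Pit}. Its normal trace jump on $\G_{i,j}$ is $((\Pit_i\t)n_i, (\Pit_j\t)n_j) = (\Q_{h,i}(\t n_i), \Q_{h,j}(\t n_j))$, and since $\t n_i + \t n_j = 0$ on $\G_{i,j}$, this pair is of the form $(\eta_i, -\eta_i)$ plus a quantity controlled by $\Q_{h,0}$. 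Apply the $L^2$-projection $\Q_{h,0}$ of \eqref{qh0-def} to $((\Pit_i\t)n_i)_i$ to obtain a weakly continuous normal-trace datum; the difference $(\Pit_i\t)n_i - (\Q_{h,0}(\Pit\t\cdot n))_i =: \delta_i$ is exactly the correction we must absorb, and by Lemma~\ref{lambda-space-cond2} (with $r=0$, small $s$) it is small in the appropriate negative norm.

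Next I would define the correction $\t_i^{\mathrm{corr}}\in\X_{h,i}$ as the stress part of the local Neumann-type mixed solve with zero divergence, zero rotation data, and normal trace $-\delta_i$ on $\G_i$, zero on $\dO_i\cap\dO$; this is precisely the operator $\ss_{h,i}(\cdot)$ from \eqref{star2-1}--\eqref{star2-4}, so $\dvr\t_i^{\mathrm{corr}}=0$ and $\inp[\t_i^{\mathrm{corr}}]{\xi}_{\O_i}=0$ for $\xi\in\W_{h,i}$. Because $(\delta_i)_i$ is, by construction of $\Q_{h,0}$, orthogonal to $\X_{h,0}\,n$, the resulting $\t^{\mathrm{corr}}$ has normal trace that makes $\Pit_0\t := \Pit\t + \t^{\mathrm{corr}}$ lie in $\X_{h,0}$: indeed $(\Pit_0\t)n_i + (\Pit_0\t)n_j = \Q_{h,0}$-image on each $\G_{i,j}$, which pairs to zero against all $\m\in\Lambda_H$ by the defining property \eqref{qh0-def} together with the compatibility encoded in \eqref{qhi-prop-1}--\eqref{qhi-prop-2}. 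Properties \eqref{pit0-prop-1} and \eqref{pit0-prop-2} then follow immediately from the corresponding properties of $\Pit$ and the fact that the correction is divergence-free and $\W_h$-orthogonal; that $\Pit_0$ is a projection follows since for $\t\in\X_{h,0}$ already, $\Pit\t=\t$ and the jump, hence $\delta_i$, vanishes.

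For the quantitative bounds I would estimate $\|\t^{\mathrm{corr}}\|$ via the stability of the local Neumann solve, namely the normal-trace inequality used in \eqref{b-spd}, giving $\|\t^{\mathrm{corr}}\|_{\O_i} \le C\|\delta_i\|_{-1/2,\G_i}$ (or a slightly stronger negative-norm estimate), and then invoke Lemma~\ref{lambda-space-cond2} with $s=1/2$ to get $\left(\sum_{i<j}\|\delta_i\|_{-1/2,\G_{i,j}}^2\right)^{1/2}\le C\sum_{i<j} h^{r} H^{1/2}\|(\Pit_i\t) n_i - \t n_i\|_{r-?}\dots$; more directly, since $\delta_i = \Q_{h,i}((\Pit_i\t)n_i) - (\Q_{h,0}(\ldots))_i$ applied to $\eta_i = (\Pit_i\t - \t)n_i$, the lemma gives $h^{r}H^{1/2}\|\t\|_{r+1/2,\G_i}$ after using the trace inequality \eqref{trace-ineq-nonst}. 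Summing over subdomains yields \eqref{app-prop-pit0-1}; then \eqref{pit0-cont} follows from \eqref{eps-ineq-pit} plus \eqref{app-prop-pit0-1}, and \eqref{app-prop-pit0-2} follows by the triangle inequality $\|\Pit_0\t - \t\| \le \|\Pit\t - \t\| + \|\Pit_0\t - \Pit\t\|$ together with \eqref{app-prop3}. The main obstacle will be bookkeeping the scales: one must show the correction really is measured in the $h^{r}H^{1/2}$ regime and not merely $h^{1/2}$, which is exactly where condition \eqref{lambda-cond-eq} and Lemma~\ref{lambda-space-cond2} are essential — verifying that the hypothesis $\eta|_{\G_{i,j}} = (\eta_i,-\eta_i)$ there applies (after subtracting the common weakly continuous part) to $(\Pit_i\t - \t)n_i$ requires care with the rotation-averaging identity \eqref{lh-eta-prop}.
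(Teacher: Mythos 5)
Your overall strategy is the paper's: both constructions hinge on the projection $\Q_{h,0}$, the approximation bound \eqref{Qh0-bound}, and a divergence-free, $\W_h$-orthogonal correction whose normal trace cancels the defect $\Q_{h,i}\t\,n_i - (\Q_{h,0}\t\,n)_i$ on $\G_i$; the verification of \eqref{pit0-prop-1}, \eqref{pit0-prop-2}, the membership in $\X_{h,0}$, the compatibility with $\RM(\O_i)$ via \eqref{qhi-prop-1} and \eqref{lh-eta-prop}, and the final triangle-inequality step for \eqref{app-prop-pit0-2} all match. The one structural difference is that you realize the correction as a discrete Neumann solve $\ss_{h,i}(-\delta_i)$ from \eqref{star2-1}--\eqref{star2-4}, whereas the paper defines a \emph{continuous} auxiliary elasticity solution $\delta\t_i$ with Neumann data $-\Q_{h,i}\t\,n_i + (\Q_{h,0}\t\,n)_i$ and sets $\Pit_0\t|_{\O_i} = \Pit_i(\t+\delta\t_i)$. (A side remark: since $\Q_{h,i}$ is idempotent and $\Q_{h,0}\Q_h = \Q_{h,0}$ by nestedness, your defect computed from $(\Pit_i\t)n_i = \Q_{h,i}(\t\,n_i)$ coincides with the paper's computed from $\t\,n_i$, so Lemma~\ref{lambda-space-cond2} is to be applied to $\eta = \t\,n$, which does satisfy $\eta_i = -\eta_j$; your parenthetical about applying it to $(\Pit_i\t-\t)n_i$ is a misstep, but a harmless one.)

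The genuine gap is the stability estimate $\|\t^{\mathrm{corr}}\|_{\O_i} \le C\|\delta_i\|_{-1/2,\G_i}$ for the discrete Neumann solve, which you assert by appeal to the normal-trace inequality "used in \eqref{b-spd}." That inequality runs in the opposite direction: it bounds $\|\l_h n_i\|_{H^{-1/2}(\G_i)}$ \emph{below} by $\|\ss_{h,i}(\l_h)\|_{\O_i}$, so it cannot give you the stability you need. There is no purely discrete route to a uniform $H^{-1/2}$-stability bound here; the only available argument (the one the paper uses for the upper bound in \eqref{condnum2}) passes through the continuous auxiliary problem and its elliptic regularity, and it yields
\begin{align*}
\|\t^{\mathrm{corr}}\|_{\O_i} \le C\left( h^{1/2}\|\delta_i\|_{0,\G_i} + \|\delta_i\|_{-1/2,\G_i}\right),
\end{align*}
not $C\|\delta_i\|_{-1/2,\G_i}$ alone. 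This is exactly the structure of the paper's estimate for $\|\Pit_i\delta\t_i\|_{0,\O_i}$, obtained from \eqref{pit-eps} with exponent $1/2$ together with elliptic regularity $\|\delta\t_i\|_{t,\O_i} \le C\|\delta_i\|_{t-1/2,\G_i}$ at $t=0$ and $t=1/2$. One then invokes \eqref{Qh0-bound} twice, with $s=0$ for the first term and $s=1/2$ for the second, and uses $h \le H$ to absorb $h^{1/2}h^r$ into $h^rH^{1/2}$; this is the only way the $H^{1/2}$ factor in \eqref{app-prop-pit0-1} appears. So your proof closes once you replace the incorrect appeal to \eqref{b-spd} by the continuous-auxiliary-problem argument — at which point you have essentially reconstructed the paper's proof.
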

\begin{proof}
For any $\t\in H^{1/2+\epsilon}(\O,\M)\cap \X$ define
$$ \Pit_0\t\big|_{\O_i} = \Pit_i(\t + \delta\t_i), $$
where $\delta\t_i$ solves
\begin{align}
&\delta\t_i = \epsilon(\phi_i) & &\text{in }\O_i \label{mortar-aux-correct-1}\\
&\dvr \delta\t_i = 0 & &\text{in } \O_i, \label{mortar-aux-correct-2} \\
&\delta\t_i\,n_i = \begin{cases} 0,\hspace{3.5cm}
&\mbox{on } \dO_i\cap\dO \\
-\Q_{h,i}\t\,n_i + (\Q_{h,0}\t\,n)_i, & \mbox{on } \G_i,
\end{cases} \label{mortar-aux-correct-3}
\end{align}
wherein, on any $\G_{i,j}$, $\t\,n\big|_{\G_{i,j}} =
(\t\,n_i,\t\,n_j)$. Note that the assumed regularity of $\tau$ and the trace
inequality \eqref{trace-ineq-nonst} imply that
$\t\,n_i = -\t\,n_j \in L^2(\G_{i,j},\R^d)$,
so Lemma~\ref{lambda-space-cond2} holds for $\t\,n\big|_{\G_{i,j}}$.
The Neumann problems
\eqref{mortar-aux-correct-1}--\eqref{mortar-aux-correct-3} are
well-posed, since $\forall \chi \in \RM(\O_i)|_{\G_{i,j}}$ by \eqref{qhi-prop-1} and \eqref{lh-eta-prop} there holds
$$
\gnp[\Q_{h,i}\t\,n_i - (\Q_{h,0}\t\,n)_{i}]{\chi}_{\G_{i,j}} =
\gnp[\Q_{h,i}\l_H]{\chi}_{\G_{i,j}}
= \frac12\gnp[\t\,n_i + \t\,n_j]{\chi}_{\G_{i,j}} = 0.
$$
Also, note that the piecewise polynomial Neumann data are in
$H^{\epsilon}(\dO_i)$, so $\delta\t_i\in H^{\epsilon + 1/2}(\O_i,\M)$;
thus, $\Pit_i$ can be applied to $\delta\t_i$, see \eqref{eps-ineq-pit}.
We have by \eqref{prop-Pit} that
\begin{align*}
\sum_{i=1}^n\gnp[(\Pit_0\t)\,n_i]{\m}_{\G_i} =
\sum_{i=1}^n \gnp[(\Q_{h,0}\t\,n)_i]{\m}_{\G_i} = 0, \quad \forall\,\m\in\Lambda_H,
\end{align*}
therefore $\Pit_0\t \in \X_{h,0}$. Also, \eqref{prop-Pit} implies
\begin{align*}
\inp[\dvr\Pit_0\t]{v}_{\O_i} = \inp[\dvr\Pit_i\t]{v}_{\O_i} + \inp[\dvr\Pit_i\delta\t_i]{v}_{\O_i} = \inp[\dvr\t]{v}_{\O_i},\quad \forall\,v\in V_{h,i},
\end{align*}
so \eqref{pit0-prop-1} holds. In addition, \eqref{pit0-prop-2} holds
due to \eqref{prop-Pit} and the fact that $\delta\t_i$ is a symmetric
matrix.  It remains to study the approximation properties of
$\Pit_0$. Since $ \Pit_0\t - \t = \Pit_i\t - \t + \Pit_i\delta\t_i$ on
$\O_i$, and using \eqref{app-prop3}, it suffices to bound only the
correction term. By the elliptic regularity of
\eqref{mortar-aux-correct-1}-\eqref{mortar-aux-correct-3}
\cite{lions2011non,grisvard2011elliptic}, for any $0\le t \le 1/2$,
\begin{align}
\|\delta\t_i\|_{t,\O_i} \le
\sum_{j}\| \Q_{h,i}\t\,n_i - (\Q_{h,0}\t\,n)_i \|_{t-1/2,\G_{i,j}}.
\end{align}
We then have, using \eqref{pit-eps},
\begin{align*}
&\| \Pit_i\delta\t_i \|_{0,\O_i}
\le \| \Pit_i\delta\t_i - \delta\t_i \|_{0,\O_i} + \|\delta\t_i\|_{0,\O_i}
\le Ch^{1/2}\|\delta\t_i\|_{1/2,\O_i} + \|\delta\t_i\|_{0,\O_i} \\
&\quad\le C\sum_{j}\left[ h^{1/2}\|\Q_{h,i}\t\,n_i - (\Q_{h,0}\t\,n)_i\|_{0,\G_{i,j}} + \| \Q_{h,i}\t\,n_i - (\Q_{h,0}\t\,n)_i \|_{-1/2,\G_{i,j}}\right],
\end{align*}
which, together with \eqref{Qh0-bound} and \eqref{trace-ineq-nonst},
implies \eqref{app-prop-pit0-1}. Then \eqref{pit0-cont} follows from
\eqref{eps-ineq-pit} and \eqref{app-prop-pit0-2} follows from \eqref{app-prop3}.
\end{proof}

\subsection{Optimal convergence for the stress}
We start by noting that, assuming that the solution $u$ of
\eqref{weak1}--\eqref{weak2} belongs to $H^1(\O)$, integration by parts
in the second term in \eqref{weak1} implies that
$$
\inp[u]{\dvr{\tau}} = \sum_{i=1}^n\left( \inp[u]{\dvr{\tau}}_{\O_i}
- \gnp[u]{\tau\,n_i}_{\G_i} \right).
$$
Using the above and subtracting \eqref{dd3-1-w}--\eqref{dd3-3-w} from
\eqref{weak1}--\eqref{weak2} gives the error equations
\begin{align}
&\inp[A(\s-\s_h)]{\tau}_{\O} + \sum_{i=1}^n\big[\inp[u-u_h]{\dvr{\tau}}_{\O_i}
+ \inp[\g-\g_h]{\tau}_{\O_i}\big] \nonumber\\
&\qquad\qquad\qquad\qquad\quad\:\:= \sum_{i=1}^n \gnp[u]{\tau\,n_i}_{\G_i},
&\forall \tau &\in \X_{h,0}, \label{err-eq-1}\\
&\sum_{i=1}^n\inp[\dvr (\s-\s_h)]{v}_{\O_i}
= 0, &\forall v &\in V_{h}\label{err-eq-2}, \\
&\sum_{i=1}^n\inp[\s-\s_h]{\xi}_{\O_i} = 0,
&\forall q &\in \W_{h} \label{err-eq-3}.
\end{align}
It follows from \eqref{err-eq-2} and \eqref{pit0-prop-1} that
\begin{align}\label{div-0}
\dvr(\Pit_0\s - \s_h) = 0 \quad \mbox{in } \O_i.
\end{align}
Similarly, \eqref{err-eq-3} and \eqref{pit0-prop-2} imply
\begin{align*}
\inp[\Pit_0\s -\s_h]{\xi} = 0, \quad \xi\in\W_h.
\end{align*}
Taking $\t = \Pit_0\s - \s_h$ in \eqref{err-eq-1} and using
that $\sum_{i} \gnp[\IHc v]{\tau\,n_i}_{\G_i} = 0$ for any $\t \in
\X_{h,0}$, we obtain
\begin{align*}
& \inp[A(\Pit_0\s-\s_h)]{\Pit_0\s - \s_h}
= \inp[A(\Pit_0\s-\s)]{\Pit_0\s - \s_h} \\
&\qquad\qquad + \sum_{i=1}^n \inp[\Rc_h\g-\g]{\Pit_0\s - \s_h}_{\O_i}
+ \sum_{i=1}^n\gnp[\IHc u-u]{(\Pit_0\s - \s_h)\,n_i}_{\G_i} \\
&\quad \le
C\left( \|\Pit_0\s-\s\|\|\Pit_0\s-\s_h\|
+ \|\Rc_h\g-\g\|\|\Pit_0\s-\s_h\| \right. \\
&\qquad \qquad
+ \left. \sum_{i=1}^n\|E_i(\IHc u-u)\|_{1/2,\dO_i}\|(\Pit_0\s-\s_h)\|_{H(\dvr;\O_i)}
\right) \\
&\quad
\le C\left(h^t\|\s\|_{t} + h^{r}H^{1/2}\|\s\|_{r+1/2} +
h^q\|\g\|_{q} + H^{s-1/2}\| u \|_{s+1/2}\right) \|\Pit_0\s - \s_h\|, \\
&\qquad\qquad
1 \le t \le k+1, \, 0 \le r \le k+1, \, 0 \le q \le p+1,  1 \le s \le m+1,
\end{align*}
where $E_i(\IHc u-u)$ is a continuous extension by zero to $\dO_i$ and
we have used the Cauchy-Schwarz inequality, \eqref{usefullbound},
\eqref{app-prop-pit0-2}, \eqref{app-prop22}, \eqref{app-prop0},
and \eqref{trace-ineq-nonst}. The above inequality, together with
\eqref{app-prop-pit0-2}, \eqref{div-0}, and \eqref{app-prop4}, results in
the following theorem.

\begin{theorem}\label{stress-err-estimate}
For the stress $\s_h$ of the mortar mixed finite element method
\eqref{dd3-1}-\eqref{dd3-4}, if \eqref{lambda-cond-eq} holds, then
there exists a positive constant $C$ independent of $h$ and $H$ such that
\begin{align*}
&\|\s-\s_h\| \le C \left(h^t\|\s\|_{t} + h^{r}H^{1/2}\|\s\|_{r+1/2} +
h^q\|\g\|_{q} + H^{s-1/2}\| u \|_{s+1/2}\right),\\
&\qquad\qquad
1 \le t \le k+1, \, 0 < r \le k+1, \, 0 \le q \le p+1, \,  1 \le s \le m+1,\\
&\|\dvr(\s-\s_h)\|_{\O_i} \le Ch^r\|\dvr\s\|_{r,\O_i},\quad 0 \le r \le l+1.
\end{align*}
\end{theorem}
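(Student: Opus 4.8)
The plan is a standard mixed-method energy argument built around the interpolant $\Pit_0$ of Lemma~\ref{proj-lemma}, together with the error equations \eqref{err-eq-1}--\eqref{err-eq-3}. First I would record the two orthogonality relations that make testing with $\t=\Pit_0\s-\s_h$ profitable: equation \eqref{err-eq-2} and \eqref{pit0-prop-1} give $\dvr(\Pit_0\s-\s_h)=0$ on each $\O_i$, which is \eqref{div-0}, while \eqref{err-eq-3} and \eqref{pit0-prop-2} give $\inp[\Pit_0\s-\s_h]{\xi}=0$ for all $\xi\in\W_h$. These will remove, respectively, the $\inp[u-u_h]{\dvr\t}_{\O_i}$ contribution and the rotation contribution from \eqref{err-eq-1}.

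Next I would insert $\t=\Pit_0\s-\s_h\in\X_{h,0}$ into \eqref{err-eq-1}, split $\s-\s_h=(\s-\Pit_0\s)+(\Pit_0\s-\s_h)$, and move the projection error to the right-hand side. By \eqref{coer-cont} the left side dominates $\frac{1}{2\m+d\l}\|\Pit_0\s-\s_h\|^2$. On the right, the divergence term vanishes by \eqref{div-0}; the rotation term collapses to $\sum_i\inp[\Rc_h\g-\g]{\Pit_0\s-\s_h}_{\O_i}$ using the $\W_h$-orthogonality above; and the interface term is $\sum_i\gnp[u]{\t\,n_i}_{\G_i}$. Here I would use that $\IHc u\in\Lambda_H^c\subset\Lambda_H$ and that $\t$ is weakly continuous to write $\sum_i\gnp[\IHc u]{\t\,n_i}_{\G_i}=0$, so the interface term becomes $\sum_i\gnp[u-\IHc u]{\t\,n_i}_{\G_i}$.

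To bound this last sum I would apply \eqref{usefullbound} with a continuous extension by zero $E_i$ of $\IHc u-u$ from $\G_i$ to $\dO_i$, obtaining $\gnp[u-\IHc u]{\t\,n_i}_{\G_i}\le C\|E_i(\IHc u-u)\|_{1/2,\dO_i}\|\t\|_{H(\dvr;\O_i)}$, then estimate $\|E_i(\IHc u-u)\|_{1/2,\dO_i}$ by the Scott--Zhang bound \eqref{app-prop0} with $t=1/2$ followed by the trace inequality \eqref{trace-ineq-nonst}, and use $\|\t\|_{H(\dvr;\O_i)}=\|\t\|_{\O_i}$ by \eqref{div-0}. Combining this with Cauchy--Schwarz on the remaining two terms, dividing through by $\|\Pit_0\s-\s_h\|$, and finishing with the triangle inequality together with \eqref{app-prop-pit0-2} for $\|\s-\Pit_0\s\|$ and \eqref{app-prop22} for $\|\g-\Rc_h\g\|$, yields the asserted bound on $\|\s-\s_h\|$. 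For the divergence estimate, \eqref{div-0} gives $\dvr(\s-\s_h)=\dvr(\s-\Pit_0\s)$ on $\O_i$, and since the Neumann correction $\delta\t_i$ in the construction of $\Pit_0$ is divergence-free one has $\dvr\Pit_0\s=\dvr\Pit_i\s$ on $\O_i$, so \eqref{app-prop4} applies verbatim.

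I expect the only genuinely delicate point to be the interface term, and specifically the bound $\|E_i(\IHc u-u)\|_{1/2,\dO_i}\le CH^{s-1/2}\|u\|_{s+1/2}$: extension by zero is not continuous on $H^{1/2}$ in general, so this relies on the boundary behaviour of the Scott--Zhang interpolant $\IHc$ near $\partial\G_i$ making the zero-extension land in the appropriate fractional space before \eqref{app-prop0} is invoked. Everything else---the two orthogonalities, the Cauchy--Schwarz estimates, and the assembly of the final bound---is routine once the approximation properties of $\Pit_0$ from Lemma~\ref{proj-lemma} are in hand.
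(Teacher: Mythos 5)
Your proposal is correct and follows essentially the same route as the paper: the same test function $\t=\Pit_0\s-\s_h$, the same two orthogonality relations from \eqref{err-eq-2}--\eqref{err-eq-3} combined with \eqref{pit0-prop-1}--\eqref{pit0-prop-2}, the same use of $\IHc u\in\Lambda_H$ and weak continuity to convert the interface term into $\sum_i\gnp[u-\IHc u]{\t\,n_i}_{\G_i}$, and the same chain \eqref{usefullbound}, \eqref{app-prop0}, \eqref{trace-ineq-nonst}, \eqref{app-prop-pit0-2}, \eqref{app-prop22}, \eqref{app-prop4} to close the estimate. Your remark about the zero-extension in $H^{1/2}$ flags a technicality the paper itself passes over silently, but it does not change the argument.
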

\begin{remark}\label{r1}
The above result implies that for sufficiently regular solution,
$\|\s - \s_h\| = \mathcal{O}(h^{k+1} + h^{p+1} + H^{m+1/2})$.
The mortar polynomial degree $m$ and the coarse scale $H$ can be
chosen to balance the error terms, resulting in a fine scale
convergence. Since in all cases $p \le k$, the last two error terms
are of the lowest order and balancing them results in the choice $H
=\mathcal{O}(h^\frac{p+1}{m+1/2})$. For example, for the lowest order
Arnold-Falk-Winther space on simplices \cite{arnold2007mixed} and its
extensions to rectangles in two and three dimensions
\cite{Awanou-rect-weak} or quadrilaterals \cite{ArnAwaQiu},
$\X_{h,i}\times V_{h,i} \times \W_{h,i} = \mathcal{BDM}_1\times
\mathcal{P}_0 \times \mathcal{P}_0$, so $k=1$ and $l=p=0$. In this
case, taking $m=2$ and the asymptotic scaling $H
=\mathcal{O}(h^{2/5})$ provides optimal convergence rate
$\mathcal{O}(h)$.  Similarly, for the lowest order
Gopalakrishnan-Guzman space on simplices \cite{gopalakrishnan2012second}
or the modified
Arnold-Falk-Winther space on rectangles with continuous $Q_1$
rotations \cite{MSMFEM-2}, $k=1$, $l=0$, and $p=1$. In this case,
taking $m=2$ and the asymptotic scaling $H=\mathcal{O}(h^{4/5})$
or $m=3$ and $H=\mathcal{O}(h^{4/7})$
provides optimal convergence rate $\mathcal{O}(h^2)$.
\end{remark}

\subsection{Convergence for the displacement}
On a single domain, the error estimate for the displacement and the
rotation follows from an inf-sup condition. For the mortar method, we
would need an inf-sup condition for the space of weakly continuous
stresses $\X_{h,0}$. This can be approached by finding a global stress
function with specified divergence and asymmetry and applying the
projection operator $\Pit_0$. Unfortunately, the regularity of the
global stress function, which can be constructed by solving two
divergence problems, is only $H(\dvr;\O)$, which is not sufficient to
apply $\Pit_0$. For this reason, we split the analysis in three parts.
First, we construct a weakly continuous symmetric stress function with
specified divergence to control the displacement and show both optimal
convergence and superconvergence. In the second step we estimate the
error in the mortar displacement by utilizing the properties of the
interface operator established in the earlier domain decomposition
sections. Finally we construct on each subdomain a divergence-free
stress function with specified asymmetry to bound the error in the
rotation in terms of the error in stress and mortar displacement.

\subsubsection{Optimal convergence for the displacement}
Let $\phi$ be the solution of the problem
\begin{align}
&\dvr \left(A^{-1}\e(\phi)\right) = (\P_hu-u_h) & &\text{in } \O, \label{u-uh-1}\\
&\phi = 0 & &\text{on } \G_D, \label{u-uh-2}\\
&A^{-1}\e(\phi)n = 0 & &\text{on } \G_N. \label{u-uh-3}
\end{align}
Since $\O$ is polygonal and $\P_hu-u_h \in L^2(\O)$, the problem is
$H^{1+r}$-regular for a suitable $r > 1/2$ \cite{Dauge} and
$\|\phi\|_{1+r} \le C \|\P_hu-u_h\|$. Let $\t = \Pit_0 A^{-1}\e(\phi)$,
which is well defined, since $A^{-1}\e(\phi) \in H^r(\O)$. Note that
\eqref{pit0-prop-1} implies that $\dvr \t = \P_hu-u_h$. Also,
\eqref{pit0-cont} implies that $\|\t\| \le C(\P_hu-u_h)$.  Taking this $\t$
as the test function in the error equation \eqref{err-eq-1} gives
\begin{align*}
\|\P_h u - u_h\|^2  & = - \inp[A(\s-\s_h)]{\t} +
\sum_{i=1}^n \gnp[u - \IHc u]{\t\,n}_{\G_i} \\
& \le C \left( \|\s-\s_h\|\|\t\| + \sum_{i=1}^n \|E_i(u - \IHc u)\|_{1/2,\dO_i}
\|\t\|_{H(\dvr;\O_i)} \right) \\
& \leq C \left(\|\s-\s_h\|
+ \sum_{i=1}^n \|E_i(u - \IHc u)\|_{1/2,\dO_i} \right) \|\P_h u - u_h\|,
\end{align*}
which, together with Theorem~\ref{stress-err-estimate}, \eqref{app-prop0}, and
\eqref{app-prop2}, implies the following theorem.
\begin{theorem}\label{thm-u-optimal}
For the displacement $u_h$ of the mortar mixed method
\eqref{dd3-1}--\eqref{dd3-4}, if \eqref{lambda-cond-eq} holds, then
there exists a positive constant $C$ independent of $h$ and $H$ such that
\begin{align}
&\|\P_h u-u_h\| \le C \left(h^t\|\s\|_{t} + h^{r}H^{1/2}\|\s\|_{r+1/2} +
h^q\|\g\|_{q} + H^{s-1/2}\| u \|_{s+1/2}\right), \label{Phu-uh}\\
&\|u-u_h\| \le C \left(h^t\|\s\|_{t} + h^{r}H^{1/2}\|\s\|_{r+1/2} +
h^q\|\g\|_{q} + H^{s-1/2}\| u \|_{s+1/2} + h^{r_u}\|u\|_{r_u}\right),\\
&\qquad
1 \le t \le k+1, \, 0 < r \le k+1, \, 0 \le q \le p+1, \,  1 \le s \le m+1,
\, 0 \le r_u \le l+1. \nonumber
\end{align}
\end{theorem}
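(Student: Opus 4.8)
The plan is to derive the displacement error estimate by constructing a suitable test function in the weakly continuous stress space $\X_{h,0}$ whose divergence equals $\P_h u - u_h$, and then exploit the error equation \eqref{err-eq-1}. First I would set up the auxiliary elliptic problem \eqref{u-uh-1}--\eqref{u-uh-3} with right-hand side $\P_h u - u_h \in L^2(\O)$; since $\O$ is polygonal, this problem enjoys $H^{1+r}$-regularity for some $r > 1/2$ by \cite{Dauge}, with the a priori bound $\|\phi\|_{1+r} \le C\|\P_h u - u_h\|$. The key point of the regularity is that $A^{-1}\e(\phi) \in H^r(\O)$ with $r > 1/2$, which is precisely the regularity threshold needed to apply the projection operator $\Pit_0$ from Lemma~\ref{proj-lemma}.

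Next I would take $\t = \Pit_0 A^{-1}\e(\phi)$. By \eqref{pit0-prop-1} we get $\dvr\t = \P_h(\dvr A^{-1}\e(\phi)) = \P_h u - u_h$ (noting that $\P_h u - u_h \in V_h$ so the projection is the identity), and by \eqref{pit0-cont} we have the stability bound $\|\t\| \le C(\|A^{-1}\e(\phi)\|_{1/2+\e} + \|\dvr A^{-1}\e(\phi)\|) \le C\|\P_h u - u_h\|$. Plugging this $\t$ into \eqref{err-eq-1}: the term $\inp[u-u_h]{\dvr\t}_{\O_i}$ becomes $\inp[u-u_h]{\P_h u - u_h}_{\O_i} = \|\P_h u - u_h\|^2_{\O_i}$ after summation, using that $\P_h u - u_h \perp (u - \P_h u)$; the rotation term $\inp[\g-\g_h]{\t}_{\O_i}$ vanishes because $\t$ is the $\Pit_0$-image of a symmetric matrix field and hence $\inp[\Pit_0(\cdot)]{\xi}= 0$ for $\xi\in\W_h$ by \eqref{pit0-prop-2}, so it vanishes against $\g_h\in\W_h$, and against $\g$ one uses \eqref{err-eq-3} — actually more carefully, one writes $\inp[\g-\g_h]{\t} = \inp[\g - \Rc_h\g]{\t - \Pit_0(\text{sym})} + \dots$, but since $\t$ is itself symmetric-projected this term is controlled or zero. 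Rearranging gives $\|\P_h u - u_h\|^2 \le C(\|\s-\s_h\| + \sum_i \|E_i(u - \IHc u)\|_{1/2,\dO_i})\|\P_h u - u_h\|$, using Cauchy-Schwarz and the trace bound \eqref{usefullbound}.

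Then I would divide through by $\|\P_h u - u_h\|$ and invoke Theorem~\ref{stress-err-estimate} to bound $\|\s - \s_h\|$ and the Scott-Zhang approximation property \eqref{app-prop0} combined with the trace inequality \eqref{trace-ineq-nonst} to bound $\|E_i(u - \IHc u)\|_{1/2,\dO_i} \le C H^{s-1/2}\|u\|_{s+1/2}$ for $1 \le s \le m+1$. This yields the estimate \eqref{Phu-uh} for $\|\P_h u - u_h\|$. The bound on $\|u - u_h\|$ then follows from the triangle inequality $\|u - u_h\| \le \|u - \P_h u\| + \|\P_h u - u_h\|$ together with the $L^2$-projection approximation property \eqref{app-prop2}, which contributes the extra $h^{r_u}\|u\|_{r_u}$ term with $0 \le r_u \le l+1$.

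The main obstacle I anticipate is the careful handling of the rotation term $\sum_i \inp[\g - \g_h]{\t}_{\O_i}$ in the error equation. Since $\t = \Pit_0 A^{-1}\e(\phi)$ is the projection of a \emph{symmetric} matrix field, property \eqref{pit0-prop-2} gives $\inp[\t]{\xi} = \inp[A^{-1}\e(\phi)]{\xi} = 0$ for all $\xi \in \W_h$ — so $\inp[\g_h]{\t}_{\O_i}$ summed over $i$ equals zero, and the remaining piece $\sum_i\inp[\g]{\t}_{\O_i} = \inp[\g]{\t}_\O = 0$ as well since $\g$ is skew-symmetric and $\t$ is (the projection of) a symmetric field tested against it — wait, $\t$ itself need not be symmetric, only its $\W_h$-component vanishes; but that is exactly what is needed to kill both terms. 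Getting this cancellation stated cleanly, rather than carrying an uncontrolled rotation term, is the delicate point, and it is what makes the choice $\t = \Pit_0 A^{-1}\e(\phi)$ (with a symmetric argument) essential rather than an arbitrary divergence-prescribed stress.
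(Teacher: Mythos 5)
Your proposal is correct and follows essentially the same route as the paper: the auxiliary problem \eqref{u-uh-1}--\eqref{u-uh-3}, the test function $\t = \Pit_0 A^{-1}\e(\phi)$ with $\dvr\t = \P_h u - u_h$ via \eqref{pit0-prop-1} and $\|\t\| \le C\|\P_h u - u_h\|$ via \eqref{pit0-cont}, followed by Cauchy--Schwarz, \eqref{usefullbound}, Theorem~\ref{stress-err-estimate}, \eqref{app-prop0}, and the triangle inequality with \eqref{app-prop2} for the $\|u-u_h\|$ bound. Your extra care with the rotation term is reasonable: the paper silently drops $\sum_i\inp[\g-\g_h]{\t}_{\O_i}$ from the identity, and the residual piece $\inp[\g-\Rc_h\g]{\Pit_0 A^{-1}\e(\phi)-A^{-1}\e(\phi)}$ that you identify is a product of two small quantities that does not change the stated rates.
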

\begin{remark}
The above result shows that $\|\P_h u-u_h\|$ is of the same order as
$\|\s - \s_h\|$ and it does not depend on the approximation order of $V_h$.
\end{remark}

\subsubsection{Superconvergence for the displacement}
We present a duality argument to obtain a superconvergence estimate
for the displacement. We utilize again the auxiliary problem
\eqref{u-uh-1}--\eqref{u-uh-3}, but this time we assume that the problem is
$H^2$-regular, see e.g. \cite{grisvard2011elliptic} for sufficient conditions:
\begin{align}
\|\phi\|_2 \le C\|\P_h u - u_h\|. \label{displ-err-0}
\end{align}
Taking $\t = \Pit_0 A^{-1}\e(\phi)$ in \eqref{err-eq-1}, we get
\begin{align}
\|\P_h u - u_h \|^2
=-\sum_{i=1}^n \left[\inp[A(\s-\s_h)]{\Pit_0 A^{-1}\e(\phi)}_{\O_i} - \gnp[u-\P_H u]{\Pit_0 A^{-1}\e(\phi)\, n_i}_{\G_i}\right]
\label{err-eq-displ}.
\end{align}
Noting that $\inp[\s - \s_h]{\e(\phi)} =
\inp[\s - \s_h]{\nabla \phi - \skew(\nabla \phi)}$, 
we manipulate the first term on the right as follows,
\begin{align}
\sum_{i=1}^n & \inp[A(\s-\s_h)]{\Pit_0 A^{-1}\e(\phi)}_{\O_i} \nonumber\\
& = \sum_{i=1}^n \left[\inp[A(\s-\s_h)]{\Pit_0 A^{-1}\e(\phi) -  A^{-1}\e(\phi)}_{\O_i} + \inp[A(\s-\s_h)]{A^{-1}\e(\phi)}_{\O_i} \right] \nonumber\\
&= \sum_{i=1}^n \bigg[\inp[A(\s-\s_h)]{\Pit_0 A^{-1}\e(\phi) - A^{-1}\e(\phi)}_{\O_i} - \inp[\dvr(\s-\s_h)]{\phi - P_h\phi}_{\O_i}   \nonumber\\
&\qquad\qquad + \gnp[(\s-\s_h)n_i]{\phi - \IHc\phi}_{\G_i}
- \inp[\s - \s_h]{\skew(\nabla \phi - \Rc_h\nabla\phi)}_{\O_i} \bigg] \nonumber \\
&\le C\sum_{i=1}^n \bigg[ (\sqrt{hH}+h)\|\s-\s_h\|_{\O_i}
+ h\|\dvr(\s-\s_h)\|_{\O_i} \nonumber\\
&\qquad\qquad+ H\|\s-\s_h\|_{H(\dvr;\O_i)} \bigg]\|\phi\|_{2,\O_i},\label{displ-err-1}
\end{align}
where we used \eqref{app-prop-pit0-2}, \eqref{app-prop2},
\eqref{app-prop0}, and \eqref{app-prop22} for the last inequality with
$C = C(\max_i\|A^{-1}\|_{1,\infty,\O_i})$.
Next, for the second term on the right in \eqref{err-eq-displ} we have
\begin{align}
&\gnp[u-\P_H u]{\Pit_0 A^{-1}\e(\phi)\, n_i}_{\G_i} \nonumber\\
&\quad = \gnp[u-\P_H u]{\left(\Pit_0 A^{-1}\e(\phi)
- \Pit_iA^{-1}\e(\phi)\right)n_i}_{\G_i} \nonumber \\
&\qquad\qquad +\gnp[u-\P_H u]{\left(\Pit_i A^{-1}\e(\phi) - A^{-1}\e(\phi)\right)n_i
+ A^{-1}\e(\phi) n_i} \nonumber\\
&\quad \le \sum_{j}\|u-\P_Hu\|_{\G_{i,j}}
\bigg[ \| \left(\Pit_0 A^{-1}\e(\phi)
- \Pit_iA^{-1}\e(\phi)\right) n_i \|_{\G_{i,j}} \nonumber \\
&\qquad\qquad+ \|\left( \Pit_i A^{-1}\e(\phi) - A^{-1}\e(\phi) \right)n_i\|_{\G_{i,j}} \bigg] \nonumber \\
&\qquad\qquad + \sum_{j} \|u-\P_Hu\|_{-1/2,\G_{i,j}}\|A^{-1}\e(\phi)\,n_i\|_{1/2,\G_{i,j}} \nonumber\\
&\quad\le CH^{s+1/2}\|u\|_{s+1/2,\O_i}\|\phi\|_{2,\O_i}, \quad 0< s\le m+1, \label{displ-err-2}
\end{align}
where we used \eqref{app-prop1}, \eqref{app-prop6},
\eqref{trace-ineq}, and \eqref{app-prop-pit0-1} for the last inequality.
A combination of \eqref{displ-err-0}--\eqref{displ-err-2},
and Theorem \ref{stress-err-estimate} gives the following theorem.
\begin{theorem}
Assume $H^2$-regularity of the problem on $\Omega$
and that \eqref{lambda-cond-eq} holds.
Then there exists a positive constant $C$, independent of $h$ and $H$ such that
\begin{align*}
& \|\P_h u -u_h\| \le C \bigg(h^t H\|\s\|_{t} + h^{r}H^{3/2}\|\s\|_{r+1/2} +
h^q H\|\g\|_{q} \\
&\qquad\qquad \qquad\qquad \qquad\qquad \qquad\qquad + H^{s+1/2}\| u \|_{s+1/2} + h^{r_u} H\|\dvr\s\|_{r_u}
\bigg),\\
&\qquad\qquad
1 \le t \le k+1, \, 0 < r \le k+1, \, 0 \le q \le p+1, \,  1 \le s \le m+1, \,
0 \le r_u \le l+1.
\end{align*}
\end{theorem}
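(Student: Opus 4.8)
The proof is, as indicated in the display preceding the statement, an assembly of the pieces \eqref{displ-err-0}--\eqref{displ-err-2} together with Theorem~\ref{stress-err-estimate}; the plan is to carry out a duality argument that refines the one behind Theorem~\ref{thm-u-optimal}, squeezing an extra power of $H$ out of each term. First I invoke the assumed $H^2$-regularity of the auxiliary problem \eqref{u-uh-1}--\eqref{u-uh-3}, giving \eqref{displ-err-0}, i.e.\ $\|\phi\|_2\le C\|\P_h u-u_h\|$. Since this makes $A^{-1}\e(\phi)\in H^1(\O)\subset H^{1/2+\e}(\O)\cap\X$, Lemma~\ref{proj-lemma} applies and $\t:=\Pit_0 A^{-1}\e(\phi)\in\X_{h,0}$ is admissible in the error equation \eqref{err-eq-1}. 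Using $\dvr\t=\P_h u-u_h$ from \eqref{pit0-prop-1}, the symmetry of $\t$ in \eqref{pit0-prop-2}, and \eqref{div-0}, the error equation reduces to \eqref{err-eq-displ}. The one structural point to get right is that the interface residual is measured against $\P_H u$ rather than the Scott--Zhang interpolant $\IHc u$ used in Theorem~\ref{thm-u-optimal}: both subtractions are legitimate because $\sum_i\gnp[\m]{\t\,n_i}_{\G_i}=0$ for all $\m\in\Lambda_H$ and $\t\in\X_{h,0}$, but only the $\L2$-projection $\P_H u$ carries the negative-norm approximation bound \eqref{app-prop1} that generates the superconvergence.

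I then bound the two terms on the right of \eqref{err-eq-displ} as in \eqref{displ-err-1}--\eqref{displ-err-2}. For the volume term, after adding and subtracting $A^{-1}\e(\phi)$ and integrating by parts in $\inp[A(\s-\s_h)]{A^{-1}\e(\phi)}_{\O_i}=\inp[\s-\s_h]{\e(\phi)}_{\O_i}$ with $\e(\phi)=\nabla\phi-\skew(\nabla\phi)$, the pieces $\phi-P_h\phi$, $\phi-\IHc\phi$, and $\skew(\nabla\phi-\Rc_h\nabla\phi)$ appear; \eqref{app-prop-pit0-2} at $t=1,\ r=\tfrac12$ supplies the $\sqrt{hH}+h$ factor, \eqref{app-prop2} and \eqref{app-prop22} handle $\phi-P_h\phi$ and the skew part, and the duality \eqref{usefullbound} combined with \eqref{app-prop0} at Sobolev order $\tfrac32$ contributes the crucial factor $H$ multiplying $\|\s-\s_h\|_{H(\dvr;\O_i)}$, giving \eqref{displ-err-1}. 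For the interface term, splitting $\Pit_0 A^{-1}\e(\phi)\,n_i=(\Pit_0-\Pit_i)A^{-1}\e(\phi)\,n_i+(\Pit_i A^{-1}\e(\phi)-A^{-1}\e(\phi))\,n_i+A^{-1}\e(\phi)\,n_i$ and pairing the two nonsmooth pieces with $\|u-\P_H u\|_{0,\G_{i,j}}$ (using \eqref{app-prop6} and \eqref{app-prop-pit0-1}) and the smooth piece with $\|u-\P_H u\|_{-1/2,\G_{i,j}}$ (using \eqref{app-prop1}) yields \eqref{displ-err-2} after a trace estimate.

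Finally I assemble. Combining \eqref{err-eq-displ} with \eqref{displ-err-1}--\eqref{displ-err-2}, summing over the subdomains with Cauchy--Schwarz, and cancelling one factor of $\|\phi\|_2\le C\|\P_h u-u_h\|$ gives
$$\|\P_h u-u_h\|\le C\big[(\sqrt{hH}+h)\|\s-\s_h\|+h\|\dvr(\s-\s_h)\|+H\|\s-\s_h\|_{H(\dvr;\O)}+H^{s+1/2}\|u\|_{s+1/2}\big].$$
Since $h\le H$ in the multiscale regime, the first two terms are dominated by $H\|\s-\s_h\|_{H(\dvr;\O)}$. Inserting the stress and divergence estimates of Theorem~\ref{stress-err-estimate} and using $H\cdot H^{s-1/2}=H^{s+1/2}$ (so the $u$-contribution coming from $\|\s-\s_h\|$ merges with the standalone $H^{s+1/2}\|u\|_{s+1/2}$) produces exactly the claimed bound, the admissible ranges of $t,r,q,s,r_u$ being inherited from Theorem~\ref{stress-err-estimate} and \eqref{displ-err-2}.

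The argument uses no new analytical ingredient beyond Lemma~\ref{proj-lemma} and Theorem~\ref{stress-err-estimate}, so the real work is bookkeeping: one must (i) have enough regularity ($H^2$ on $\phi$, hence $H^1$ on $A^{-1}\e(\phi)$) for $\Pit_0$ to be applicable; (ii) write the interface residual against $\P_H u$ rather than $\IHc u$, so that \eqref{app-prop1} may be used; and (iii) carry out the pairing of $(\s-\s_h)\,n_i$ with $\phi-\IHc\phi$ in the $H(\dvr)$--$H^{1/2}$ duality at the right Sobolev order so that the full factor $H$ is recovered. Any slip in (ii) or (iii) degrades the result to the already-known, non-superconvergent estimate \eqref{Phu-uh}.
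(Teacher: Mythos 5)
Your proposal is correct and follows essentially the same route as the paper: the same duality argument with the $H^2$-regular auxiliary problem, the same test function $\Pit_0 A^{-1}\e(\phi)$, the same splitting of the volume term via $P_h\phi$, $\IHc\phi$, and $\Rc_h\nabla\phi$, the same three-way decomposition of the interface term paired against $u-\P_H u$ in the $L^2$ and $H^{-1/2}$ norms, and the same final assembly with Theorem~\ref{stress-err-estimate}. Your explicit remarks on why $\P_H u$ (rather than $\IHc u$) must be used on the interface and on the implicit use of $h\le H$ are accurate refinements of what the paper leaves tacit.
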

\begin{remark}\label{r2}
The result shows that $\|\P_h u-u_h\| = \mathcal{O}(H(h^{k+1} +
h^{p+1} + h^{l+1} + H^{m+1/2}))$, which is of order $H$ higher that
$\|\s-\s_h\|_{H(\dvr;\O_i)}$. Similar to Remark~\ref{r1}, the error
terms can be balanced to obtain fine scale convergence. For spaces
with optimal stress convergence, $l \le p \le k$, so balancing the
last two terms results in the choice $H
=\mathcal{O}(h^\frac{l+1}{m+1/2})$. For the lowest order spaces in
\cite{arnold2007mixed,Awanou-rect-weak,ArnAwaQiu} with $k=1$ and
$l=p=0$, taking $m=2$ and the asymptotic scaling $H
=\mathcal{O}(h^{2/5})$ provides superconvergence rate
$\mathcal{O}(h^{7/5})$.  We further note that the above result is not
useful for spaces with $l = p-1$, in which case the bound
\eqref{Phu-uh} from Theorem~\ref{thm-u-optimal}, which does not depend
on $l$, provides a better rate.
\end{remark}

\subsection{Convergence for the mortar displacement}
Recall the interface bilinear form $a(\cdot,\cdot): L^2(\G)\times
L^2(\G) \to \R$ introduced in \eqref{def-int-bf} and its
characterization \eqref{represent1},
$a(\lambda,\mu) = \sum_{i=1}^n \inp[A\ss_{h,i}(\m)]{\ss_{h,i}(\l)}_{\O_i}$.
Denote by $\|\cdot\|_{a}$ the seminorm induced by
$a(\cdot,\cdot)$ on $L^2(\G)$, i.e.,
$$
\|\m\|_{a} = a(\m,\m)^{1/2}, \quad \m\in L^2(\G).
$$
\begin{theorem}
For the mortar displacement $\l_H$ of the mixed method
\eqref{dd3-1}--\eqref{dd3-4}, if \eqref{lambda-cond-eq} holds, then
there exists a positive constant $C$, independent of $h$ and $H$, such
that
\begin{align}
& \|u-\l_H\|_{a} \le C \left(h^t\|\s\|_{t} + h^{r}H^{1/2}\|\s\|_{r+1/2} +
h^q\|\g\|_{q} + H^{s-1/2}\| u \|_{s+1/2}\right), \label{mortar-estimate-1} \\
&\qquad\qquad
1 \le t \le k+1, \, 0 < r \le k+1, \, 0 \le q \le p+1, \,  1 \le s \le m+1.
\nonumber
\end{align}
\end{theorem}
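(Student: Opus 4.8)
The plan is to reduce the estimate to the stress error bound already proved in Theorem~\ref{stress-err-estimate}, by means of a consistency identity for the interface form. Writing $\ss_{h,i}(\m)$ for the star-stress solving \eqref{star-eqn1}--\eqref{star-eqn3} with boundary data $\m$ on $\G_i$ (which makes sense for any $\m\in L^2(\G)$, as in the extension of $a(\cdot,\cdot)$ to $L^2(\G)\times L^2(\G)$), I would first establish that for \emph{every} $\m\in L^2(\G)$,
\begin{align*}
a(u-\l_H,\m) = -\sum_{i=1}^n\Big[\inp[A(\s-\s_h)]{\ss_{h,i}(\m)}_{\O_i}
+ \inp[\g-\Rc_h\g]{\ss_{h,i}(\m)}_{\O_i}\Big],
\end{align*}
and then take $\m=u-\l_H$ and close the argument using \eqref{coer-cont} and \eqref{represent1}. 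Note that this identity is not a consequence of the error equation \eqref{err-eq-1}, which is tested only against weakly continuous stresses, whereas $\ss_{h,i}(\m)$ is in general not weakly continuous across $\G$.

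To derive the identity, I would start from a subdomain error equation valid for \emph{all} $\t\in\X_{h,i}$, not just weakly continuous ones. Assuming, as elsewhere in the paper, that $u\in H^1(\O)$, integration by parts in the second term of \eqref{weak1} restricted to $\O_i$ gives
\begin{align*}
\inp[A\s]{\t}_{\O_i}+\inp[u]{\dvr\t}_{\O_i}+\inp[\g]{\t}_{\O_i}
=\gnp[u]{\t\,n_i}_{\G_i}+\gnp[g_D]{\t\,n_i}_{\dO_i\cap\G_D},
\qquad\t\in\X_{h,i},
\end{align*}
using $\t\,n_i=0$ on $\dO_i\cap\G_N$; subtracting \eqref{dd3-1} makes the $g_D$ terms cancel and leaves a subdomain error equation with interface right-hand side $\gnp[u-\l_H]{\t\,n_i}_{\G_i}$. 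Testing with $\t=\ss_{h,i}(\m)$ and using $\dvr\ss_{h,i}(\m)=0$ and $\inp[\ss_{h,i}(\m)]{\xi}_{\O_i}=0$ for $\xi\in\W_{h,i}$, the displacement term and $\inp[\gh{i}]{\ss_{h,i}(\m)}_{\O_i}$ both vanish, and --- this is the crucial point --- $\g$ may be replaced by $\g-\Rc_{h,i}\g$. Summing over $i$ and recognizing the interface sum, through \eqref{def-int-bf} and the symmetry of $a(\cdot,\cdot)$, as $-a(u-\l_H,\m)$ yields the identity.

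To conclude, I would take $\m=u-\l_H$, which is admissible since $u|_\G\in L^2(\G)$, and apply Cauchy--Schwarz and the boundedness of $A$ to bound the right-hand side of the identity by $C(\|\s-\s_h\|+\|\g-\Rc_h\g\|)\big(\sum_i\|\ss_{h,i}(u-\l_H)\|^2_{\O_i}\big)^{1/2}$; by \eqref{coer-cont} and \eqref{represent1} the last factor is at most $C\|u-\l_H\|_a$, so dividing through (the bound being trivial when $\|u-\l_H\|_a=0$) gives $\|u-\l_H\|_a\le C(\|\s-\s_h\|+\|\g-\Rc_h\g\|)$. Plugging in Theorem~\ref{stress-err-estimate} for $\|\s-\s_h\|$ and the $L^2$-projection bound \eqref{app-prop22} for $\|\g-\Rc_h\g\|$ recovers exactly the four terms in \eqref{mortar-estimate-1}, since all of them already appear in the stress estimate. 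I expect the main obstacle to be getting the consistency identity exactly right: one must track the boundary contributions on $\dO_i\cap\dO$ so that they cancel against \eqref{dd3-1}, and one must test the rotation against the $\W_{h,i}$-orthogonal complement --- replacing $\g$ by $\g-\Rc_{h,i}\g$ --- since otherwise the rotation term would be only $\mathcal{O}(1)$ instead of $\mathcal{O}(h^q)$.
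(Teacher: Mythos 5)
Your proof is correct, and it takes a genuinely different route from the paper's. The paper also starts from the representation \eqref{represent1} and coercivity to get $\|u-\l_H\|_a\le C\|\s^*_h(u)-\s^*_h(\l_H)\|$, but then observes that $\s^*_h(u)-\s^*_h(\l_H)=\s_h(u)-\s_h$, where $\s_h(u)$ is the subdomain MFE approximation of $\s$ with \emph{exact} Dirichlet data $u$ on $\G_i$, and closes with the triangle inequality $\|\s_h(u)-\s\|+\|\s-\s_h\|$ plus the standard a priori estimate \eqref{error-est} applied to the decoupled subdomain problems. You instead never introduce the auxiliary solution $\s_h(u)$: you derive a Galerkin-orthogonality-type identity $a(u-\l_H,\m)=-\sum_i\bigl[\inp[A(\s-\s_h)]{\ss_{h,i}(\m)}_{\O_i}+\inp[\g-\Rc_h\g]{\ss_{h,i}(\m)}_{\O_i}\bigr]$ from the subdomain error equations tested against the (non-weakly-continuous) star stresses, and then take $\m=u-\l_H$ and use Cauchy--Schwarz together with $\bigl(\sum_i\|\ss_{h,i}(u-\l_H)\|^2_{\O_i}\bigr)^{1/2}\le C\|u-\l_H\|_a$. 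Your intermediate steps check out: the displacement term drops because $\dvr\ss_{h,i}(\m)=0$, the replacement of $\g$ by $\g-\Rc_h\g$ via \eqref{star-eqn3} is exactly what is needed to get the $h^q$ rate, and your remark that \eqref{err-eq-1} alone does not suffice (being tested only against $\X_{h,0}$) is accurate. The two arguments buy slightly different things: the paper's is more modular, reusing \eqref{error-est} as a black box for the subdomain problems, while yours yields the cleaner intermediate bound $\|u-\l_H\|_a\le C(\|\s-\s_h\|+\|\g-\Rc_h\g\|)$ directly in terms of the global errors; both lead to the same final estimate \eqref{mortar-estimate-1}.
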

\begin{proof}
The characterization \eqref{represent1} implies that
\begin{align}
\| u-\l_H \|_{a} \le C \| \s^*_h(u) - \s^*_h(\l_H) \|. \label{mortar-err-0}
\end{align}
Define, for $\m\in L^2(\G)$,
$$ \s_h(\m) = \s^*_h(\m)+\bar{\s}_h, \quad u_h(\m) = u^*_h(\m)+\bar{u}_h,
\quad \g_h(\m) = \g^*_h(\m)+\bar{\g}_h.
$$
Recalling \eqref{star-eqn1}--\eqref{star-eqn3} and
\eqref{bar1-1}--\eqref{bar1-3}, we
note that $(\s_h(\m),u_h(\m),\g_h(\m)) \in \X_h\times
V_h\times\W_h$ satisfy, for $1\le i \le n$,
\begin{align}
&\inp[A\s(\m)]{\tau}_{\O_i} + \inp[u_h(\m)]{\dvr{\tau}}_{\O_i} + \inp[\g_h(\m)]{\tau}_{\O_i} \nonumber\\ 
&\qquad\qquad\qquad\:\:\:\,= \gnp[g]{\t\,n}_{\partial \O_i \cap \G_D} + \gnp[\m]{\tau\, n_i}_{\G_i} &\forall \tau &\in \X_{h,i}, \label{mortar-err-eqn1}\\
&\inp[\dvr \s_h(\m)]{v}_{\O_i} = (f,v)_{\O_i} &\forall v &\in V_{h,i}, \label{mortar-err-eqn2}\\
&\inp[\s_h(\m)]{\xi}_{\O_i} = 0 &\forall \xi &\in \W_{h,i}\label{mortar-err-eqn3}.
\end{align}
We note that $(\s_h(\l_H), u_h(\l_H), \g_h(\l_H)) = (\s_h, u_h, \g_h)$
and that $(\s_h(u), u_h(u), \g_h(u))$ is the MFE approximation of the
true solution $(\s,u,\g)$ on each subdomain $\O_i$ with specified
boundary condition $u$ on $\Gamma_i$. We then have
\begin{align}
\| \s^*_h(u) - \s^*_h(\l_H) \| = \| \s_h(u) - \s_h(\l_H) \|
= \| \s_h(u) - \s_h \|
\le \| \s_h(u) - \s \| + \| \s - \s_h \| \label{mortar-err-1}.
\end{align}
The assertion of the theorem \eqref{mortar-estimate-1} follows from
\eqref{mortar-err-0}, \eqref{mortar-err-1}, Theorem
\ref{stress-err-estimate}, and the standard mixed method estimate
\eqref{error-est} for \eqref{mortar-err-eqn1}--\eqref{mortar-err-eqn3}.
\end{proof}

\subsection{Convergence for the rotation}
We first note that the result of Theorem~\ref{thm:condnum} holds
in the case of non-matching grids. In particular, it is easy to check that
its proof can be extended to this case, assuming that on each $\G_{i,j}$,
$C_1 \|\Q_{h,i}\m\|_{\G_{i,j}} \le \|\Q_{h,j}\m\|_{\G_{i,j}}
\le C_2 \|\Q_{h,i}\m\|_{\G_{i,j}}$ for all $\m \in \Lambda_H$.
It was shown in \cite{pencheva2003balancing} that this norm equivalence holds
for very general grid configurations. Therefore \eqref{condnum} implies that
$\|\cdot\|_a$ is a norm on $\Lambda_H$.

The stability of the subdomain MFE spaces $\X_{h,i}\times V_{h,i}
\times \W_{h,i}$ implies a subdomain inf-sup condition: there exists a
positive constant $\beta$ independent of
$h$ and $H$ such that, for all $v\in V_{h,i},\, \xi\in \W_{h,i}$,
\begin{align}
\sup_{0 \neq \t\in\X_{h,i} } \frac{\inp[\dvr \t]{v}_{\O_i}
+ \inp[\t]{\xi}_{\O_i} }{\|\t\|_{H(\dvr;\O_i,\M)} }
\ge \beta\left(\|v\|_{\O_i}+\|\xi\|_{\O_i}\right).
\end{align}
Then, using the error equation obtained by subtracting
\eqref{dd3-1} from \eqref{weak1}, we obtain
\begin{align*}
\|\Rc_h\g - \g_h\|_{\O_i} &\le C
\sup_{0 \neq \t\in\X_{h,i} } \frac{\inp[\dvr \t]{\P_h u - u_h}_{\O_i}
+ \inp[\t]{\Rc_h\g - \g_h}_{\O_i} }{\|\t\|_{H(\dvr;\O_i,\M)} } \\
& \le C \sup_{0 \neq \t\in\X_{h,i} }\frac{-\inp[A(\s-\s_h)]{\t}_{\O_i} +
\gnp[u - \l_H]{\t \, n_i}}{\|\t\|_{H(\dvr;\O_i,\M)}}\\
& \le C (\|\s-\s_h\|_{\O_i} + h^{-1/2}\|u - \l_H\|_{\G_i}),
\end{align*}
using the discrete trace inequality \eqref{trace-ineq} in the last inequality.
Summing over the subdomains results in the following theorem.

\begin{theorem}
For the rotation $\g_h$ of the mixed method
\eqref{dd3-1}--\eqref{dd3-4}, if \eqref{lambda-cond-eq} holds, then
there exists a positive constant $C$, independent of $h$ and $H$, such
that
$$
\|\Rc_h\g - \g_h\| \le C(\|\s-\s_h\| + h^{-1/2}\|u - \l_H\|_{\G}).
$$
\end{theorem}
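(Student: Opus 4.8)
The plan is to reduce the estimate, subdomain by subdomain, to the local inf-sup condition for the triple $\X_{h,i}\times V_{h,i}\times\W_{h,i}$ recorded just above, which holds with a constant $\beta$ independent of $h$ and $H$ since the elasticity MFE spaces are stable on the quasi-uniform subdomain grids. Applying it with $v=\P_h u-u_h\in V_{h,i}$ and $\xi=\Rc_h\g-\g_h\in\W_{h,i}$ controls $\|\Rc_h\g-\g_h\|_{\O_i}$ by the supremum over $0\neq\t\in\X_{h,i}$ of
$$
\big(\inp[\dvr\t]{\P_h u-u_h}_{\O_i}+\inp[\t]{\Rc_h\g-\g_h}_{\O_i}\big)\big/\|\t\|_{H(\dvr;\O_i,\M)},
$$
so it remains only to bound this numerator.

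The key step is the subdomain error equation. Using $A\s=\epsilon(u)$, integrating by parts over $\O_i$, and using $\t\,n_i=0$ on $\dO_i\cap\G_N$ together with $u=g_D$ on $\dO_i\cap\G_D$, the continuous problem \eqref{weak1} localizes to $\inp[A\s]{\t}_{\O_i}+\inp[u]{\dvr\t}_{\O_i}+\inp[\g]{\t}_{\O_i}=\gnp[u]{\t\,n_i}_{\G_i}+\gnp[g_D]{\t\,n_i}_{\dO_i\cap\G_D}$ for all $\t\in\X_{h,i}$. Subtracting \eqref{dd3-1}, using $\dvr\t\in V_{h,i}$, and passing $u$ and $\g$ to their $L^2$-projections in the volume pairings yields
\begin{equation*}
\inp[A(\s-\s_h)]{\t}_{\O_i}+\inp[\dvr\t]{\P_h u-u_h}_{\O_i}+\inp[\t]{\Rc_h\g-\g_h}_{\O_i}=\gnp[u-\l_H]{\t\,n_i}_{\G_i},\quad\forall\,\t\in\X_{h,i},
\end{equation*}
so the inf-sup numerator equals $-\inp[A(\s-\s_h)]{\t}_{\O_i}+\gnp[u-\l_H]{\t\,n_i}_{\G_i}$. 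The first piece is bounded by $C\|\s-\s_h\|_{\O_i}\|\t\|_{\O_i}$ via Cauchy--Schwarz and the upper bound in \eqref{coer-cont}; the second by $Ch^{-1/2}\|u-\l_H\|_{\G_i}\|\t\|_{\O_i}$ via Cauchy--Schwarz on $\G_i$ and the discrete trace inequality \eqref{trace-ineq}. Dividing by $\|\t\|_{H(\dvr;\O_i,\M)}\ge\|\t\|_{\O_i}$, taking the supremum, squaring and summing over the finitely many subdomains gives the assertion.

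I expect the only real care to be needed in deriving the local error equation --- keeping track of the boundary terms from the integration by parts so that the Dirichlet data cancels and only the $u-\l_H$ interface term remains, and justifying (via $\dvr\X_{h,i}=V_{h,i}$ and the structure of the stress and rotation spaces) that $u$ and $\g$ may be replaced by $\P_h u$ and $\Rc_h\g$. The $h^{-1/2}$ factor is intrinsic rather than an artifact: since $\l_H$ is defined on the coarse interface grid, the difference $u-\l_H$ can only be tested against the fine normal trace $\t\,n_i$ through the inverse-type bound \eqref{trace-ineq}.
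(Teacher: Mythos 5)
Your proposal is correct and follows essentially the same route as the paper: the subdomain inf-sup condition applied to $(\P_h u-u_h,\Rc_h\g-\g_h)$, the local error equation obtained by subtracting \eqref{dd3-1} from the localized form of \eqref{weak1}, and the discrete trace inequality \eqref{trace-ineq} to handle the interface term, followed by summation over subdomains. The extra detail you supply on deriving the local error equation (tracking the boundary terms and passing to the $L^2$-projections) is exactly what the paper leaves implicit.
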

\begin{remark}
The above result, combined with \eqref{condnum}, implies convergence
for the rotation reduced by $\mathcal{O}(h^{-1/2})$ compared to the
other variables, which is suboptimal. Since $\|\cdot\|_a$
is equivalent to a discrete $H^{1/2}(\G)$-norm, see
\cite{pencheva2003balancing}, one expects that
$\|u - \l_H\|_{\G} \le C h^{1/2} \|u - \l_H\|_{a}$, which is indeed observed
in the numerical experiments, and results in optimal convergence
for the rotation.
\end{remark}

\subsection{Multiscale stress basis implementation}
The algebraic system resulting from the multiscale mortar MFE method
\eqref{dd3-1}--\eqref{dd3-4} can be solved by reducing it to an
interface problem similar to \eqref{interface-prob}, as discussed in
Section~\ref{sec:method1}. The solution of the interface problem by
the CG method requires solving subdomain problems on each
iteration. The choice of a coarse mortar space $\Lambda_H$ results in
an interface problem of smaller dimension, which is less expensive to
solve. Nevertheless, the computational cost may be significant if many
CG iterations are needed for convergence.  Alternatively, following
the idea of a multiscale flux basis for the mortar mixed finite
element method for the Darcy problem
\cite{ganis2009implementation,wheeler2012multiscale}, we introduce a
multiscale stress basis.  This basis can be computed before the start
of the interface iteration and requires solving a fixed number of
Dirichlet subdomain problems, equal to the number of mortar degrees of
freedom per subdomain. Afterwards, an inexpensive linear combination
of the multiscale stress basis functions can replace the subdomain
solves during the interface iteration. Since this implementation
requires a relatively small fixed number of local fine scale solves,
it makes the cost of the method comparable to other multiscale
methods, see e.g. \cite{Efe-Gal-Hou} and references therein.

Let $A_H: \Lambda_H \to \Lambda_H$ be an interface operator such that
$\gnp[A_{H}\l]{\mu}_{\Gamma} = a(\l, \mu)$, $\forall \, \l, \, \mu \in \Lambda_H$.
Then the interface problem \eqref{interface-prob} can be rewritten as
$A_H \l_H = g_H$. We note that $A_H \l_H = \sum_{i=1}^n A_{H,i}\l_{H,i}$, where
$A_{H,i}: \Lambda_{H,i} \to \Lambda_{H,i}$ satisfies
$$
\gnp[A_{H,i}\l_{H,i}]{\mu}_{\Gamma_i} =
-\gnp[\sh{i}^*(\lambda_{H,i})n_i]{\mu}_{\G_i} \, \forall \, \mu \in \Lambda_{H,i}.
$$
Let $\Q_{h,i}: \Lambda_{H,i} \to \X_{h,i} n_i$ be the $L^2(\dO_i)$-projection
from the mortar space onto the normal trace of the subdomain velocity and let
$\Q_{h,i}^T: \X_{h,i} n_i \to \Lambda_{H,i}$ be the $L^2(\dO_i)$-projection
from the normal velocity trace onto the mortar space. Then the above implies that
$$
A_{H,i}\l_{H,i} = - \Q_{h,i}^T \sh{i}^*(\lambda_{H,i})n_i.
$$
We now describe the computation of the multiscale stress basis and its
use for computing the action of the interface operator
$A_{H,i}\l_{H,i}$.  Let $\{\phi^{(k)}_{H,i}\}_{k=1}^{N_{H,i}}$ denote
the basis functions of the mortar space $\Lambda_{H,i}$, where
$N_{H,i}$ is the number of mortar degrees of freedom on subdomain
$\O_i$. Then, for $\l_{H,i}\in \Lambda_{H,i}$ we have
$$
\l_{H,i} = \sum_{k=1}^{N_{H,i}} \l_{H,i}^{(k)} \phi^{(k)}_{H,i}.
$$
The computation of multiscale stress basis function $\psi^{(k)}_{H,i}
= A_{H,i}\phi^{(k)}_{H,i}$ is as follows.

\begin{algorithm}
\caption{Compute multiscale basis}
\label{alg:msb}
\begin{algorithmic}
\FOR{$k=1,\dots,N_{H,i}$}
\STATE{1. Project $\phi^{(k)}_{H,i}$ onto the subdomain boundary:
     $ \eta_{i}^{(k)} = \Q_{h,i}\phi^{(k)}_{H,i}$}
\STATE{2. Solve subdomain problem \eqref{star-eqn1}--\eqref{star-eqn3} with
     Dirichlet data $\eta_{i}^{(k)}$ for $\sh{i}^*(\eta_i^{(k)})$}
\STATE{3. Project the boundary normal stress onto the mortar space:\\
     $\qquad \psi^{(k)}_{H,i} = -(\Q_{h,i})^T \sh{i}^*(\eta_i^{(k)})\,n_i$}
\ENDFOR
\end{algorithmic}
\end{algorithm}

%
%
%
Once the multiscale stress basis is computed, the action of interface
operator $A_{H,i}$ involves only a simple linear combination of the
multiscale basis functions:
$$ A_{H,i}\l_{H,i} = A_{H,i} \left( \sum_{k=1}^{N_{H,i}}
\l_{H,i}^{(k)} \phi^{(k)}_{H,i} \right) = \sum_{k=1}^{N_{H,i}}
\l_{H,i}^{(k)} A_{H,i} \phi^{(k)}_{H,i} = \sum_{k=1}^{N_{H,i}}
\l_{H,i}^{(k)} \psi^{(k)}_{H,i}.
$$

\section{Numerical results}
In this section, we provide several numerical tests confirming the
theoretical convergence rates and illustrating the behavior of Method
1 on non-matching grids, testing both the conditioning of the
interface problem studied in Section~\ref{sec:method1} and the
convergence of the numerical errors of the multiscale mortar method
studied in Section~\ref{sec:mmmfe}.  The computational domain for all
examples is a unit hypercube partitioned with rectangular elements.
For simplicity, Dirichlet boundary conditions are specified on
the entire boundary in all examples. In 3
dimensions we employ the $\BDM_1 \times \mathcal{Q}_0 \times
\mathcal{Q}_0$ triple of elements proposed by Awanou
\cite{Awanou-rect-weak}, which are the rectangular analogues of the
lowest order Arnold-Falk-Winther simplicial elements
\cite{arnold2007mixed}. In 2 dimensions we use $\BDM_1 \times \mathcal{Q}_0
\times \mathcal{Q}_1^{cts}$, a modified triple of elements with
continuous $\Q_1$ space for rotation introduced in
\cite{MSMFEM-2}. This choice is of interest, since it allows for local
elimination of stress and rotation via the use of trapezoidal
quadrature rules, resulting in an efficient cell-centered scheme for
the displacement \cite{MSMFEM-2}.

We use the Method 1, with a displacement Lagrange multiplier, for all
tests. The CG method is employed for solving the symmetric and
positive definite interface problems. It is known
\cite{kelley1995iterative} that the number of iterations required for
the convergence of the CG method is $\mathcal{O}(\sqrt{\kappa})$,
where $\kappa$ is the condition number of the interface system.
According to the theory in Section~\ref{sec:method1}, $\kappa =
\mathcal{O}(h^{-1})$, hence the expected growth rate of the number of
iterations is $\mathcal{O}(h^{-1/2})$. We set the tolerance for the CG
method to be $\epsilon = 10^{-14}$ for all test cases and use the zero
initial guess for the interface data, i.e. $\lambda_H = 0$. We used deal.II
finite element library \cite{dealii} for the implementation of the method.

The convergence rates are established by running each test case on a
sequence of refined grids. The coarsest non-matching multiblock grid
consists of $2\times 2$ and $3\times 3$ subdomain grids in a
checkerboard fashion. The mortar grids on the coarsest level have only
one element per interface, i.e. $H = \frac12$.  In 2 dimensions, with
$\BDM_1 \times \mathcal{Q}_0 \times \mathcal{Q}_1^{cts}$, we have
$k=1$, $p = 0$, and $l = 1$. We test quadratic and cubic mortars.
According to Remark~\ref{r1}, $m=2$ and $H=\mathcal{O}(h^{4/5})$ or
$m=3$ and $H=\mathcal{O}(h^{4/7})$ should result in $\mathcal{O}(h^2)$
convergence. In the numerical test we take $H=2h$ for $m=2$ and
$H=h^{1/2}$ for $m=3$, which are easier to do in practice.  In 3
dimensions, with $\BDM_1 \times \mathcal{Q}_0 \times \mathcal{Q}_0$,
we have $k=1$, $p = l = 0$. We test linear mortars, $m=1$. From
Remark~\ref{r1}, the choice $H=\mathcal{O}(h^{2/3})$ should result in
$\mathcal{O}(h)$ convergence. In the numerical test we take
$H=2h$. The theoretically predicted convergence rates for these
choices of finite elements and subdomain and mortar grids
are shown in Table \ref{tab:t0}.
\begingroup
\def\arraystretch{1.2}
\begin{table}[H]
\scriptsize
\begin{center}
\begin{tabular}{c|c|c|c|c|c|c|c}
     \hline
     \multicolumn{8}{c}{$\BDM_1 \times \mathcal{Q}_0 \times \mathcal{Q}_1^{cts}$ ($k=1,\, l=0,\, p=1$) \mbox{ in 2 dimensions}} \\ \hline
     $m$
     & $H$
     & $ \|\sigma - \sigma_h\| $
     & $ \|\dvr(\sigma - \sigma_h)\| $
     & $ \|u - u_h\|$
     & $ \|\P_h u - u_h\| $
     & $ \|\g - \g_h\| $
     & $ \|u - \lambda_H\|_{a}$ \\
     \hline
     2 & $2h$      & 2 & 1 & 1 & 2 & 2 & 2 \\
     3 & $h^{1/2}$ & 2 & 1 & 1 & 2 & 2 & 2 \\\hline

     \multicolumn{8}{c}{$\BDM_1 \times \mathcal{Q}_0 \times \mathcal{Q}_0$ ($k=1,\, l=0,\, p=0$) \mbox{ in 3 dimensions}} \\ \hline
     $m$
     & $H$
     & $ \|\sigma - \sigma_h\| $
     & $ \|\dvr(\sigma - \sigma_h)\| $
     & $ \|u - u_h\|$
     & $ \|\P_h u - u_h\| $
     & $ \|\g - \g_h\| $
     & $ \|u - \lambda_H\|_{a}$ \\
     \hline
     1 & $2h$      & 1 & 1 & 1 & 2 & 1 & 1 \\
     \hline

\end{tabular}
\end{center}
\caption{Theoretical convergence rates for the choices of
finite elements and mortars in the numerical tests.} \label{tab:t0}
\end{table}
\endgroup

In the first three examples we test the convergence rates and the
condition number of the interface operator. The error $\|\P_h u -
u_h\|$ is approximated by the discrete $L^2$-norms computed by the
midpoint rule on $\mathcal{T}_h$, which is known to be
$\mathcal{O}(h^2)$-close to $\|\P_h u-u_h\|$.  The mortar displacement
error $ \|u - \lambda_H\|_{a}$ is computed in accordance with the
definition of the interface bilinear form $a(\cdot,\cdot)$.  In all
cases we observe that the rates of convergence agree with the
theoretically predicted ones. Also, in all cases the number of
CG iterations grows with rate $\mathcal{O}(h^{-1/2})$, confirming
the theoretical condition number $\kappa = \mathcal{O}(h^{-1})$.

\subsection{Example 1}
In the first example we solve a two-dimensional problem with a known
analytical solution
\begin{align*}
u = \begin{pmatrix} x^3y^4 + x^2 + \sin(xy)\cos(y) \\
x^4y^3 + y^2 + \cos(xy)\sin(x) \end{pmatrix}.
\end{align*}
The Poisson's ratio is $\nu = 0.2$ and the Young's modulus
is $E = \sin(3\pi x)\sin(3\pi y) + 5$, with the Lam\'{e} parameters determined by
\begin{align*}
\lambda = \frac{E\nu}{(1-\nu)(1-2\nu)}, \quad \mu = \frac{E}{2(1+2\nu)}.
\end{align*}
Relative errors, convergence rates, and number of interface
iterations are provided in Tables \ref{tab:t1} and
\ref{tab:t2}. The computed solution is plotted in Figure~\ref{fig:1}.
\setlength\tabcolsep{2.5pt}
\begingroup
\def\arraystretch{1.1}
\begin{table}[H]
\scriptsize
\begin{center}
\begin{tabular}{c|cc|cc|cc|cc|cc|cc|cc}
     \hline
     & \multicolumn{2}{c|}{$\|\sigma - \sigma_h\| $ }  & \multicolumn{2}{c|}{$ \|\dvr(\sigma - \sigma_h)\| $} & \multicolumn{2}{c|}{$ \|u - u_h\|$} & \multicolumn{2}{c|}{$ \|\P_hu - u_h\| $}  &
     \multicolumn{2}{c|}{$\|\g - \g_h\| $} & \multicolumn{2}{c|}{$ \|u - \lambda_H\|_{a}$} & \multicolumn{2}{c}{CG iter.}
     \\
     $h$	&	error	&        rate   &	error	&	rate	&	error	&	rate	&	error	&	rate
     &	error	&	rate	&	error	&	rate	&	\#	&	rate             \\
     \hline
     1/4	&	2.02E-1	&       -       &	5.64E-1	&	-	&	4.57E-1	&	-	&	2.54E-1	&	-
     &	4.08E-1	&	-	&	5.01E-1	&	-	&	24	&	-			\\
     1/8	&	5.43E-2	&       1.9     &	2.98E-1	&	0.9	&	2.12E-1	&	1.1	&	7.14E-2	&	1.8
     &	1.04E-1	&	2.0	&	1.33E-1	&	1.9	&	33	&	-0.4			\\
     1/16	&	1.37E-2	&       2.0     &	1.51E-1	&	1.0	&	1.04E-1	&	1.0	&	1.84E-2	&	2.0
     &	2.60E-2	&	2.0	&	3.25E-2	&	2.0	&	48	&	-0.5			\\
     1/32	&	3.42E-3	&       2.0     &	7.58E-2	&	1.0	&	5.15E-2	&	1.0	&	4.63E-3	&	2.0
     &	6.47E-3	&	2.0	&	7.83E-3	&	2.1	&	63	&	-0.5			\\
     1/64	&	8.53E-4	&       2.0     &	3.79E-2	&	1.0	&	2.57E-2	&	1.0	&	1.16E-3	&	2.0
     &	1.61E-3	&	2.0	&	1.88E-3	&	2.1	&	96	&	-0.5			\\
     1/128	&	2.13E-4	&       2.0     &	1.90E-2	&	1.0	&	1.28E-2	&	1.0	&	2.90E-4	&	2.0
     &	4.02E-4	&	2.0	&	4.55E-4	&	2.1	&	136	&	-0.6			\\
     1/256	&	5.33E-5	&       2.0     &	9.48E-3	&	1.0	&	6.42E-3	&	1.0	&	7.25E-5	&	2.0
     &	1.00E-4	&	2.0	&	1.10E-4	&	2.0	&	194	&	-0.5			\\	\hline
\end{tabular}
\end{center}
\caption{Numerical errors, convergence rates, and number of CG iterations with
discontinuous quadratic mortars ($m=2$) for Example 1.} \label{tab:t1}
\end{table}
\endgroup
\begingroup
\def\arraystretch{1.1}
\begin{table}[H]
\scriptsize
\begin{center}
\begin{tabular}{c|cc|cc|cc|cc|cc|cc|cc}
     \hline
     & \multicolumn{2}{c|}{$\|\sigma - \sigma_h\| $ }  & \multicolumn{2}{c|}{$ \|\dvr(\sigma - \sigma_h)\| $} & \multicolumn{2}{c|}{$ \|u - u_h\|$} & \multicolumn{2}{c|}{$ \|\P_hu - u_h\| $}  &
     \multicolumn{2}{c|}{$\|\g - \g_h\| $} & \multicolumn{2}{c|}{$ \|u - \lambda_H\|_{a}$} & \multicolumn{2}{c}{CG iter.}
     \\
     $h$	&	error	&        rate   &	error	&	rate	&	error	&	rate	&	error	&	rate
     &	error	&	rate	&	error	&	rate	&	\#	&	rate             \\
     \hline
     1/4	&	4.05E-2	&	-	&	3.75E-1	&	-	&	1.36E-1	&	-	&	1.09E-2	&	-
     &	1.79E-1	&	-	&	1.99E-2	&	-	&	26	&	-		\\
     1/16	&	3.35E-3	&	1.8	&	1.11E-1	&	0.9	&	3.41E-2	&	1.0	&	9.13E-4	&	1.8
     &	1.06E-2	&	2.0	&	9.42E-4	&	2.2	&	46	&	-0.4		\\
     1/64	&	2.14E-4	&	2.0	&	2.80E-2	&	1.0	&	8.53E-3	&	1.0	&	5.84E-5	&	2.0
     &	6.74E-4	&	2.0	&	4.97E-5	&	2.1	&	78	&	-0.4		\\
     1/256	&	1.34E-5	&	2.0	&	7.01E-3	&	1.0	&	2.13E-3	&	1.0	&	3.62E-6	&	2.0
     &	4.19E-5	&	2.0	&	2.63E-6	&	2.1	&	124	&	-0.3		\\	\hline
\end{tabular}
\end{center}
\caption{Numerical errors, convergence rates, and number of CG iterations with discontinuous cubic mortars ($m=3$) for Example 1.} \label{tab:t2}
\end{table}
\endgroup
\begin{figure}[ht!]
\centering
\begin{subfigure}[b]{0.24\textwidth}
\includegraphics[width=\textwidth]{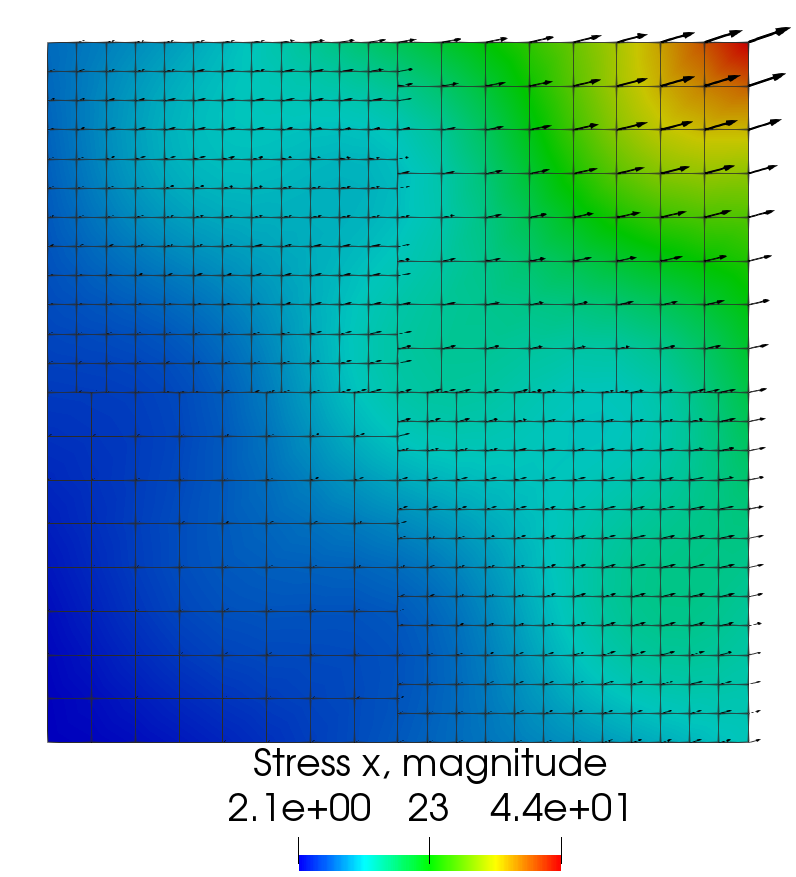}
\label{fig:1_1}
\end{subfigure}
\begin{subfigure}[b]{0.24\textwidth}
\includegraphics[width=\textwidth]{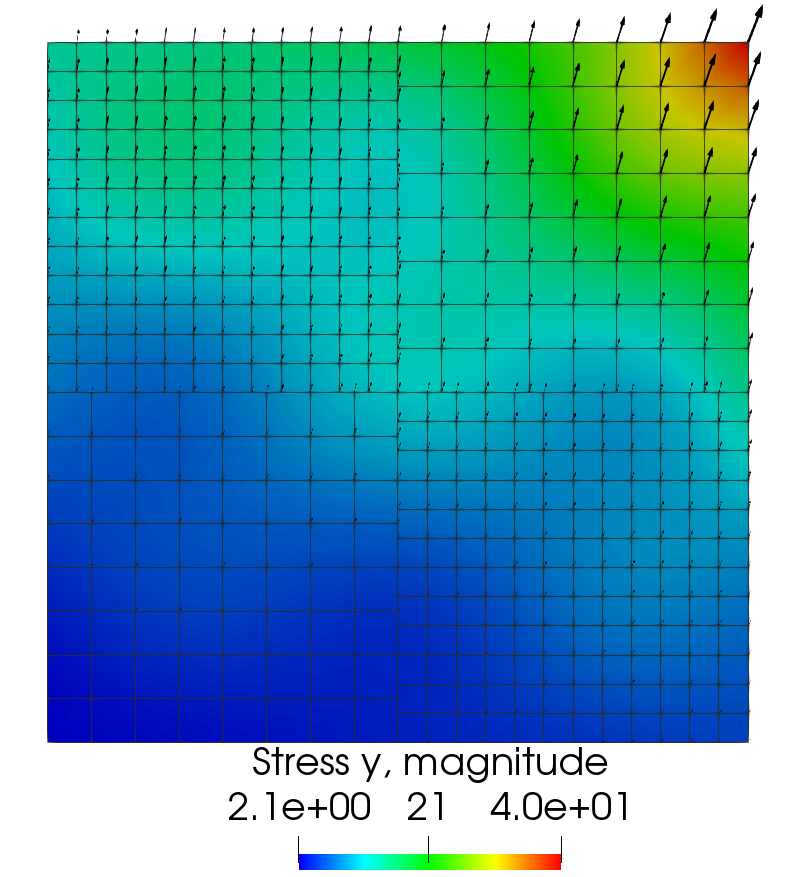}
\label{fig:1_2}
\end{subfigure}
\begin{subfigure}[b]{0.24\textwidth}
\includegraphics[width=\textwidth]{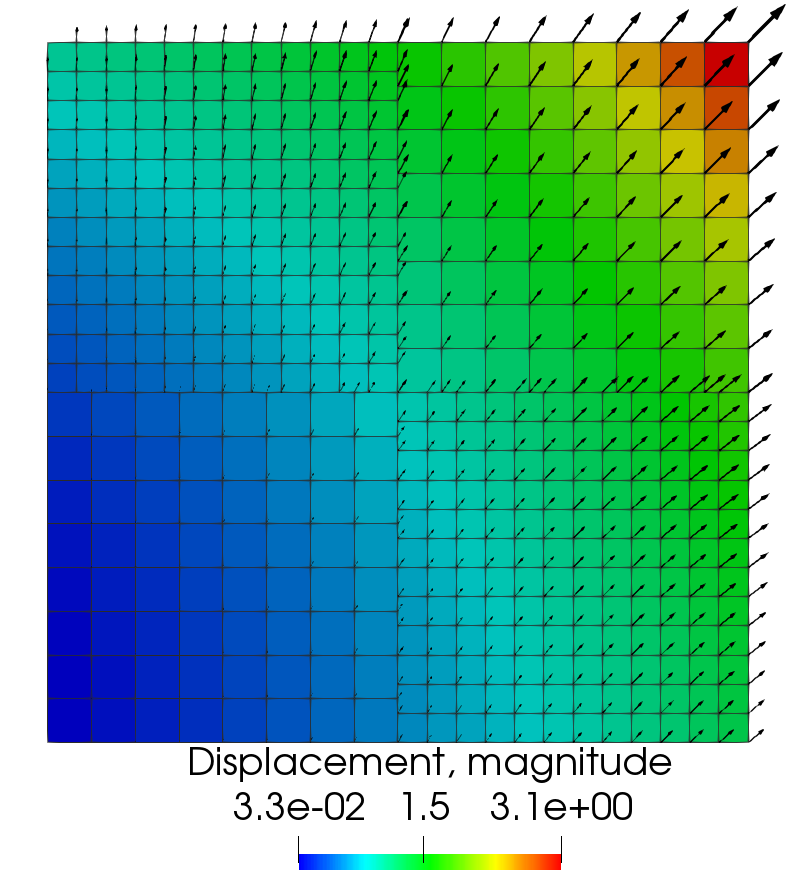}
\label{fig:1_3}
\end{subfigure}
\begin{subfigure}[b]{0.24\textwidth}
\includegraphics[width=\textwidth]{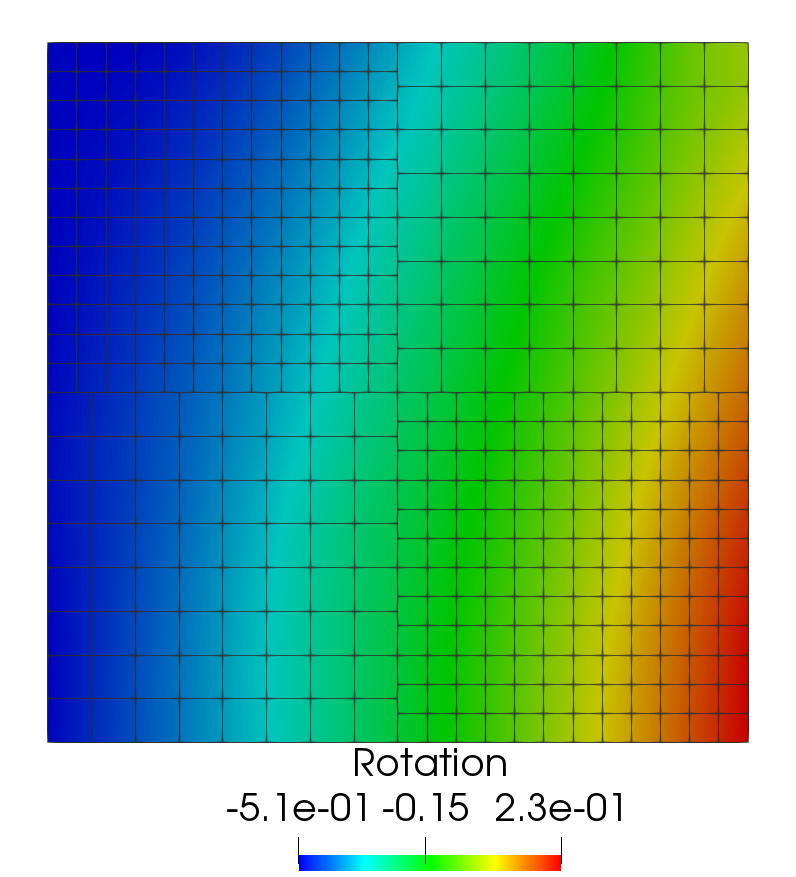}
\label{fig:1_4}
\end{subfigure}
\caption{Computed solution for Example 1, $h=1/16$.}\label{fig:1}
\end{figure}
\subsection{Example 2}
In the second example, we solve a problem with discontinuous Lam\'{e}
parameters. We choose $\lambda=\mu=1$ for $0 < x < 0.5$ and
$\lambda=\mu=10$ for $0.5 < x < 1$. The solution
\begin{align*}
u = \begin{pmatrix} x^2y^3 - x^2y^3\sin(\pi x) \\
x^2y^3 - x^2y^3\sin(\pi x) \end{pmatrix}
\end{align*}
is chosen to be continuous with continuous normal stress and rotation
at $x=0.5$. Convergence rates are provided in Tables
\ref{tab:t3} and \ref{tab:t4}. The computed solution is plotted in Figure~\ref{fig:2}.
\begingroup
\def\arraystretch{1.1}
\begin{table}[H]
\scriptsize
\begin{center}
\begin{tabular}{c|cc|cc|cc|cc|cc|cc|cc}
     \hline
     & \multicolumn{2}{c|}{$\|\sigma - \sigma_h\| $ }  & \multicolumn{2}{c|}{$ \|\dvr(\sigma - \sigma_h)\| $} & \multicolumn{2}{c|}{$ \|u - u_h\|$} & \multicolumn{2}{c|}{$ \|\P_hu - u_h\| $}  &
     \multicolumn{2}{c|}{$\|\g - \g_h\| $} & \multicolumn{2}{c|}{$ \|u - \lambda_H\|_{a}$} & \multicolumn{2}{c}{CG iter.}
     \\
     $h$	&	error	&        rate   &	error	&	rate	&	error	&	rate	&	error	&	rate
     &	error	&	rate	&	error	&	rate	&	\#	&	rate             \\
     \hline
     1/4	&	2.02E-1	&	-	&	5.64E-1	&	-	&	4.57E-1	&	-	&	2.54E-1	&	-
     &	4.08E-1	&	-	&	5.01E-1	&	-	&	45	&	-		\\
     1/8	&	5.43E-2	&	1.9	&	2.98E-1	&	0.9	&	2.12E-1	&	1.1	&	7.14E-2	&	1.8
     &	1.04E-1	&	2.0	&	1.33E-1	&	1.9	&	61	&	-0.4		\\
     1/16	&	1.37E-2	&	2.0	&	1.51E-1	&	1.0	&	1.04E-1	&	1.0	&	1.84E-2	&	2.0
     &	2.60E-2	&	2.0	&	3.25E-2	&	2.0	&	85	&	-0.5		\\
     1/32	&	3.42E-3	&	2.0	&	7.58E-2	&	1.0	&	5.15E-2	&	1.0	&	4.63E-3	&	2.0
     &	6.47E-3	&	2.0	&	7.83E-3	&	2.1	&	122	&	-0.5		\\
     1/64	&	8.53E-4	&	2.0	&	3.79E-2	&	1.0	&	2.57E-2	&	1.0	&	1.16E-3	&	2.0
     &	1.61E-3	&	2.0	&	1.88E-3	&	2.1	&	170	&	-0.5		\\
     1/128	&	2.13E-4	&	2.0	&	1.90E-2	&	1.0	&	1.28E-2	&	1.0	&	2.90E-4	&	2.0
     &	4.02E-4	&	2.0	&	4.55E-4	&	2.1	&	252	&	-0.6		\\
     1/256	&	5.33E-5	&	2.0	&	9.48E-3	&	1.0	&	6.42E-3	&	1.0	&	7.25E-5	&	2.0
     &	1.00E-4	&	2.0	&	1.10E-4	&	2.0	&	354	&	-0.5		\\
     \hline
\end{tabular}
\end{center}
\caption{Numerical errors, convergence rates, and number of CG iterations with discontinuous quadratic mortars ($m=2$) for Example 2.}
\label{tab:t3}
\end{table}
\endgroup
\begingroup
\def\arraystretch{1.1}
\begin{table}[H]
\scriptsize
\begin{center}
\begin{tabular}{c|cc|cc|cc|cc|cc|cc|cc}
     \hline
     & \multicolumn{2}{c|}{$\|\sigma - \sigma_h\| $ }  & \multicolumn{2}{c|}{$ \|\dvr(\sigma - \sigma_h)\| $} & \multicolumn{2}{c|}{$ \|u - u_h\|$} & \multicolumn{2}{c|}{$ \|\P_hu - u_h\| $}  &
     \multicolumn{2}{c|}{$\|\g - \g_h\| $} & \multicolumn{2}{c|}{$ \|u - \lambda_H\|_{a}$} & \multicolumn{2}{c}{CG iter.}
     \\
     $h$	&	error	&        rate   &	error	&	rate	&	error	&	rate	&	error	&	rate
     &	error	&	rate	&	error	&	rate	&	\#	&	rate             \\
     \hline
     1/4	&	2.04E-1	&	-	&	5.64E-1	&	-	&	4.58E-1	&	-	&	2.54E-1	&	-
     &	4.04E-1	&	-	&	5.11E-1	&	-	&	52	&	-			\\
     1/16	&	1.37E-2	&	1.9	&	1.51E-1	&	1.0	&	1.04E-1	&	1.1	&	1.85E-2	&	1.9
     &	2.62E-2	&	2.0	&	3.27E-2	&	2.0	&	83	&	-0.3			\\
     1/64	&	8.68E-4	&	2.0	&	3.79E-2	&	1.0	&	2.57E-2	&	1.0	&	1.16E-3	&	2.0
     &	1.71E-3	&	2.0	&	1.90E-3	&	2.1	&	135	&	-0.4			\\
     1/256	&	5.51E-5	&	2.0	&	9.48E-3	&	1.0	&	6.42E-3	&	1.0	&	7.23E-5	&	2.0
     &	1.15E-4	&	2.0	&	1.19E-4	&	2.0	&	211	&	-0.3			\\	\hline
\end{tabular}
\end{center}
\caption{Numerical errors, convergence rates, and number of CG iterations with discontinuous cubic mortars ($m=3$) for Example 2.}
\label{tab:t4}
\end{table}
\endgroup
\begin{figure}[ht!]
\centering
\begin{subfigure}[b]{0.24\textwidth}
\includegraphics[width=\textwidth]{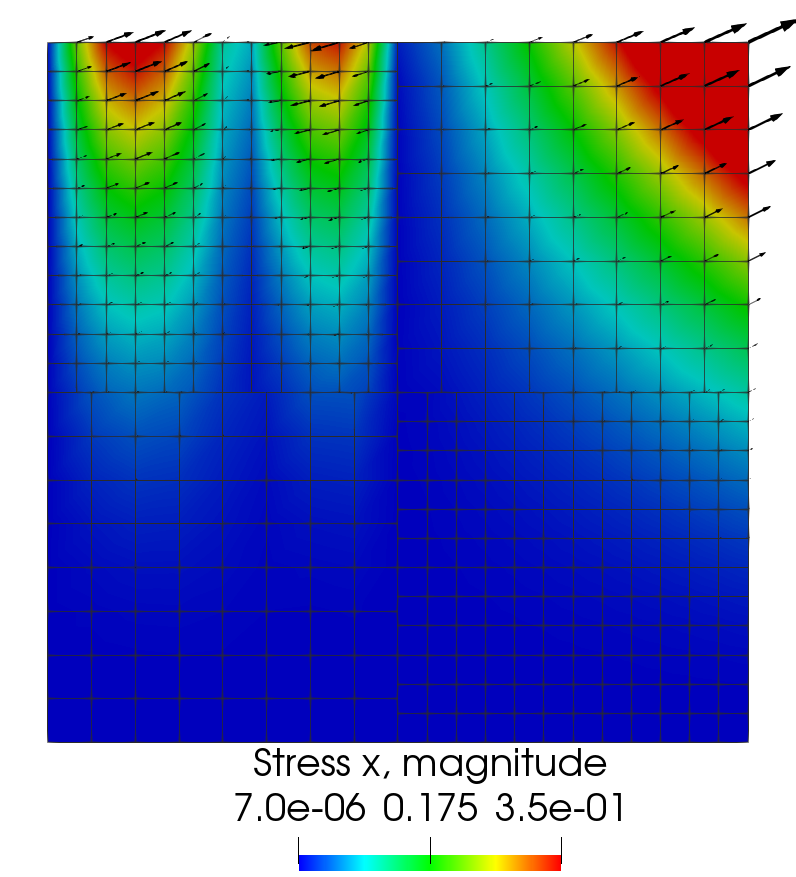}
\label{fig:2_1}
\end{subfigure}
\begin{subfigure}[b]{0.24\textwidth}
\includegraphics[width=\textwidth]{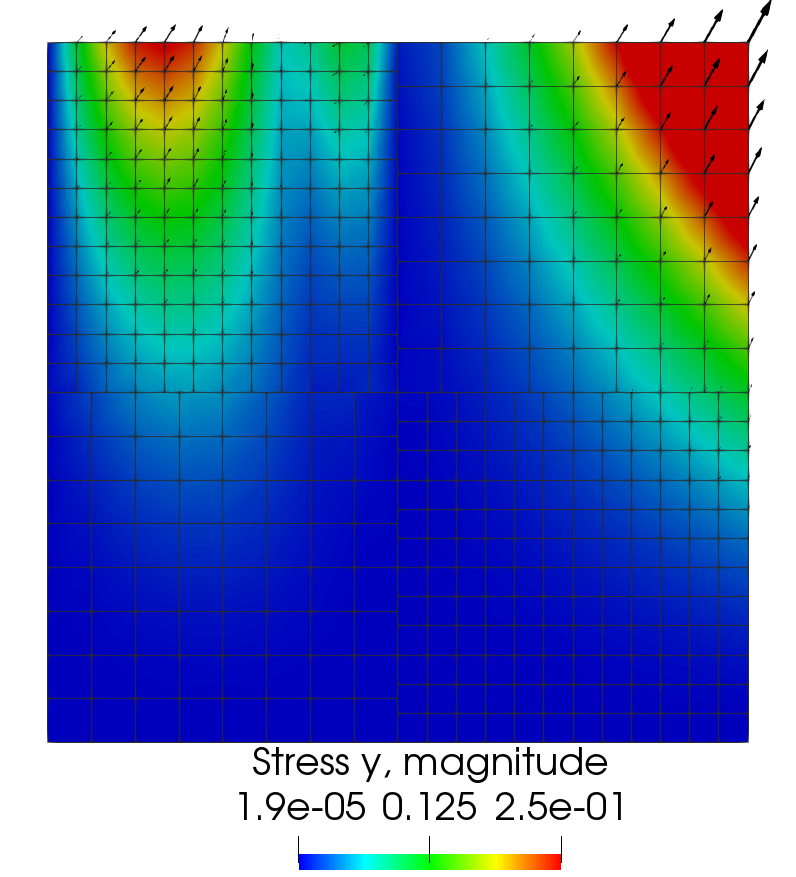}		
\label{fig:2_2}
\end{subfigure}
\begin{subfigure}[b]{0.24\textwidth}
\includegraphics[width=\textwidth]{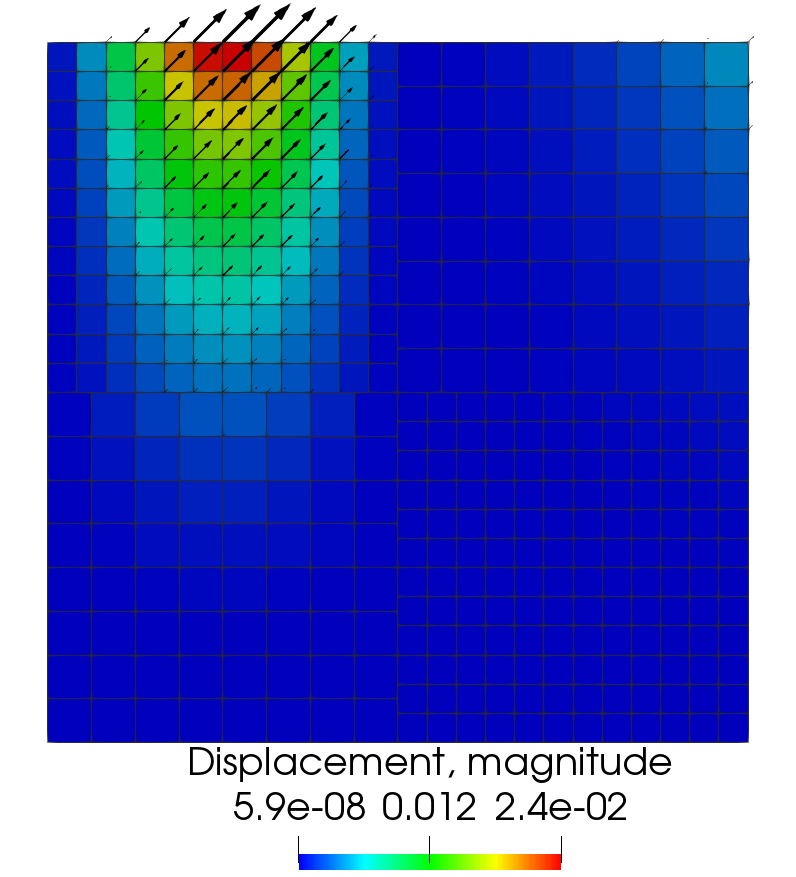}
\label{fig:2_3}
\end{subfigure}
\begin{subfigure}[b]{0.24\textwidth}
\includegraphics[width=\textwidth]{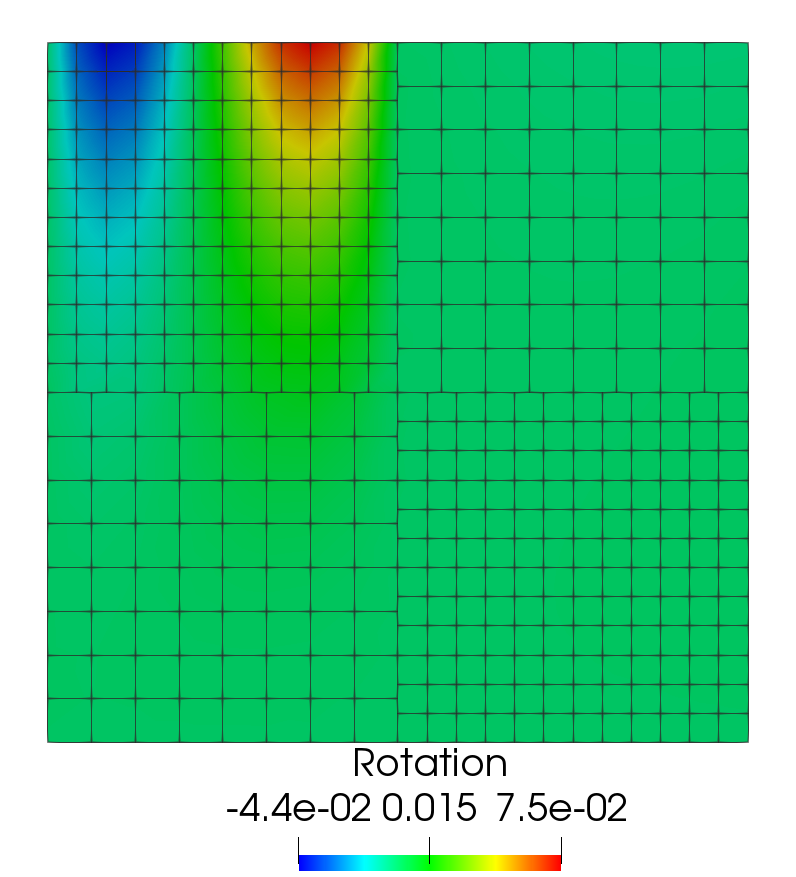}
\label{fig:2_4}
\end{subfigure}
\caption{Computed solution for Example 2, $h=1/16$.}\label{fig:2}
\end{figure}
\subsection{Example 3}
In third example we study a three-dimensional problem, which models
simultaneous twisting and compression (about $x$-axis) of the unit cube. The displacement solution is
$$
u = \begin{pmatrix} -0.1(e^x - 1)\sin(\pi x)\sin(\pi y) \\
-(e^x - 1)(y - \cos(\frac{\pi}{12})(y-0.5) + \sin(\frac{\pi}{12})(z-0.5)-0.5) \\
-(e^x - 1)(z - \sin(\frac{\pi}{12})(y-0.5) - \cos(\frac{\pi}{12})(z-0.5)-0.5)
\end{pmatrix}.
$$
The Lam\'{e} parameters are $\lambda = \mu = 100$.
The computed relative errors, convergence rates, and the number of interface iterations are shown in Table~\ref{tab:t5}.
We note that the mortar displacement exhibits slightly higher convergence rate than the theoretical rate.
The computed solution is plotted in Figure~\ref{fig:3}.
\begingroup
\def\arraystretch{1.1}
\begin{table}[H]
\scriptsize
\begin{center}
\begin{tabular}{c|cc|cc|cc|cc|cc|cc|cc}
     \hline
     & \multicolumn{2}{c|}{$\|\sigma - \sigma_h\| $ }  & \multicolumn{2}{c|}{$ \|\dvr(\sigma - \sigma_h)\| $} & \multicolumn{2}{c|}{$ \|u - u_h\|$} & \multicolumn{2}{c|}{$ \|\P_hu - u_h\| $}  &
     \multicolumn{2}{c|}{$\|\g - \g_h\| $} & \multicolumn{2}{c|}{$ \|u - \lambda_H\|_{a}$} & \multicolumn{2}{c}{CG iter.}
     \\
     $h$	&	error	&        rate   &	error	&	rate	&	error	&	rate	&	error	&	rate
     &	error	&	rate	&	error	&	rate	&	\#	&	rate             \\
     \hline
     1/4	&	2.71E-1	&	-	&	3.85E-1	&	-	&	2.60E-1	&	-	&	3.87E-2	&	-
     &	1.37E-1	&	-	&	2.80E-2	&	-	&	21	&	-		\\
     1/8	&	1.22E-1	&	1.2	&	1.96E-1	&	1.0	&	1.31E-1	&	1.0	&	8.40E-3	&	2.2
     &	6.83E-2	&	1.0	&	7.99E-3	&	1.8	&	37	&	-0.8		\\
     1/16	&	5.79E-2	&	1.1	&	9.87E-2	&	1.0	&	6.54E-2	&	1.0	&	2.09E-3	&	2.0
     &	3.41E-2	&	1.0	&	2.39E-3	&	1.7	&	56	&	-0.6		\\
     1/32	&	2.82E-2	&	1.0	&	4.94E-2	&	1.0	&	3.27E-2	&	1.0	&	5.31E-4	&	2.0
     &	1.71E-2	&	1.0	&	8.18E-4	&	1.6	&	80	&	-0.5		\\	\hline
\end{tabular}
\end{center}
\caption{Numerical errors, convergence rates, and number of CG iterations with discontinuous linear mortars ($m=1$) for Example 3.}
\label{tab:t5}
\end{table}
\endgroup
\begin{figure}[ht!]
\centering
\begin{subfigure}[b]{0.19\textwidth}
\includegraphics[width=\textwidth]{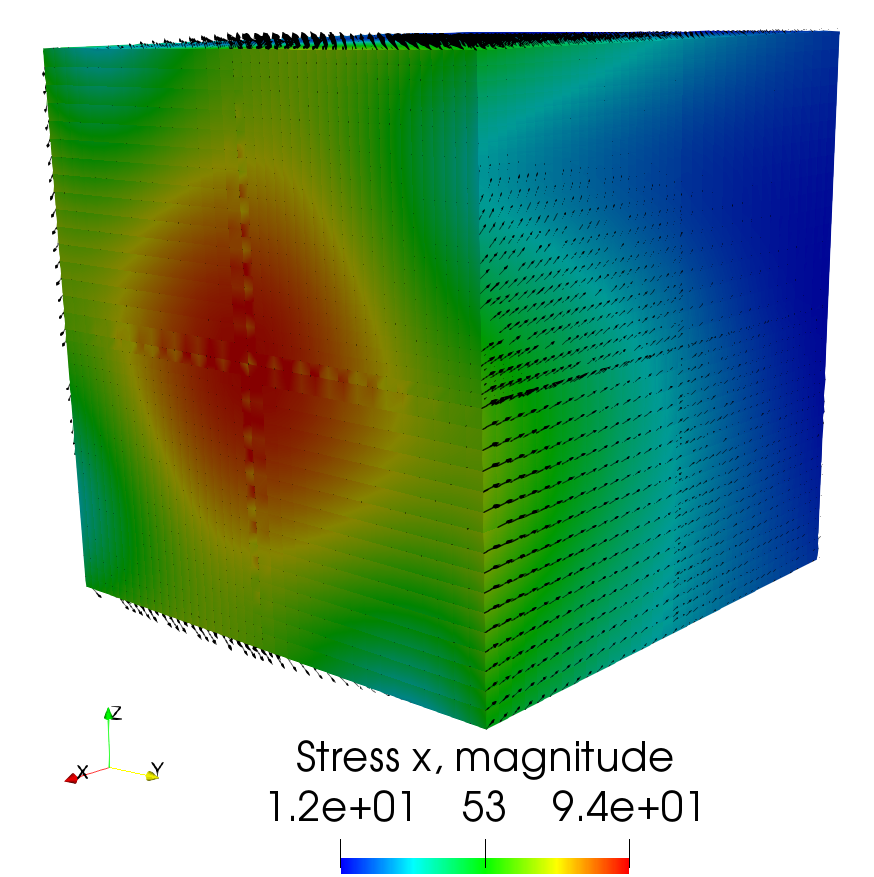}
\label{fig:3_1}
\end{subfigure}
\begin{subfigure}[b]{0.19\textwidth}
\includegraphics[width=\textwidth]{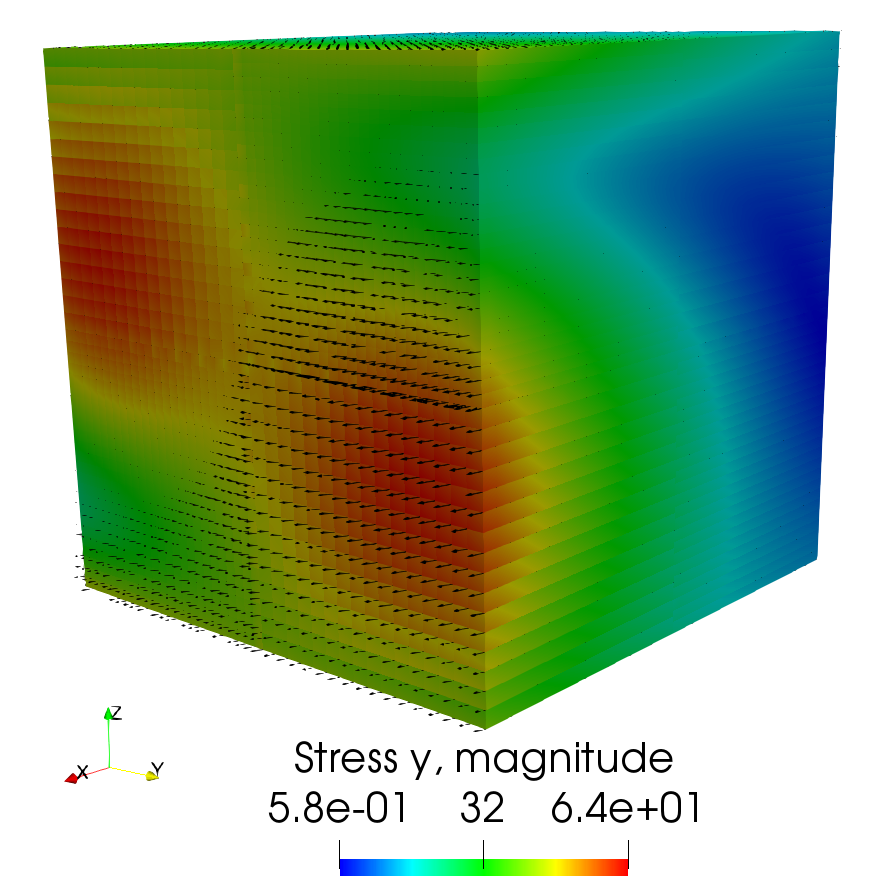}		
\label{fig:3_2}
\end{subfigure}
\begin{subfigure}[b]{0.19\textwidth}
\includegraphics[width=\textwidth]{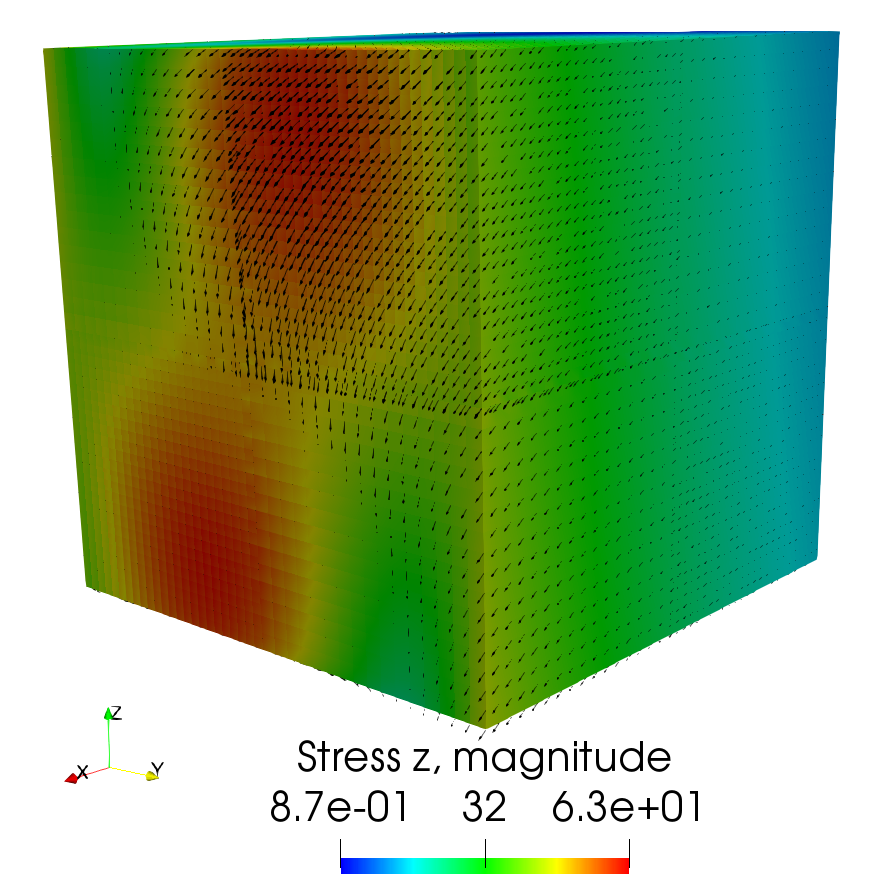}		
\label{fig:3_3}
\end{subfigure}
\begin{subfigure}[b]{0.19\textwidth}
\includegraphics[width=\textwidth]{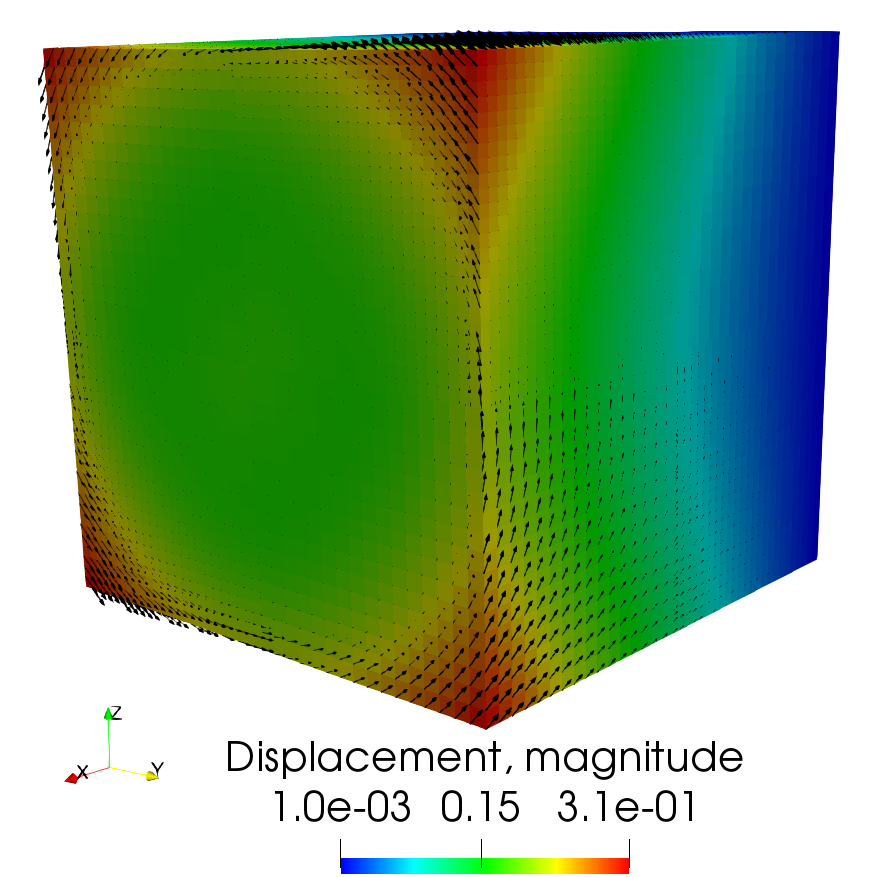}
\label{fig:3_4}
\end{subfigure}
\begin{subfigure}[b]{0.19\textwidth}
\includegraphics[width=\textwidth]{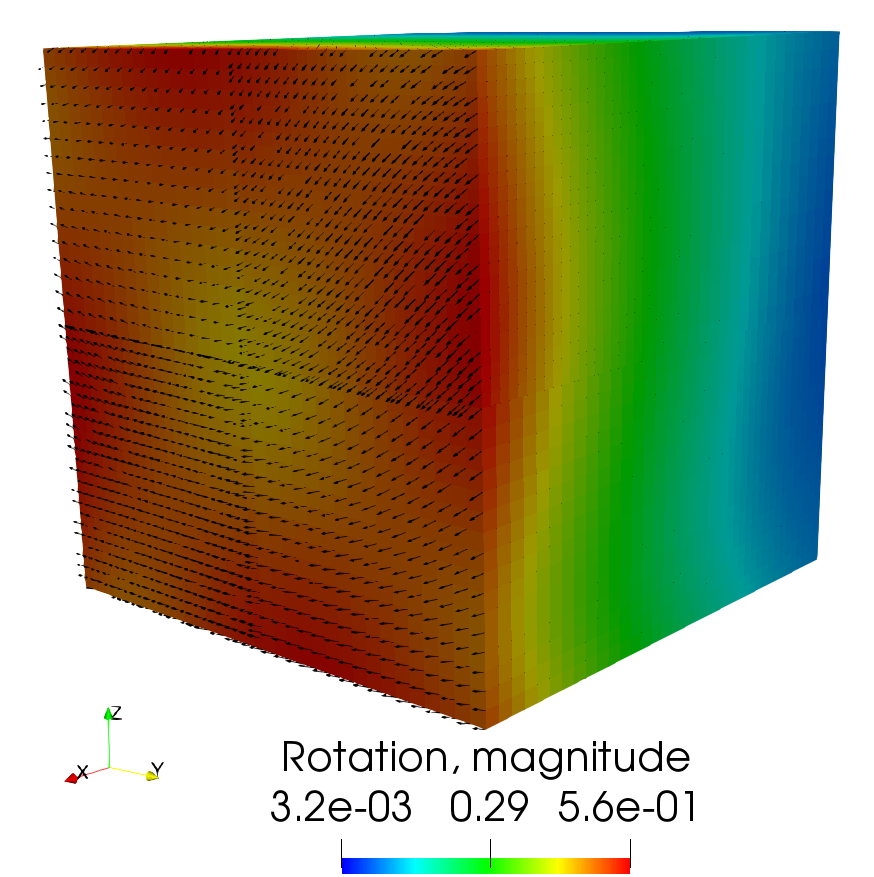}
\label{fig:3_5}
\end{subfigure}
\caption{Computed solution for Example 3, $h=1/32$.}\label{fig:3}
\end{figure}
\subsection{Example 4}
In this example we study the dependence of the
number of CG iterations on the number of subdomains used for solving the
problem. We consider the same test case as in Example 1 with
discontinuous quadratic mortars, but solve the
problem using $2\times 2$, $4\times 4$ and $8\times 8$ subdomain
partitionings. We report the number of CG iterations in Table
\ref{tab:t6}. For the sake of space and clarity we do not show the
rate of growth for each refinement step, but only the
average values. For each fixed domain decomposition (each column)
we observe growth of $\mathcal{O}(h^{-0.5})$ as the grids are refined,
confirming condition number $\kappa = \mathcal{O}(h^{-1})$, as in the previous
examples with $2\times 2$ decompositions. Considering each row,
we observe that the number of CG iterations grows as the subdomain
size $A$ decreases with rate $\mathcal{O}(A^{-0.5})$, implying that
$\kappa = \mathcal{O}(A^{-1})$. This is expected for an algorithm
without a coarse solve preconditioner \cite{Toselli-Widlund}. This
issue will be addressed in forthcoming work.
\begingroup
\def\arraystretch{1.2}
\begin{table}[H]
\scriptsize
\begin{center}
\begin{tabular}{c|c|c|c|c}
     \hline
     $h$
     & $2 \times 2$
     & $4 \times 4$
     & $8 \times 8$
     & Rate \\
     \hline
     1/16	&	48		&	67	&	94	& $\mathcal{O}(A^{-0.5})$	\\
     1/32	&	63		&	94	&	118	& $\mathcal{O}(A^{-0.5})$	\\
     1/64	&	96		&	133	&	167	& $\mathcal{O}(A^{-0.4})$	\\
     1/128	&	136		&	189	&	230	& $\mathcal{O}(A^{-0.4})$	\\
     1/256	&	194		&	267	&	340     & $\mathcal{O}(A^{-0.4})$	\\ \hline
     Rate	&	$\mathcal{O}(h^{-0.5})$	& $\mathcal{O}(h^{-0.5})$	& $\mathcal{O}(h^{-0.5})$	&\\ \hline
\end{tabular}
\end{center}
\caption{Number of CG iterations for Example 4.} \label{tab:t6}
\end{table}
\endgroup

\subsection{Example 5}

In the last example we test the efficiency of the multiscale stress
basis (MSB) technique outlined in the previous section. With no MSB
the total number of solves is $\#\text{CG iter.} + 3$, one for each CG
iteration plus one solve for the right hand side of type
\eqref{bar1-1}--\eqref{bar1-3}, one for the initial residual and one
to recover the final solution.  On the other hand, the method with MSB
requires $\text{dim}(\Lambda_H) + 3$ solves, hence its use is
advantageous when $\text{dim}(\Lambda_h)<\#\text{CG iter.}$, that is
when the mortar grid is relatively coarse.

We use a heterogeneous porosity field from the Society of Petroleum
Engineers (SPE) Comparative Solution
Project2\footnote{http://www.spe.org/csp}.  The computation domain is
$\O = (0,1)^2$ with a fixed rectangular $128\times 128$ grid. The left
and right boundary conditions are $u = (0.1,0)^T$ and $u =
(0,0)^T$. Zero normal stress, $\s\, n = 0$, is specified on the top
and bottom boundaries. Given the porosity $\phi$, the Young's modulus
is obtained from the relation \cite{kovavcik1999correlation} $
E = 10^2 \left( 1-\frac{\phi}{c} \right)^{2.1}$, where the constant $c
= 0.5$ refers to the porosity at which the effective Young's modulus
becomes zero. The choice of this constant is based on the properties
of the deformable medium, see \cite{kovavcik1999correlation} for
details. The resulting Young's modulus field is shown in Figure
\ref{fig:4}.

A comparison between the fine scale solution and the multiscale
solution with $8\times 8$ subdomains and a single cubic mortar per
interface is shown in Figure \ref{fig:4}. We observe that the two
solutions are very similar and that the multiscale solution captures
the heterogeneity very well, even for this very coarse mortar space.
In Table~\ref{tab:t7} we compare the cost of using MSB and not using
MSB for several choices of mortar grids. We report the number of
solves per subdomain, which is the dominant computational cost. We
conclude that for cases with relatively coarse mortar grids, the MSB
technique requires significantly fewer subdomain solves, resulting in
faster computations. Moreover, as evident from the last row in
Table~\ref{tab:t7}, computing the fine scale solution is significantly
more expensive than computing the multiscale solution.
\begin{table}[H]
\scriptsize
\begin{center}
\begin{tabular}{l|c|c|c}
     \hline
     Mortar type
     & $H$
     & \# Solves, no MSB
     & \# Solves, MSB \\
     \hline
     Quadratic					&	1/8			&	180	&	27	\\
     Cubic						&	1/8			&	173	&	35  \\
     Quadratic					&	1/16		&	219	&	51  \\
     Cubic						&	1/16		&	250	&	67  \\
     Linear (fine scale solution)&	1/128		&	295	&	195 \\ \hline
\end{tabular}
\end{center}
\caption{Number of subdomain solves for Example 5.} \label{tab:t7}
\end{table}
\begin{figure}[ht!]
\centering
\begin{subfigure}[b]{0.19\textwidth}
\includegraphics[width=\textwidth]{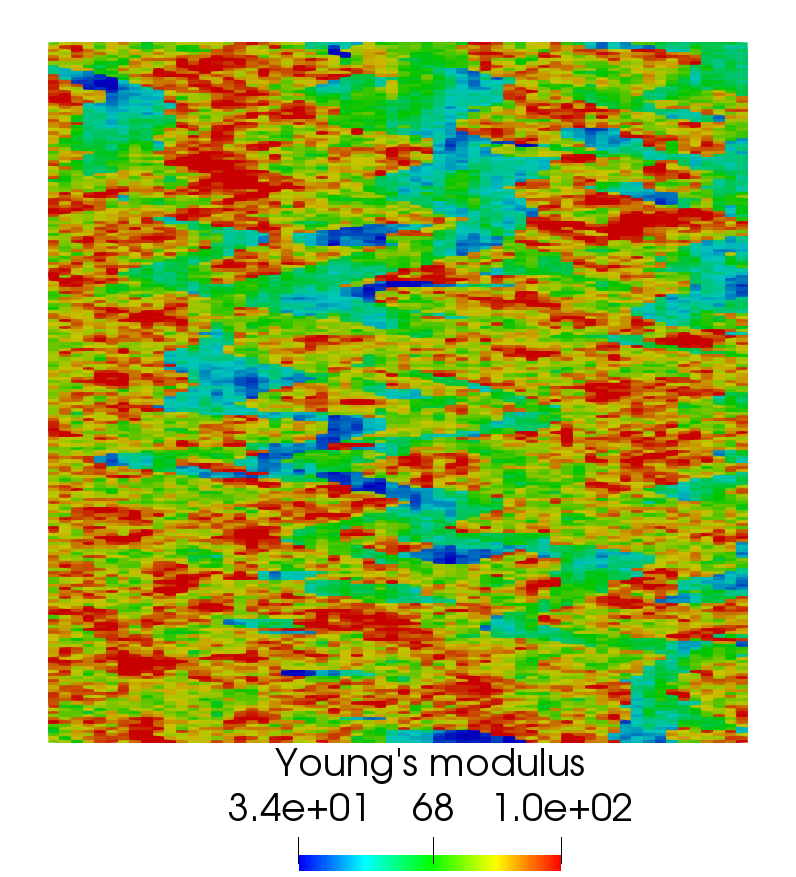}
\label{fig:4_1}
\end{subfigure}
\begin{subfigure}[b]{0.19\textwidth}
\includegraphics[width=\textwidth]{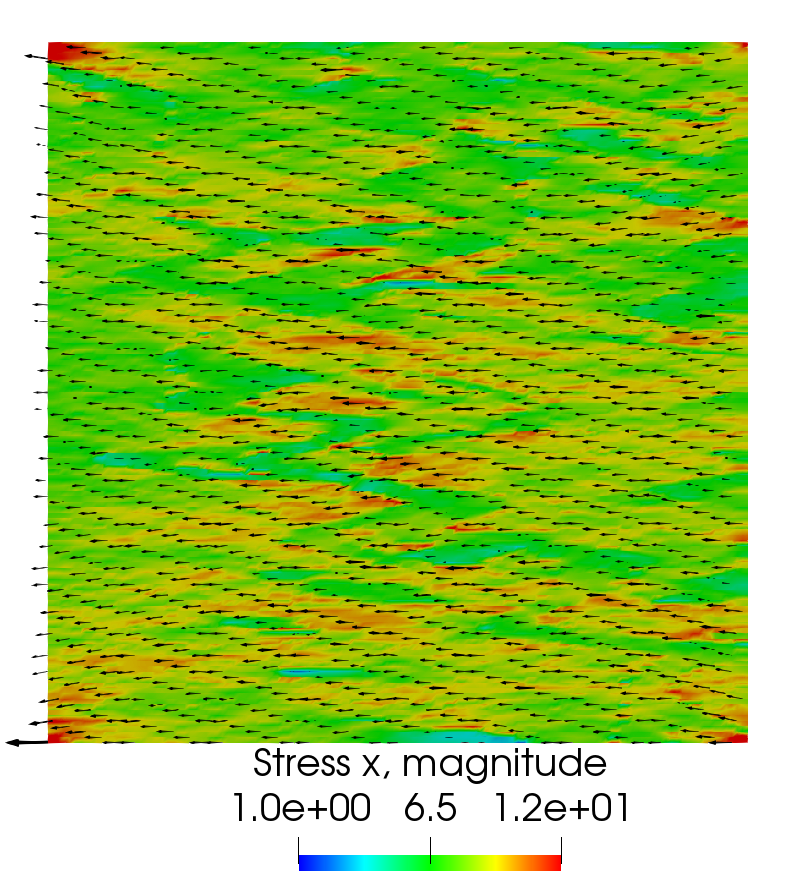}		
\label{fig:4_2}
\end{subfigure}
\begin{subfigure}[b]{0.19\textwidth}
\includegraphics[width=\textwidth]{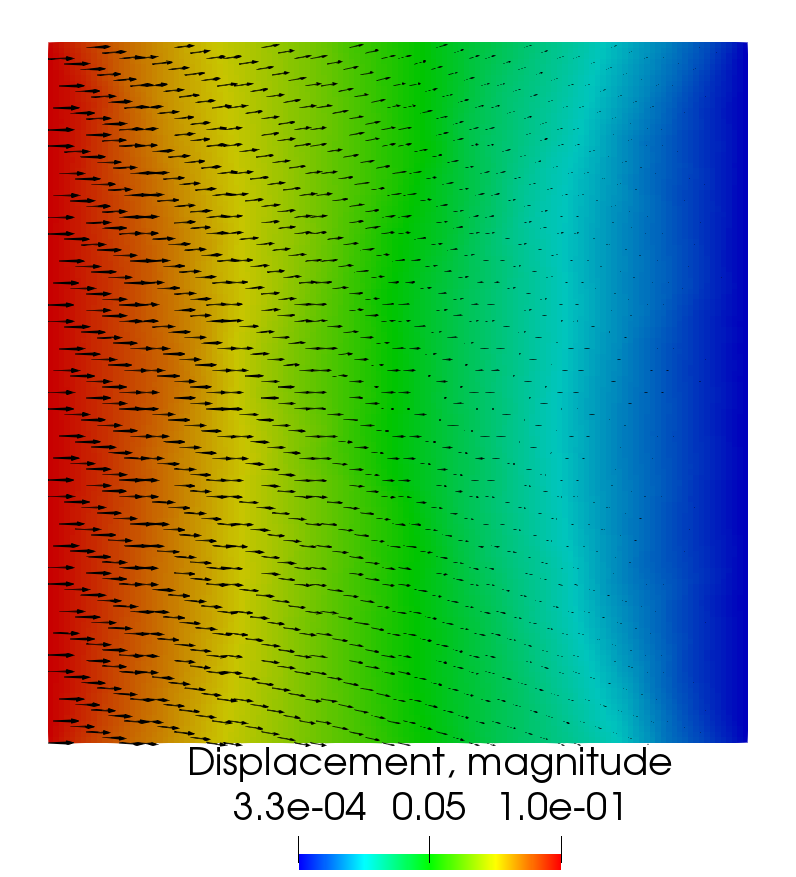}
\label{fig:4_4}
\end{subfigure}
\begin{subfigure}[b]{0.19\textwidth}
\includegraphics[width=\textwidth]{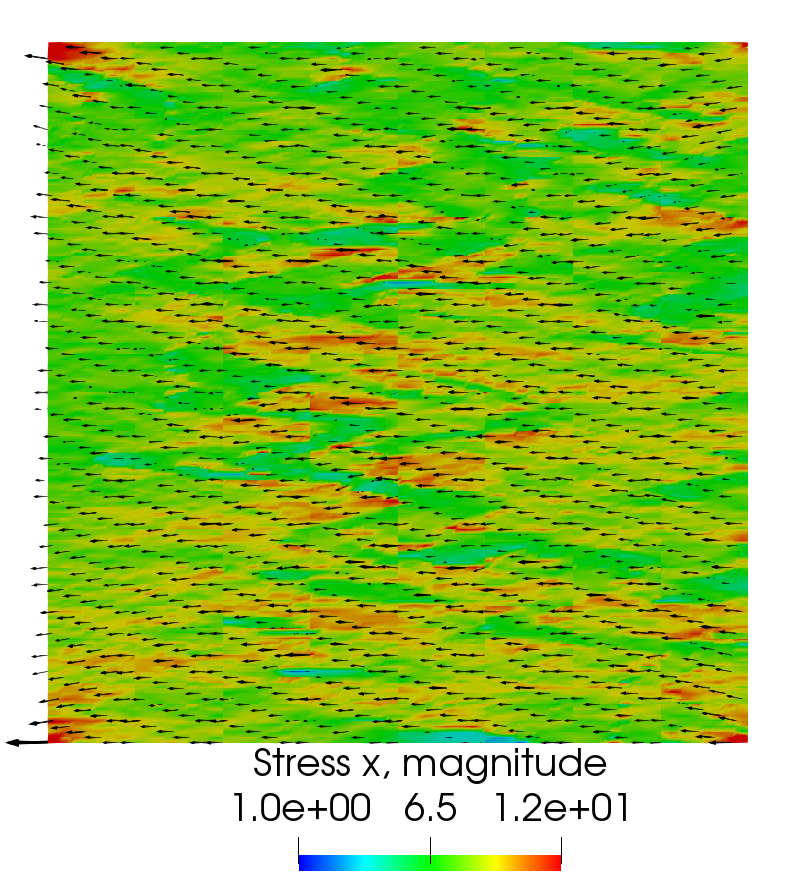}		
\label{fig:4_22}
\end{subfigure}
\begin{subfigure}[b]{0.19\textwidth}
\includegraphics[width=\textwidth]{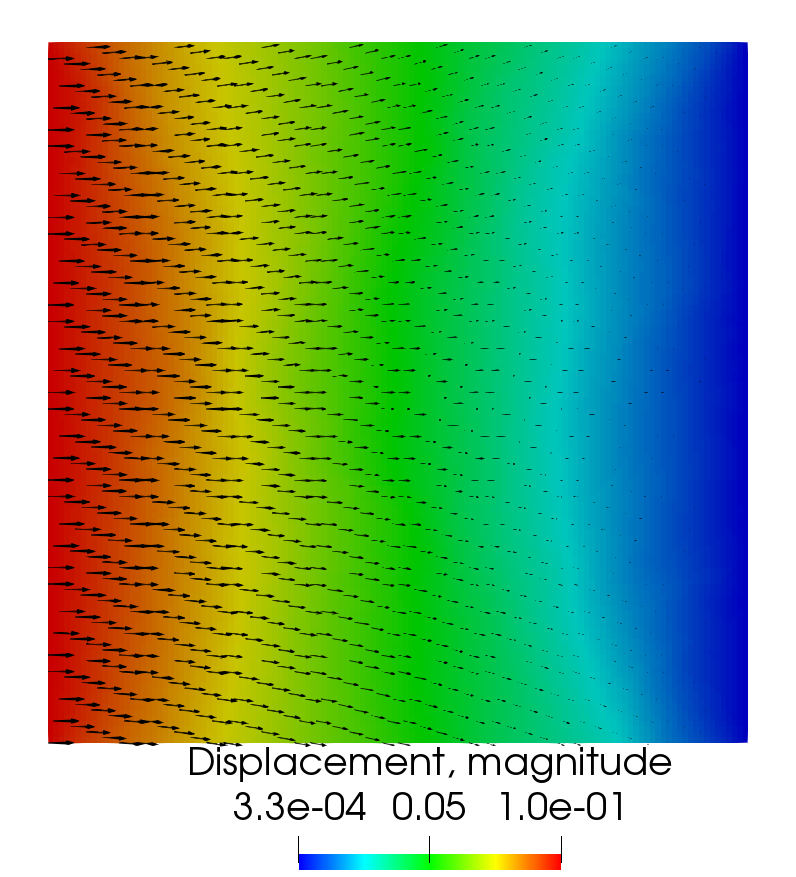}
\label{fig:4_42}
\end{subfigure}
\caption{Example 5, Young's modulus, fine scale stress and displacement,
and multiscale stress and displacement with cubic mortars, $H = 1/8$.}\label{fig:4}
\end{figure}

\bibliographystyle{siamplain}
\bibliography{elastdd}
\end{document}